\def\sqr#1#2{{\vcenter{\vbox{\hrule height.#2pt
				\hbox{\vrule width.#2pt height#1pt \kern#1pt \vrule width.#2pt}
				\hrule height.#2pt}}}}
\def\sqr#1#2{{\vcenter{\vbox{\hrule height.#2pt
              \hbox{\vrule width.#2pt height#1pt \kern#1pt \vrule width.#2pt}
              \hrule height.#2pt}}}}
\def\3n{\negthinspace \negthinspace \negthinspace }
\def\2n{\negthinspace \negthinspace }
\def\1n{\negthinspace }
\def\={\buildrel \triangle \over =}
\def\esssup{\mathop{\rm esssup}}
\def\max{\mathop{\rm max}}
\def\exp{\mathop{\rm exp}}
\def\sup{\mathop{\rm sup}}
\def\inf{\mathop{\rm inf}}
\def\({\Big (}
\def\){\Big )}
\def\[{\Big[}
\def\]{\Big]}
\def\be{\begin{equation}}
\def\ee{\end{equation}}
\def\square#1{\vbox{\hrule\hbox{\vrule height#1%
     \kern#1\vrule}\hrule}}
\def\rectangle#1#2{\vbox{\hrule\hbox{\vrule height#1%
     \kern#2\vrule}\hrule}}
\font\tenbb=msbm10 \font\sevenbb=msbm7 \font\fivebb=msbm5
\newtheorem{lemma}{Lemma}[section]
\newtheorem{remark}{Remark}[section]
\newtheorem{example}{Example}[section]
\newtheorem{theorem}{Theorem}[section]
\newtheorem{corollary}{Corollary}[section]
\newtheorem{definition}{Definition}[section]
\newtheorem{proposition}{Proposition}[section]
\begin{document}
\title{Derivative over Wasserstein spaces along curves of densities\footnotemark[1]}
 \author{Rainer Buckdahn$^{1,2}$,\,\, Juan Li$^{3,**}$,\,\, Hao Liang$^3$ \\
 {$^1$\small Laboratoire de Math\'{e}matiques de Bretagne Atlantique, Univ Brest,}\\
	{\small UMR CNRS 6205, 6 avenue Le Gorgeu, 29200 Brest, France.}\\
	{$^2$\small  School of Mathematics and Statistics, Shandong University,}
	{\small Jinan 250100, P. R. China.}\\
	{$^3$\small  School of Mathematics and Statistics, Shandong University, Weihai,}
	{\small Weihai 264209, P. R. China.}\\
 {\small{\it E-mails: rainer.buckdahn@univ-brest.fr,\,\ juanli@sdu.edu.cn,\,\ haoliang@mail.sdu.edu.cn.}}
 \date{October 04, 2020}}
\renewcommand{\thefootnote}{\fnsymbol{footnote}}
\footnotetext[1]{The work has been supported by the NSF of P.R. China (No. 12031009, 11871037), National Key R and D Program of China (NO. 2018YFA0703900), NSFC-RS (No. 11661130148; NA150344),  and also supported by the ``FMJH Program Gaspard Monge in optimization and operation research", and the ANR (Agence Nationale de la Recherche) project ANR-16-CE40-0015-01. $^{**}$ Corresponding author.}
\maketitle
\begin{abstract}
 In this paper, given any random variable $\xi$ defined over a probability space $(\Omega,\mathcal{F},Q)$, we focus on the study of the derivative of functions of the form $L\mapsto F_Q(L):=f\big((LQ)_{\xi}\big),$ defined over the convex cone of densities $L\in\mathcal{L}^Q:=\{ L\in L^1(\Omega,\mathcal{F},Q;\mathbb{R}_+):\ E^Q[L]=1\}$ in $L^1(\Omega,\mathcal{F},Q).$ Here $f$ is a function over the space $\mathcal{P}(\mathbb{R}^d)$ of probability laws over $\mathbb{R}^d$ endowed with its Borel $\sigma$-field $\mathcal{B}(\mathbb{R}^d)$.
The problem of the differentiability of functions $F_Q$ of the above form has its origin in the study of mean-field control problems for which the controlled dynamics admit only weak solutions. Inspired by P.-L. Lions' results \cite{Lions} we show that, if for given $L\in\mathcal{L}^Q$,  $L'\mapsto F_{LQ}(L'):\mathcal{L}^{LQ}\rightarrow\mathbb{R}$ is differentiable at $L'=1$, the derivative is of the form $g(\xi)$, where $g:\mathbb{R}^d\rightarrow\mathbb{R}$ is a Borel function which depends on $(Q,L,\xi)$ only through the law $(LQ)_\xi$. Denoting this derivative by $\partial_1F((LQ)_\xi,x):=g(x),\, x\in\mathbb{R}^d$, we study its properties, and we relate it to partial derivatives, recently investigated in \cite{BCL}, and, moreover, in the case when $f$ restricted to the 2-Wasserstein space $\mathcal{P}_2(\mathbb{R}^d)$ is differentiable in P.-L. Lions' sense and $(LQ)_{\xi}\in\mathcal{P}_2(\mathbb{R}^d)$, we investigate the relation between the derivative with respect to the density of $F_Q(L)=f\big((LQ)_{\xi}\big)$ and the derivative of $f$ with respect to the probability measure. Our main result here shows that $\partial_x\partial_1F((LQ)_\xi,x)=\partial_\mu f((LQ)_\xi,x),\ x\in \mathbb{R}^d,$ where $\partial_\mu f((LQ)_\xi,x)$ denotes the derivative of $f:\mathcal{P}_2(\mathbb{R}^d)\rightarrow \mathbb{R}$ at $(LQ)_\xi$.
By using techniques of Girsanov transformation in the context of Malliavin calculus (See \cite{B94}), this result will be proved first for smooth Wiener functionals $L$ and $\xi$, and will be then extended by subtle approximation arguments to the general case. Finally, the link with A. Bensoussan's approach in \cite{BFY15} based on functions of densities of random variables instead of their law is studied.
\end{abstract}

\noindent \textbf{Keywords.}
Derivative w.r.t. the density over probability spaces; derivative w.r.t. the measure over 2-Wasserstein spaces; partial derivative w.r.t. conditional probability laws; Girsanov transformation; integration by parts w.r.t. the Mallivin derivative.

\noindent \textbf{AMS Subject classification:} 60H10; 60K35

\section{Introduction}

Stimulated by the seminal works of Lasry and Lions \cite{LL07}, by Lions \cite{Lions} (Refer also to Cardaliaguet \cite{C12}) but also by Huang, Caines and Malham\'{e} \cite{HCM07} as well as by a large manifold of applications, in recent years the study of mean-field problems has been enjoying a great interest and attracting numerous researchers. Among the different approaches let us mention those which characterise the limit problem of a large number of coupled equations (dynamics of a game associated with different players, systems of backward stochastic differential equations or also systems of Hamilton-Jacobi-Bellman equations with coupling) as mean-field game or as the so-called Master equation.
Such limit problems were studied, for instance, in the pioneering works \cite{Lions} and \cite{HCM07} already cited above, but also in the works by Bensoussan, Frehse and Yam \cite{BFY15}, \cite{BFY17}, by Cardaliaguet \cite{C17}, and also in that by Buckdahn, Djehiche, Li and Peng \cite{BDLP09}. With the same motivation Li and Min \cite{LM16} studied weak solutions for stochastic mean-field equations. In his course at Coll\`{e}ge de France \cite{Lions} (for the written version please refer to \cite{C12}) P.-L. Lions introduced and studied the innovative notion of derivative over Wasserstein spaces. Strongly motivated by these works Buckdahn, Li, Peng and Rainer \cite{BLPR17} investigated this derivative for stochastic mean-field equations and associated with them mean-field PDEs. This notion of derivative over Wasserstein spaces has turned out to be crucial in the study of the stochastic maximum principle (SMP) for stochastic control problems with coefficients depending not only on the controlled state process and the control, but also on their law. There are numerous papers by different authors investigating the stochastic optimal control problems for controlled mean-field stochastic differential equations with strong solution, e.g., \cite{BCP18}, \cite{BDL11aD}, \cite{BLM16aD}, \cite{Li12} and \cite{PW18}, and also particular cases of mean-field stochastic control systems with only weak solution, e.g., \cite{ABC19} and \cite{BLM17}.
Bayraktar, Cosso and Pham \cite{BCP18} focused on control problems driven by McKean-Vlasov dynamics where the diffusion coefficient may be degenerate and they obtained a probabilistic representation of its value function. They provided with so-called randomization method to formulate a new weak control problem over a set of equivalent probability measures which can be shown to have the same value function as the original one, and derived the dynamic programming principle without assuming controls of feedback type. Relying on the notion of derivative over Wasserstein spaces mentioned above, Pham and Wei \cite{PW18} and Acciaio, Backhoff-Veraguas and Carmona \cite{ABC19} both considered the case of dynamics depending on the joint law of the state and the control. Pham and Wei \cite{PW18} studied the dynamic programming Bellman equation in the framework of feedback controls.  Acciaio, Backhoff-Veraguas and Carmona \cite{ABC19} proved suitable versions of the Pontryagin stochastic maximum principle.

Our original starting point for the present work was the study of the SMP for a rather general mean-field control system with weak solution, but this investigation leads to another fundamental problem, that of the study of the derivative of functions $F_Q:\mathcal{L}^Q\rightarrow \mathbb{R}$ defined over the space $
\mathcal{L}^Q:=\{L\in L^1(\Omega,\mathcal{F},Q;\mathbb{R}_+): E^Q[L]=1\}$ of densities on a probability space $(\Omega,\mathcal{F},Q)$, so that the present manuscript focuses on the study of the derivative of such functions of densities, while the study of a general mean-field control problem involving such derivatives is the object of a forthcoming work.

Let us explain this link between stochastic mean-field control problems admitting only a weak solution and the derivative of functions over $\mathcal{L}^Q$ with the help of a simple example.

\smallskip

Let $b:\mathbb{R}^2\rightarrow \mathbb{R}$ be a bounded Borel function. Given a measurable, non anticipating functional $u:[0,T]\times C([0,T];\mathbb{R})\rightarrow \mathbb{R}$, we consider over a suitable probability space $(\Omega,\mathcal{F},Q)$ the stochastic differential equation (SDE)
\begin{equation}\label{eq1a}
dX_t^{u}=b(X_t^{u},u_t)dt+dB_t,\ t\in[0,T],\ X_0^{u}=0,
\end{equation}
where $B=(B_t)_{t\in[0,T]}$ is a $Q$-Brownian motion and $u_t=u(t,X_{\cdot\wedge t}^{u}),\ t\in [0,T]$. Of course, such an SDE admits, in general, only a weak solution, which is obtained by putting $X^{u}:=B$, $L^{u}:=\exp\{\int_0^Tb(X^{u}_t,u_t)dB_t- \frac12\int_0^Tb(X^{u}_t,u_t)^2dt\}$ and $B_t^{u}=B_t-\int_0^tb(X_s^{u},u_s)ds,\, t\in[0,T].$ Indeed, due to the Girsanov Theorem, $B^{u}=(B^{u}_t)_{t\in[0,T]}$ is an $Q^{u}:=L^{u}Q$-Brownian motion and, for $\mathbb{F}$ denoting the filtration generated by $B$, $(\Omega,\mathcal{F},\mathbb{F},Q^{u},B^{u},X^{u})$ is a weak solution of (\ref{eq1a}),
\begin{equation}\label{eq1b}
	dX_t^{u}=b(X_t^{u},u_t)dt+dB_t^{u},\, t\in[0,T],\, X_0^{u}=0.
\end{equation}
\noindent We consider as set of admissible controls $u_t=u(t,B_{\cdot\wedge t}),\ t\in[0,T],$ the space $\mathcal{U}_{ad}:=L^\infty_{\mathbb{F}}([0,T]\times\Omega;dsdQ)$, and we endow the controlled equation (\ref{eq1b}) with a cost functional $J:\mathcal{U}_{ad}\rightarrow\mathbb{R}$. Let $u^*\in\mathcal{U}_{ad}$ be an optimal control, $J(u^*)\le J(u),\ u\in \mathcal{U}_{ad}$. Studying the stochastic maximum principle means to provide necessary and possibly sufficient optimality conditions for the control $u^*$. A way to do this consists in considering the linear perturbation $u^\varepsilon:=u^*+\varepsilon u, \varepsilon\in \mathbb{R},\ u\in\mathcal{U}_{ad},$ and to use, supposing differentiability, that $\partial_\varepsilon J(u^\varepsilon)\big|_{\varepsilon=0}=0$. In the classical case, when $J$ is a terminal cost functional of the form
$J(u):=\displaystyle E^{Q^u}[\varphi(X_T^{u})]=E^Q[L^{u}\varphi(B_T)],\ u\in\mathcal{U}_{ad},$ we have

\smallskip

\centerline{$\partial_\varepsilon J(u^\varepsilon)\big|_{\varepsilon=0}=\displaystyle E^Q[\varphi(B_T)\partial_\varepsilon L^{u^\varepsilon} \big|_{\varepsilon=0}],$}

\smallskip

\noindent and the study of the SMP is classical for this case. However, recently numerous authors have studied the SMP for stochastic mean-field control problems in which the coefficients do not depend only on the control state process and its control, but also on the law of the controlled state process. In our case this leads to a cost functional of the form $J(u)=f(Q^{u}_{B_T}), u\in\mathcal{U}_{ad},$ and studying the SMP

\smallskip

\centerline{$\partial_\varepsilon J(u^\varepsilon)\big|_{\varepsilon=0}=\partial_\varepsilon f(Q^{u^\varepsilon}_{B_T})\big|_{\varepsilon=0}=\partial_\varepsilon f((L^{u^\varepsilon}Q)_{B_T})\big|_{\varepsilon=0},$}

\smallskip

\noindent  is tightly related with the investigation of the derivative w.r.t. the density $L\in \mathcal{L}^Q$ of functions $F_Q:\mathcal{L}^Q\rightarrow\mathbb{R}$ of the form $F_Q(L):=f((LQ)_\xi),$ where $f$ is a real-valued function defined over the space $\mathcal{P}(\mathbb{R}^d)$ of probability laws over $(\mathbb{R}^d,\mathcal{B}(\mathbb{R}^d))$ and $\xi:(\Omega,\mathcal{F},Q)\rightarrow (\mathbb{R}^d,\mathcal{B}(\mathbb{R}^d))$ is an arbitrarily fixed random variable.

The objective of our manuscript is the study of the derivative of such functions  $L\rightarrow F_Q(L):=f((LQ)_\xi)$ w.r.t. the density. In order to understand better the setting in which these studies shall be done, let us fix for simplicity any deterministic function $h=(h_t)_{t\in[0,T]}\in L^2([0,T],dt)$ and consider the regular $\mathcal{L}^Q$-valued curve $\varepsilon\rightarrow L^\varepsilon=\exp\{\int_0^T\varepsilon h_sdB_s-\frac12\int_0^T(\varepsilon h_s)^2ds\},\, \varepsilon\in \mathbb{R}$. Let $\xi=\varphi(B_T)$, where $\varphi\in C^1_b(\mathbb{R})$. Then, due to the Girsanov Theorem, $(L^\varepsilon Q)_\xi=Q_{\varphi(B_T+\varepsilon\int_0^Th_tdt)},\ \varepsilon\in\mathbb{R}$, and for any function $f:\mathcal{P}_2(\mathbb{R})\rightarrow \mathbb{R}$ differentiable in the sense of P-L. Lions \cite{Lions} (See also \cite{BLPR17}) over the space $\mathcal{P}_2(\mathbb{R})$ of probability laws with finite second moment, with derivative $\partial_\mu f:\mathcal{P}_2(\mathbb{R})\times\mathbb{R}\rightarrow \mathbb{R}$ (See Subsection 3.3), we have
\begin{equation}\label{eq1d}
\begin{array}{lll}
& &\partial_\varepsilon F_Q(L^\varepsilon)\big|_{\varepsilon=0}=\displaystyle
\partial_\varepsilon f((L^\varepsilon Q)_\xi)\big|_{\varepsilon=0}
=\partial_\varepsilon f\big(Q_{\varphi(B_T+\varepsilon\int_0^Th_tdt)}\big) \big|_{\varepsilon=0}\\
&=&\displaystyle E^Q\big[\partial_\mu f(Q_{\varphi(B_T)},\varphi(B_T))\partial_x\varphi(B_T)\int_0^Th_tdt\big]
=\displaystyle E^Q\big[\int_0^{\varphi(B_T)}\partial_\mu f(Q_{\varphi(B_T)},y)dy\cdot \int_0^Th_tdB_t\big]\\
&=&\displaystyle E^Q\big[\int_0^{\xi}\partial_\mu f(Q_{\xi},y)dy\cdot \partial_\varepsilon L^\varepsilon\big|_{\varepsilon=0}\big],
\end{array}
\end{equation}
where the last equality but one is a consequence of the integration by parts formula in the Malliavin calculus. The above relation (\ref{eq1d}) extended to general curves in $\mathcal{L}^Q$ will allow to conclude (See Subsection 3.3) that the derivative $DF_Q(1)$ of $F_Q$ w.r.t. the measure at $L=1$ is given by

\smallskip

\centerline{$\displaystyle DF_Q(1)=\int_0^{\xi}\partial_\mu f(Q_{\xi},y)dy-E^Q\big[\int_0^{\xi}\partial_\mu f(Q_{\xi},y)dy\big],$}

\smallskip

\noindent  and that there is a measurable function $g:\mathbb{R}\rightarrow \mathbb{R}$ only depending on $\xi$ through $Q_\xi$, such that $DF_Q(1)=g(\xi),\ Q$-a.s. Denoting $g$ by $\partial_1F(Q_\xi,\cdot)$, we have
\begin{equation}\label{eq1aa}
\partial_1F(Q_\xi,x)=\int_0^{x}\partial_\mu f(Q_{\xi},y)dy-E^Q\big[\int_0^{\xi}\partial_\mu f(Q_{\xi},y)dy\big],\, x\in\mathbb{R},
\end{equation}
and in particular, we get that
\begin{equation}\label{eq1e}
\partial_x\big(\partial_1F(Q_\xi,x)\big)=\partial_\mu f(Q_{\xi},x),\, x\in \mathbb{R}.
\end{equation}
We will see that formula (\ref{eq1e}) holds true in a rather general setting, also for multi-dimensional random variables $\xi$. The interest of this formula stems from the fact that it allows to regard the derivative w.r.t. the density as a kind of first order derivative, and that w.r.t. the measure as a second order one.

\smallskip

The above example proves the interest of investigating the derivative of functions $L\rightarrow F_Q(L):=f((LQ)_\xi),\, L\in\mathcal{L}^Q,$ for arbitrarily fixed $\xi\in L^0(\Omega,\mathcal{F},Q;\mathbb{R}^d),$ and its tight relation with the derivative of $f:\mathcal{P}_2(\mathbb{R}^d)\rightarrow\mathbb{R}$ studied by Lions \cite{Lions}.

Let us combine the explanation of the organisation of the paper with that of our main results. After the preliminaries in Section 2 in which we recall some basic notions, and in particular that of the derivative of a function $f:\mathcal{P}_2(\mathbb{R}^d)\rightarrow\mathbb{R}$ w.r.t. the probability measure, Section 3 is devoted to the study of the derivative of functions $F_Q:\mathcal{L}^Q\rightarrow\mathbb{R}$ of the form $F_Q(L):=f((LQ)_\xi),\, L\in\mathcal{L}^Q,$ for arbitrarily fixed $\xi\in L^0(\Omega,\mathcal{F},Q;\mathbb{R}^d),$ and for functions $f$ defined over the space $\mathcal{P}(\mathbb{R}^d)$ of the probability measures over $(\mathbb{R}^d,\mathcal{B}(\mathbb{R}^d))$. If $F_{Q}$ is differentiable in $L$ (See Definition \ref{Def3.1}), its unique derivative $DF_Q(L)\in L(L^1_0(\Omega,\mathcal{F},Q),\mathbb{R})$(\footnote[1]{$L^p_0(\Omega, \mathcal{F},Q)$ is the sub-space of random variables $\eta$ in $L^p(\Omega,\mathcal{F},Q)$ with $E^Q[\eta]=0$ $(1\le p\le +\infty).$}) is identified with $DF_Q(L)\in  L^\infty_0(\Omega,\mathcal{F},Q)$. The choice of the special form of the function $F_Q(L):=f((LQ)_\xi)$ implies that $F_{Q}(L'L)=F_{LQ}(L'),\ L\in\mathcal{L}^Q,\ L'\in\mathcal{L}^{LQ}$, which allows to reduce our study of the derivative essentially to that of $F_{LQ}:\mathcal{L}^{LQ}\rightarrow \mathbb{R}$ at $L'=1$. In analogy to P.-L. Lions' result for the derivatives over $\mathcal{P}_2(\mathbb{R}^d)$, Theorem \ref{P3} states that there exists a Borel function $g:\mathbb{R}^d\rightarrow\mathbb{R}$ such that $DF_{LQ}(1)=g(\xi),\ Q$-a.s., and $g$ depends on $(Q,L,\xi)$ only through $(LQ)_\xi$, which motivates the notation $\partial_1F((LQ)_\xi,x):=g(x),\ x\in\mathbb{R}^d.$
Further properties of $\partial_1F$ are studied in Subsection 3.1 and an illustrating example is given. Subsection 3.2 is devoted to the investigation of the link between the derivative of $L\rightarrow F_{Q}(L)$ and the partial derivative of $L\rightarrow G(Q_{(L,\xi)}):=f((LQ)_\xi)$. Given any measurable space $(E,\mathcal{E})$, partial derivatives w.r.t. the conditional law $Q_{L|\xi}$ at $Q_{(L,\xi)}$ of functions
$Q_{(L,\xi)}\rightarrow H(Q_{(L,\xi)}):\mathcal{P}_{2,0}(\mathbb{R}^d\times E)\rightarrow \mathbb{R}$ (\footnote[2]{ $\mathcal{P}_{2,0}(\mathbb{R}^d\times E)=\{\mu\in\mathcal{P}(\mathbb{R}^d\times E):\ \int_{\mathbb{R}^d}|x|^2\mu(dxde)<+\infty\}$.}) were recently studied in \cite{BCL}, and the relation $(LQ)_\xi=\int_{E}eQ_{(L,\xi)}(dedx)$ allows to re-interpret the results of Subsection 3.1 in terms of the notion of partial derivative. Also here an illustrating example is given. Finally, Subsection 3.3 is devoted to the study of the derivative of $F_Q(L)=f((LQ)_\xi)$, when $\xi\in L^2(\Omega,\mathcal{F},Q;\mathbb{R}^d)$ and $f:\mathcal{P}_2(\mathbb{R}^d)\rightarrow\mathbb{R}$ is differentiable in P.-L. Lions' sense. In Theorem 3.2, generalising (\ref{eq1aa}), we give for this general case the explicit form of the derivative $\partial_1f((LQ)_\xi,x)$, when $\xi$ is a real-valued random variable, and for $\xi$ an $\mathbb{R}^d$-valued random variable ($d\ge 1$) we show that we have $\partial_x\partial_1F((LQ)_\xi,x)=\partial_\mu f((LQ)_\xi,x),\, x\in\mathbb{R}^d$. Using techniques of Girsanov transformation in the frame of the Malliavin calculus (See \cite{B94}), we prove the results first in the case, where $\xi$ and $L$ are regular in the sense of the Malliavin derivative, and then, by a suitable, rather subtle approximation (See Proposition \ref{lem2}) we extend these results to the general case. The proof of Proposition \ref{lem2} is given in the Appendix (Section 4). Finally, at the end of Subsection 3.3 we relate the mean-field approach based on densities of random variables by Bensoussan (See, e.g., \cite{BFY15}) to the results of our manuscript. Considering a sufficiently regular function $\Phi$ defined over the space of probability densities in $L^1(\mathbb{R},\mathcal{B}(\mathbb{R}),dx)$ and defining $F_Q(L):=f((LQ)_\xi):=\Phi(f_\xi^{LQ}),$ for a random variable $\xi\in L^2(\Omega,\mathcal{F},Q)$ and  $L\in\mathcal{L}^Q$ such that $(LQ)_\xi$ has a square integrable density denoted by $f_\xi^{LQ}$, we show that, under suitable assumptions, $(LQ)_\xi(dx)$-a.s.,

\smallskip

\centerline{$\begin{array}{lll}
\partial_1F((LQ)_\xi,x)&=&D\Phi(f_\xi^{LQ},x)-E^{LQ} [D\Phi(f_\xi^{LQ},\xi)],\\
\partial_\mu f((LQ)_\xi,x)&=&\partial_x\partial_1F((LQ)_\xi,x)=\partial_x D\Phi(f_\xi^{LQ},x).	
\end{array}$}

\noindent Here $D\Phi(f_\xi^{LQ})(\cdot)\in L(L^2_0(\mathbb{R},\mathcal{B}(\mathbb{R}),dx),\mathbb{R})$ denotes the $L^2(dx)$-derivative of $\Phi$ and is identified with $D\Phi(f_\xi^{LQ},\cdot)\in L^2_0(\mathbb{R},\mathcal{B}(\mathbb{R}),dx)$.

\section{Preliminaries}

Let us assume that $(E,d)$ is a separable complete metric space; $\mathcal{B}(E)$ denotes the Borel $\sigma$-field over $(E,d)$, and $\mathcal{P}(E)$ is the space of all probability measures endowed with the topology of weak convergence. For $p\geq 1$, we consider the space of probability measures on $(E,\mathcal{B}(E))$ with finite $p$-th moment, denoted by $\mathcal{P}_p(E)$,
\begin{equation}
\mathcal{P}_p(E):=\Big\{\mu\in\mathcal{P}(E)\Big|\int_E d(z,z_0)^p\mu(dz)<+\infty, \mbox{ for some } z_0\in E \Big\},
\end{equation}
or equivalent,
\begin{equation}
\mathcal{P}_p(E):=\Big\{\mu\in\mathcal{P}(E)\Big|\int_E d(z,z_0)^p\mu(dz)<+\infty, \mbox{ for all } z_0\in E \Big\}.
\end{equation}
The space $\mathcal{P}_p(E)$ is endowed with the $p$-Wasserstein metric
\begin{equation*}
W_{p}(\mu,\nu):=\inf\Big\{\left(\int_{E\times E}(d(z,z'))^p\rho(dzdz')\right)^{\frac{1}{p}}\Big|\ \rho\in\mathcal{P}_p(E\times E)\mbox{ with }
                      \rho(\cdot\times E)=\mu,\ \rho(E\times\cdot)=\nu\Big\},
\end{equation*}
where $\mu,\nu\in\mathcal{P}_p(E)$.

Let $(\Omega,\mathcal{F},P)$ be a complete probability space which is ``rich enough'' in the sense that, for all $k\ge 1$, $\mathcal{P}(E^k)=\{P_\vartheta,\  \vartheta\in L^0(\Omega,\mathcal{F},P;E^k)\}$, where $L^0(\Omega,\mathcal{F},P;E^k)$ (resp., $L^p(\Omega,\mathcal{F},P;E^k)$) denotes the space of all $E^k$-valued random variables over $(\Omega,\mathcal{F},P)$ (resp., that of all $E^k$-valued random variables over $(\Omega,\mathcal{F},P)$ with finite $p$-th moment). Then, obviously, for $\mu,\nu\in\mathcal{P}_p(E^k)$, the $p$-Wasserstein metric can be rewritten as follows:
\begin{equation*}
W_{p}(\mu,\nu):=\inf\big\{\big(E[|\xi-\eta|^p]\big)^{\frac{1}{p}}\big|\ \xi,\ \eta\in L^p(\Omega,\mathcal{F},P;E^k) \mbox{ with } P_\xi=\mu,\ P_\eta=\nu\big\}.
\end{equation*}

Let now $p=2$ and $E=\mathbb{R}^d\, (d\ge 1)$;
with the identification of the random variables which coincide $P$-a.s.  $L^{2}(\Omega,\mathcal{F},P;\mathbb{R}^d)$ is a real Hilbert space with inner product $(\xi,\eta)_{L^{2}(P)}=E[\xi\cdot\eta],\ \xi,\eta\in L^{2}(\Omega,\mathcal{F},P;\mathbb{R}^d)$, and norm $|\xi|_{L^{2}(P)}=(\xi,\xi)_{L^{2}(P)}^{\frac{1}{2}}$. We say that a function $f:\mathcal{P}_{2}(\mathbb{R}^d)\rightarrow\mathbb{R}$ is differentiable at $\mu\in\mathcal{P}_{2}(\mathbb{R}^d)$ if, for the lifted function $\widetilde{f}(\vartheta):=f(P_{\vartheta}),\ \vartheta\in L^{2}(\Omega,\mathcal{F},P;\mathbb{R}^d)$,
there is some $\vartheta_{0}\in L^{2}(\Omega,\mathcal{F},P;\mathbb{R}^d)$ with $P_{\vartheta_{0}}=\mu$, such that the function $\widetilde{f}:L^{2}(\Omega,\mathcal{F},P;\mathbb{R}^d)\rightarrow\mathbb{R}$ is Fr\'{e}chet differentiable in $\vartheta_{0}$,
i.e., there exists a continuous linear mapping $D\widetilde{f}(\vartheta_{0}):L^{2}(\Omega,\mathcal{F},P;\mathbb{R}^d) \rightarrow \mathbb{R}$ (i.e., $D\widetilde{f}(\vartheta_{0})\in L(L^{2}(\Omega,\mathcal{F},P;\mathbb{R}^d);\mathbb{R})$) such that
\begin{equation}
\widetilde{f}(\vartheta_{0}+\eta)-\widetilde{f}(\vartheta_{0})=D\widetilde{f}(\vartheta_{0})(\eta)+o(|\eta|_{L^{2}(P)}),
\end{equation}
with $|\eta|_{L^{2}(P)}\rightarrow0$, for $\eta\in L^{2}(\Omega,\mathcal{F},P;\mathbb{R}^d)$. Since $D\widetilde{f}(\vartheta_{0})\in L(L^{2}(\Omega,\mathcal{F},P;\mathbb{R}^d);\mathbb{R})$,
Riesz' Representation Theorem yields the existence of a ($P$-a.s.) unique random variable $\theta_{0}\in L^{2}(\Omega,\mathcal{F},P;\mathbb{R}^d)$ such that $D\widetilde{f}(\vartheta_{0})(\eta)=(\theta_{0},\eta)_{L^{2}(P)}=E[\theta_{0}\eta]$,
for all $\eta\in L^{2}(\Omega,\mathcal{F},P;\mathbb{R}^d)$. In \cite{C12} it was proven that there is a Borel function $h_{0}:\mathbb{R}^d\rightarrow\mathbb{R}^d$ such that $\theta_{0}=h_{0}(\vartheta_{0})$, $P$-a.s. The function $h_0$ was shown to depend on $\vartheta_0$ only through its law $P_{\vartheta_0}$.
Taking into account the definition of $\widetilde{f}$, this allows to write
\begin{equation}\label{equ2.4}
f(P_{\vartheta})-f(P_{\vartheta_{0}})=E[h_{0}(\vartheta_{0})\cdot(\vartheta-\vartheta_{0})]+o(|\vartheta-\vartheta_{0}|_{L^{2}(P)}),
\end{equation}
for $\vartheta\in L^{2}(\Omega,\mathcal{F},P;\mathbb{R}^d)$ with $|\vartheta-\vartheta_{0}|_{L^{2}(P)}\rightarrow 0 $.

We call $\partial_{\mu}f(P_{\vartheta_{0}},y):=h_{0}(y),\ y\in\mathbb{R}^d$, the derivative of $f:\mathcal{P}_{2}(\mathbb{R}^d)\rightarrow\mathbb{R}$ at $P_{\vartheta_{0}}$.

For simplicity in our approach we will consider the function $\widetilde{f}:L^{2}(\Omega,\mathcal{F},P;\mathbb{R}^d)\rightarrow\mathbb{R}$ as Fr\'{e}chet differentiable over the whole space $L^{2}(\Omega,\mathcal{F},P;\mathbb{R}^d)$. In this case we have the derivative $\partial_{\mu}f(P_{\vartheta},y)$ defined $P_{\vartheta}(dy)$-a.e., for all $\vartheta\in L^{2}(\Omega,\mathcal{F},P;\mathbb{R}^d)$
through the relation $D\widetilde{f}(\vartheta)(\eta)=E[\partial_{\mu}f(P_{\vartheta},\vartheta)\eta]$, for all $\eta\in L^{2}(\Omega,\mathcal{F},P;\mathbb{R}^d)$.

As we have seen, in the above approach the definition of the derivative of a function $f:\mathcal{P}_2(\mathbb{R}^d)\rightarrow\mathbb{R}$ defined through the Fr\'{e}chet derivative of its lifted function $\widetilde{f}:L^2(\Omega,\mathcal{F},P;\mathbb{R}^d)\rightarrow \mathbb{R}$ is, although independent of the special choice of $P$, heavily depending on the fact that the same probability $P$ is used while working with the derivative of $f$. However, as we will see, working with weak solutions of a stochastic controlled problem will lead us to derivatives of functions of the form $\theta\rightarrow f(\mu^\theta)$, where $\mu^\theta=\big(L^{u^\theta}P\big)_{X_t^{u^\theta}}$. This makes necessary to study in addition to the Fr\'echet derivative of $L^2(\Omega,\mathcal{F},LP;\mathbb{R}^d)\ni\xi\rightarrow f\big((LP)_\xi\big)$ also that w.r.t. the density $L\rightarrow f\big((LP)_\xi\big)$.

\section{Derivative with respect to the density}
\subsection{Derivative w.r.t. the density and properties}

Let $(\Omega,\mathcal{B}(\Omega))$ be a Radon space, and $Q$ be a probability measure on $(\Omega,\mathcal{B}(\Omega))$. Putting $\mathcal{F}=\mathcal{B}(\Omega)\vee\mathcal{N}_Q$, where $\mathcal{N}_Q$ denotes the collection of all $Q$-null sets, $(\Omega,\mathcal{F},Q)$ is a complete probability space. For $p\in[1,\infty]$ we define the spaces $L^p_0(\Omega,\mathcal{F},Q):=\{\eta\in L^p(\Omega,\mathcal{F},Q)\,|\, E^Q[\eta]=0\}$ and
$\mathcal{L}^Q:=\{L\in L^1(\Omega,\mathcal{F},Q)\,|\, L>0,\, E^Q[L]=1\}$. We also introduce the notation $\mathcal{L}^Q-\mathcal{L}^Q:= \{L-L'\,|\,L,L'\in\mathcal{L}^Q\}$. Then we have the following lemma.

\begin{lemma}\label{Lemma1}
{\rm (i)} $L_0^1(\Omega,\mathcal{F},Q)=\{\alpha\eta\, |\, \alpha>0,\,\eta\in\mathcal{L}^Q-\mathcal{L}^Q\}$;\\
{\rm (ii)} For all $\varphi\in L(L_0^1(\Omega,\mathcal{F},Q);\mathbb{R})$, there exists some ($Q$-a.s.) unique $\vartheta_\varphi\in L^\infty_0(\Omega,\mathcal{F},Q)$ such that
$$ \varphi(\eta)=E^Q[\vartheta_\varphi \eta],\, \,  \eta\in L_0^1(\Omega,\mathcal{F},Q). $$
\end{lemma}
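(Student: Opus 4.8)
The plan is to treat the two parts separately; neither needs machinery beyond elementary measure theory together with the duality $(L^1)^*=L^\infty$ for finite measures.

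For part (i), the inclusion ``$\supseteq$'' is immediate: if $L,L'\in\mathcal{L}^Q$ and $\alpha>0$, then $\alpha(L-L')\in L^1(\Omega,\mathcal{F},Q)$ and $E^Q[\alpha(L-L')]=\alpha(1-1)=0$, so $\alpha(L-L')\in L^1_0(\Omega,\mathcal{F},Q)$. For ``$\subseteq$'' I would fix $\eta\in L^1_0(\Omega,\mathcal{F},Q)$, write $\eta=\eta^+-\eta^-$, and observe that $E^Q[\eta]=0$ forces $E^Q[\eta^+]=E^Q[\eta^-]=:c\ge 0$. If $c=0$ then $\eta=0$ $Q$-a.s.\ and $\eta=1\cdot(1-1)$ suffices. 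If $c>0$, pick any $\alpha>c$ and set $L:=\tfrac1\alpha\eta^++(1-\tfrac c\alpha)$ and $L':=\tfrac1\alpha\eta^-+(1-\tfrac c\alpha)$. Since $1-\tfrac c\alpha>0$ and $\eta^\pm\ge 0$, both $L,L'$ are (pointwise) strictly positive; they lie in $L^1$ because $\eta^\pm\le|\eta|\in L^1$; and $E^Q[L]=\tfrac c\alpha+(1-\tfrac c\alpha)=1=E^Q[L']$. Hence $L,L'\in\mathcal{L}^Q$ and $\alpha(L-L')=\eta^+-\eta^-=\eta$.

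For part (ii), I would first note that $L^1_0(\Omega,\mathcal{F},Q)$ is a closed subspace of the Banach space $L^1(\Omega,\mathcal{F},Q)$, being the kernel of the bounded linear functional $\eta\mapsto E^Q[\eta]$. Given $\varphi\in L(L^1_0(\Omega,\mathcal{F},Q);\mathbb{R})$, the Hahn--Banach theorem provides a (bounded) extension $\widetilde\varphi\in L(L^1(\Omega,\mathcal{F},Q);\mathbb{R})$. Since $Q$ is a probability, hence $\sigma$-finite, measure, the Riesz representation of the dual of $L^1$ yields $\psi\in L^\infty(\Omega,\mathcal{F},Q)$ with $\widetilde\varphi(\zeta)=E^Q[\psi\zeta]$ for all $\zeta\in L^1(\Omega,\mathcal{F},Q)$. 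Putting $\vartheta_\varphi:=\psi-E^Q[\psi]\in L^\infty_0(\Omega,\mathcal{F},Q)$, we get for every $\eta\in L^1_0(\Omega,\mathcal{F},Q)$ that $E^Q[\vartheta_\varphi\eta]=E^Q[\psi\eta]-E^Q[\psi]\,E^Q[\eta]=E^Q[\psi\eta]=\varphi(\eta)$, as required. For uniqueness, if $\vartheta,\vartheta'\in L^\infty_0$ both represent $\varphi$, then $\vartheta-\vartheta'\in L^\infty_0\subseteq L^1_0$, so testing $E^Q[(\vartheta-\vartheta')\eta]=0$ against the admissible choice $\eta=\vartheta-\vartheta'$ gives $E^Q[(\vartheta-\vartheta')^2]=0$, whence $\vartheta=\vartheta'$ $Q$-a.s.

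I do not expect a genuine obstacle here. The only points needing a little care are the strict positivity constraint in (i) — which is precisely why one must take $\alpha$ strictly larger than $c=E^Q[\eta^+]$ rather than $\alpha=c$, so that the added constant $1-c/\alpha$ stays positive — and, in (ii), the centering $\psi\mapsto\psi-E^Q[\psi]$ that is needed to land the representative in $L^\infty_0$; this modification changes nothing on $L^1_0$ exactly because the test functions have zero mean.
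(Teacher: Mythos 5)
Your proof is correct and follows essentially the same route as the paper: the positive/negative-part decomposition with a renormalisation in part (i) (the paper's choice $L=\frac{1+\eta^+}{1+E^Q[\eta^+]}$ is exactly your construction with $\alpha=1+E^Q[\eta^+]$), and extension of $\varphi$ to $L^1$ followed by $(L^1)^*=L^\infty$ duality and centering in part (ii). The only cosmetic difference is that the paper extends $\varphi$ explicitly by the formula $\widetilde\varphi(\eta):=\varphi(\eta-E^Q[\eta])$ instead of invoking Hahn--Banach, which changes nothing of substance.
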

\begin{proof}
(i) As the inclusion ``$\supset$'' is obvious, we only need to prove  ``$\subset$''. Decomposing $\theta\in L_0^1(\Omega,\mathcal{F},Q)$ in its positive and its negative parts, we have $\theta=\theta^+-\theta^-$. Then, $E^Q[\theta^+]-E^Q[\theta^-]=E^Q[\theta]=0$. Thus, putting $L:=\frac{1+\theta^+}{1+E^Q[\theta^+]}$ and $L':=\frac{1+\theta^-}{1+E^Q[\theta^-]}$, we see that $L,\ L'\in\mathcal{L}^Q$, and for $\alpha=1+E^Q[\theta^+]>0$, we have $\theta=\alpha(L-L')$.\\
(ii) It is well-known that, for all $\psi\in L(L^1(\Omega,\mathcal{F},Q);\mathbb{R})$, there exists a ($Q$-a.s.) unique $\theta_\psi\in L^\infty(\Omega,\mathcal{F},Q)$ such that $\psi(\eta)=E^Q[\theta_\psi \eta],$ for all $\eta\in L^1(\Omega,\mathcal{F},Q)$. Let now $\varphi\in L(L_0^1(\Omega,\mathcal{F},Q);\mathbb{R})$. We extend $\varphi$ to $L^1(\Omega,\mathcal{F},Q)$ by putting
$ \widetilde{\varphi}(\eta):=\varphi(\eta-E^Q[\eta]),\ \eta\in L^1(\Omega,\mathcal{F},Q)$. Obviously, $\widetilde{\varphi}\in L(L^1(\Omega,\mathcal{F},Q);\mathbb{R})$. Thus, there exists $\theta_{\widetilde{\varphi}}\in L^\infty(\Omega,\mathcal{F},Q)$ such that $\widetilde{\varphi}(\eta)=E^Q[\theta_{\widetilde{\varphi}}\eta],$ for all $\eta\in L^1(\Omega,\mathcal{F},Q)$. Hence, for all $\eta\in L^1_0(\Omega,\mathcal{F},Q)$, it holds that
$$ \varphi(\eta)=\widetilde{\varphi}(\eta)=E^Q[\theta_{\widetilde{\varphi}}\eta]=E^Q[(\theta_{\widetilde{\varphi}}-E^Q[\theta_{\widetilde{\varphi}}])\eta] .$$
Obviously, $\vartheta_\varphi:=\theta_{\widetilde{\varphi}}-E^Q[\theta_{\widetilde{\varphi}}]\in L^\infty_0(\Omega,\mathcal{F},Q)$ and $\widetilde{\varphi}(\eta)=E^Q[\vartheta_{\varphi}\eta],$ for all $\eta\in L^1_0(\Omega,\mathcal{F},Q)$. The ($Q$-a.s.) uniqueness of $\vartheta_\varphi$ follows from a standard argument, similar to that of $\theta_{\widetilde{\varphi}}$.
\end{proof}

Let now $\xi\in L^0(\Omega,\mathcal{F},Q;\mathbb{R}^d)$ be arbitrarily given. For $L\in\mathcal{L}^Q$, $(LQ)_\xi\in\mathcal{P}(\mathbb{R}^d)$ is defined by
$$ \int_{\mathbb{R}^d} \varphi d(LQ)_\xi:=E^Q[\varphi(\xi)L],\ \varphi\in b\mathcal{B}(\mathbb{R}^d), $$
where $b\mathcal{B}(\mathbb{R}^d):=\{\varphi:\mathbb{R}^d\rightarrow\mathbb{R}\,|\, \varphi \mbox{ bounded Borel function}\}$. Let us fix now an arbitrary function $f:\mathcal{P}(\mathbb{R}^d)\rightarrow\mathbb{R}$, and put
\begin{equation}\label{FQL}
F_Q(L):=f\big((LQ)_\xi\big),\ L\in\mathcal{L}^Q.
\end{equation}
\begin{remark}
If $\xi\in L^\infty(\Omega,\mathcal{F},Q;\mathbb{R}^d)$, the function $F_Q(L)=f\big((LQ)_\xi\big),\ L\in\mathcal{L}^Q,$ is well-defined for $f:\mathcal{P}_1(\mathbb{R}^d)\rightarrow\mathbb{R}$. Indeed, for $L\in\mathcal{L}^Q,$ $\displaystyle \int_{\mathbb{R}^d}|x|d(LQ)_\xi=E^Q[L\cdot|\xi|]\leq |\xi|_{\infty}<+\infty, $ where $|\xi|_{\infty}:=\esssup\limits_{\Omega}|\xi(\omega)|$.
\end{remark}

In the sense of the Fr\'{e}chet derivative over Banach spaces, but now for $\mathcal{L}^Q\subset L^1(\Omega,\mathcal{F},Q)$, we give the following definition.

\begin{definition}\label{Def3.1}
Given $L\in\mathcal{L}^Q$, we say that $F_Q:\mathcal{L}^Q\rightarrow\mathbb{R}$ defined in (\ref{FQL}) is differentiable at $L$, if there is some $(DF_Q)(L)\in L(L_0^1(\Omega,\mathcal{F},Q);\mathbb{R})$ such that
\begin{equation}\label{DFQL}
F_Q(L')-F_Q(L)=(DF_Q)(L)(L'-L)+o(|L'-L|_{L^1(Q)}),
\end{equation}
for all $L'\in\mathcal{L}^Q$ with $|L'-L|_{L^1(Q)}\rightarrow0$.
\end{definition}

We observe that this definition is well-stated. Indeed, we have the following statement:

\begin{lemma}\label{Lemma3.2}
For any given $L\in\mathcal{L}^Q$, we suppose that the function $F_Q:\mathcal{L}^Q\rightarrow\mathbb{R}$ defined in (\ref{FQL}) is differentiable at $L$ in the sense of the above Definition \ref{Def3.1}. Then the continuous linear functional $(DF_Q)(L)\in L(L_0^1(\Omega,\mathcal{F},Q);\mathbb{R})$ satisfying (\ref{DFQL}) is unique.
\end{lemma}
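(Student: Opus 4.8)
The plan is to run the standard uniqueness argument for a Fréchet-type derivative, with one adjustment forced by the fact that $\mathcal{L}^Q$ is only a convex set, not a linear space. Suppose $\varphi_1,\varphi_2\in L(L_0^1(\Omega,\mathcal{F},Q);\mathbb{R})$ both satisfy (\ref{DFQL}) at the given $L\in\mathcal{L}^Q$. Subtracting the two instances of (\ref{DFQL}) for a common $L'\in\mathcal{L}^Q$, the two leading terms $F_Q(L')-F_Q(L)$ cancel, and the remainders add, so the functional $\psi:=\varphi_1-\varphi_2\in L(L_0^1(\Omega,\mathcal{F},Q);\mathbb{R})$ satisfies $\psi(L'-L)=o(|L'-L|_{L^1(Q)})$ for all $L'\in\mathcal{L}^Q$ with $|L'-L|_{L^1(Q)}\to0$. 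The goal is to deduce $\psi\equiv0$ on $L_0^1(\Omega,\mathcal{F},Q)$.

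The first step is to test this estimate along curves that stay inside $\mathcal{L}^Q$. Here the only subtlety arises: a straight linear perturbation $L+t\eta$ need not be a density (positivity can fail on small sets where $L$ is small), so I would instead use convex combinations. Fix any $L_1\in\mathcal{L}^Q$ and set $L_t':=(1-t)L+tL_1$ for $t\in(0,1)$; since $\mathcal{L}^Q$ is convex, $L_t'\in\mathcal{L}^Q$, and $L_t'-L=t(L_1-L)$, so $|L_t'-L|_{L^1(Q)}=t\,|L_1-L|_{L^1(Q)}\to0$ as $t\downarrow0$. Applying the estimate and using linearity of $\psi$ gives $t\,\psi(L_1-L)=o(t\,|L_1-L|_{L^1(Q)})=o(t)$ as $t\downarrow0$; dividing by $t$ and letting $t\downarrow0$ yields $\psi(L_1-L)=0$ for every $L_1\in\mathcal{L}^Q$.

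The second step propagates this to all of $L_0^1(\Omega,\mathcal{F},Q)$ via Lemma \ref{Lemma1}(i). For an arbitrary $\eta\in\mathcal{L}^Q-\mathcal{L}^Q$, write $\eta=L_1-L_2$ with $L_1,L_2\in\mathcal{L}^Q$; then $\eta=(L_1-L)-(L_2-L)$ and linearity of $\psi$ together with the previous step give $\psi(\eta)=\psi(L_1-L)-\psi(L_2-L)=0$. By Lemma \ref{Lemma1}(i), every $\theta\in L_0^1(\Omega,\mathcal{F},Q)$ can be written as $\theta=\alpha\eta$ with $\alpha>0$ and $\eta\in\mathcal{L}^Q-\mathcal{L}^Q$, whence $\psi(\theta)=\alpha\,\psi(\eta)=0$. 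Therefore $\psi=0$, i.e. $\varphi_1=\varphi_2$, which is the claimed uniqueness of $(DF_Q)(L)$.

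I do not expect a genuine obstacle in this argument; the one point that must be handled with care is precisely the positivity constraint defining $\mathcal{L}^Q$, which rules out arbitrary linear test directions and is the reason Lemma \ref{Lemma1}(i) — expressing the linear span of densities through their differences — is invoked instead of a naive directional-derivative computation.
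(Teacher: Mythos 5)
Your proof is correct. It differs from the paper's argument in its implementation, though it shares the underlying idea of testing the remainder estimate along admissible curves inside the convex set $\mathcal{L}^Q$. The paper first invokes Lemma \ref{Lemma1}(ii) to represent the difference $\varphi=\varphi_1-\varphi_2$ by a random variable $\vartheta_\varphi\in L^\infty_0(\Omega,\mathcal{F},Q)$, then for an arbitrary $\theta\in L^1_0$ builds the two normalized perturbations $L_1'=\frac{L+\alpha\theta^+}{1+\alpha E^Q[\theta^+]}$ and $L_2'=\frac{L+\alpha\theta^-}{1+\alpha E^Q[\theta^-]}$ (essentially redoing the decomposition of Lemma \ref{Lemma1}(i) inside the proof), deduces $E^Q[\vartheta_\varphi\theta]=0$ for all $\theta\in L^1_0$, and finally chooses $\theta=\vartheta_\varphi$ to conclude $\vartheta_\varphi=0$. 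You bypass the representation entirely: working with the functional $\psi=\varphi_1-\varphi_2$ itself, the convex segments $(1-t)L+tL_1$ give $\psi(L_1-L)=0$ for every $L_1\in\mathcal{L}^Q$, and then differences together with Lemma \ref{Lemma1}(i) (cited as a black box rather than reproved) yield $\psi\equiv0$ on $L^1_0$. Your route is slightly more elementary — no Riesz-type identification, no final self-testing step — and isolates cleanly the only genuine issue, namely that positivity forbids arbitrary linear test directions; the paper's route has the side benefit of exhibiting the derivative concretely as an element of $L^\infty_0(\Omega,\mathcal{F},Q)$, a representation it uses repeatedly afterwards. The only cosmetic point in your write-up is the degenerate case $L_1=L$, where the division by $|L_t'-L|_{L^1(Q)}$ is vacuous but the conclusion $\psi(L_1-L)=0$ is trivial anyway.
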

\begin{proof}
The proof of the lemma is strongly related with Lemma \ref{Lemma1}. For $i=1,2,$ let $\varphi_i\in L(L_0^1(\Omega,\mathcal{F},Q); \mathbb{R})$ be such that (\ref{DFQL}) is satisfied with  $(DF_Q)(L)$ replaced by $\varphi_i$. Due to Lemma \ref{Lemma1} there is a unique $\vartheta_\varphi\in L_0^\infty(\Omega,{\cal F},Q)$ such that, for $\varphi:=\varphi_1-\varphi_2\in L(L_0^1(\Omega,\mathcal{F},Q);\mathbb{R})$, it holds $E^Q[\vartheta_\varphi\eta]=\varphi(\eta)$, for all $\eta\in L^1_0(\Omega,\mathcal{F},Q)$. Then, from the difference of the both relations (\ref{DFQL}) satisfied by $\varphi_i,\, i=1,2$, we have
\begin{equation}\label{equ3.3}
E^Q[\vartheta_\varphi(L'-L)]=\varphi(L'-L)=o(|L'-L|_{L^1(Q)}),
\end{equation}
for all $L'\in\mathcal{L}^Q$ with $|L'-L|_{L^1(Q)}\rightarrow 0$.

Let now $\theta$ be an arbitrary element of $L^1_0(\Omega,\mathcal{F},Q)$, which we decompose in its positive and its negative parts $\theta=\theta^+-\theta^-$ (Observe that $E^Q[\theta^+]=E^Q[\theta^-]$), and letting $\alpha>0$, we define the following densities in $\mathcal{L}^Q$:

\centerline{$\displaystyle L'_1=\frac{L+\alpha\theta^+}{1+\alpha E^Q[\theta^+]},\quad L'_2=\frac{L+\alpha\theta^-}{1+\alpha E^Q[\theta^-]}.$}

\noindent Obviously,

\centerline{$\displaystyle L_1'-L=\alpha\frac{\theta^+-LE^Q[\theta^+]}{1+\alpha E^Q[\theta^+]},\, L_2'-L=\alpha\frac{\theta^--LE^Q[\theta^-]}{1+\alpha E^Q[\theta^-]},\, \mbox{ and } |L_i'-L|_{L^1(Q)}\le \frac{2\alpha E^Q[\theta^+]}{1+\alpha E^Q[\theta^+]},$}

\smallskip

\noindent for $i=1,2,$ and from (\ref{equ3.3})

\centerline{$\displaystyle\frac{\alpha}{1+\alpha E^Q[\theta^+]}E^Q[\vartheta_\varphi \theta]=E^Q[\vartheta_\varphi (L_1'-L_2')]=\displaystyle o\big(\frac{2\alpha E^Q[\theta^+]}{1+\alpha E^Q[\theta^+]}\big),\, \mbox{ as } \alpha\rightarrow 0.$}

\smallskip

\noindent But this implies that $E^Q[\vartheta_\varphi \theta]=0$, for all $\theta\in L^1_0(\Omega,\mathcal{F},Q)$. Consequently, choosing $\theta=\vartheta_\varphi$ we see that $\vartheta_\varphi=0$, $Q$-a.s. Therefore, $\varphi_1-\varphi_2=\varphi=0.$
\end{proof}

\medskip

After having seen that the definition of the derivative of the function $F_Q:\mathcal{L}^Q\rightarrow\mathbb{R}$ introduced in (\ref{FQL}) is well formulated, let us study now some properties of this derivative. For this we use the following notation: For $L\in\mathcal{L}^Q$ we put $Q_L:=LQ.$ Note that $Q_L$ is a probability on $(\Omega,\mathcal{F})$ and $\mathcal{L}^{Q_L}=\{L'\in L^1(\Omega,\mathcal{F},Q_L; \mathbb{R}_{+}):\ E^{Q_L}[L'](=E^Q[L'L])=1\}$.

\begin{lemma}\label{Lemma3.3}
Let $L\in\mathcal{L}^Q.$ Then the function $F_Q:\mathcal{L}^Q\rightarrow\mathbb{R}$ is differentiable at $L$ if and only if $F_{Q_L}:\mathcal{L}^{Q_L}\rightarrow\mathbb{R}$ is differentiable at $L_0=1$. Moreover, if $F_Q:\mathcal{L}^Q\rightarrow\mathbb{R}$ is differentiable at $L$ (and, thus, equivalently,  $F_{Q_L}:\mathcal{L}^{Q_L}\rightarrow\mathbb{R}$ is differentiable at $L_0=1$), then we have
\begin{equation}\label{equivDFQL}
\begin{split}
&DF_{Q_L}(1)=DF_Q(L)-E^{Q_L}[DF_Q(L)]\in L_0^\infty(\Omega,\mathcal{F},Q_L),\ Q_L\mbox{-a.s.}\, (\sim Q\mbox{-a.s.}),\\
&DF_Q(L)=DF_{Q_L}(1)-E^Q[DF_{Q_L}(1)]\in L_0^\infty(\Omega,\mathcal{F},Q),\ Q\mbox{-a.s.}
\end{split}
\end{equation}
\end{lemma}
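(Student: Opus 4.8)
The plan is to exploit the defining multiplicative identity $F_Q(L'L)=F_{LQ}(L')=F_{Q_L}(L')$, valid for $L\in\mathcal{L}^Q$ and $L'\in\mathcal{L}^{Q_L}$, together with the one-to-one correspondence $L''\leftrightarrow L'=L''/L$ between elements $L''$ of $\mathcal{L}^Q$ and elements $L'$ of $\mathcal{L}^{Q_L}$ (note $E^{Q_L}[L''/L]=E^Q[L'']=1$). First I would establish the norm equivalence needed to pass between the two notions of ``$o(\cdot)$'': since $|L''-L|_{L^1(Q)}=E^Q[L|L''/L-1|]=|L'-1|_{L^1(Q_L)}$, the increments $L''-L$ in $\mathcal{L}^Q$ and $L'-1$ in $\mathcal{L}^{Q_L}$ have exactly the same $L^1$-norms under $Q$ and $Q_L$ respectively, so a remainder that is $o(|L''-L|_{L^1(Q)})$ is the same as one that is $o(|L'-1|_{L^1(Q_L)})$. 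This is the technical point that makes the whole argument go through cleanly.

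Next, assume $F_Q$ is differentiable at $L$ with derivative $\zeta:=DF_Q(L)\in L^\infty_0(\Omega,\mathcal F,Q)$ (using Lemma \ref{Lemma1}(ii) to identify the functional with a random variable). For $L'\in\mathcal{L}^{Q_L}$ put $L''=L'L\in\mathcal{L}^Q$; then
\begin{equation*}
F_{Q_L}(L')-F_{Q_L}(1)=F_Q(L'')-F_Q(L)=E^Q[\zeta(L''-L)]+o(|L''-L|_{L^1(Q)})
=E^{Q_L}[\zeta(L'-1)]+o(|L'-1|_{L^1(Q_L)}),
\end{equation*}
where I used $E^Q[\zeta(L''-L)]=E^Q[\zeta L(L'-1)]=E^{Q_L}[\zeta(L'-1)]$ and the norm identity above. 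Since $L'-1$ ranges over $\mathcal{L}^{Q_L}-1$, the map $\eta\mapsto E^{Q_L}[\zeta\eta]$ restricted to $L^1_0(\Omega,\mathcal F,Q_L)$ is a candidate derivative; to put it in the canonical form of Lemma \ref{Lemma1}(ii) I would recenter: $E^{Q_L}[\zeta\eta]=E^{Q_L}[(\zeta-E^{Q_L}[\zeta])\eta]$ for $\eta\in L^1_0$, so by the uniqueness in Lemma \ref{Lemma3.2} (applied over $Q_L$) we get $DF_{Q_L}(1)=\zeta-E^{Q_L}[\zeta]=DF_Q(L)-E^{Q_L}[DF_Q(L)]$, which lies in $L^\infty_0(\Omega,\mathcal F,Q_L)$ (and $Q_L\sim Q$ since $L>0$). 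This proves the forward implication and the first displayed formula.

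For the converse, assume $F_{Q_L}$ is differentiable at $1$ with derivative $\beta:=DF_{Q_L}(1)\in L^\infty_0(\Omega,\mathcal F,Q_L)$. Given $L''\in\mathcal{L}^Q$ set $L'=L''/L\in\mathcal{L}^{Q_L}$ and run the same chain of equalities backwards:
\begin{equation*}
F_Q(L'')-F_Q(L)=F_{Q_L}(L')-F_{Q_L}(1)=E^{Q_L}[\beta(L'-1)]+o(|L'-1|_{L^1(Q_L)})
=E^Q[\beta(L''-L)]+o(|L''-L|_{L^1(Q)}),
\end{equation*}
using $E^{Q_L}[\beta(L'-1)]=E^Q[\beta L(L'-1)]=E^Q[\beta(L''-L)]$. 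Recentering $E^Q[\beta\eta]=E^Q[(\beta-E^Q[\beta])\eta]$ on $L^1_0(\Omega,\mathcal F,Q)$ and invoking Lemma \ref{Lemma3.2} over $Q$ gives $DF_Q(L)=\beta-E^Q[\beta]=DF_{Q_L}(1)-E^Q[DF_{Q_L}(1)]\in L^\infty_0(\Omega,\mathcal F,Q)$, which is the second formula. I do not anticipate a serious obstacle here; the only thing to handle with a little care is the passage between $o$-terms, i.e. verifying that $L'\to 1$ in $L^1(Q_L)$ if and only if $L''\to L$ in $L^1(Q)$ — but this is immediate from the exact norm identity $|L''-L|_{L^1(Q)}=|L'-1|_{L^1(Q_L)}$, so it is really just bookkeeping. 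One should also note at the outset that $\mathcal L^{Q_L}=\{L''/L: L''\in\mathcal L^Q\}$ so that the substitution is a genuine bijection between the two families of admissible perturbations.
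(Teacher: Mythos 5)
Your proposal is correct and follows essentially the same route as the paper: both use the identity $F_Q(L'L)=F_{Q_L}(L')$, the exact norm equality $|L'L-L|_{L^1(Q)}=|L'-1|_{L^1(Q_L)}$ to transfer the $o(\cdot)$ remainders, and a recentering of the representing random variable combined with the uniqueness of Lemma \ref{Lemma3.2} to identify the derivatives. No gaps.
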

\begin{proof}
Let $F_Q:\mathcal{L}^Q\rightarrow\mathbb{R}$ be differentiable at $L\in\mathcal{L}^Q$. Then, due to the preceding Lemma \ref{Lemma3.2}, abusing notation, there is a ($Q$-a.s.) unique $DF_Q(L)\in L_0^\infty(\Omega,\mathcal{F},Q)$ such that,
$$ (DF_Q)(L)(\eta)=E^Q[DF_Q(L)\eta],\ \eta\in L_0^1(\Omega,\mathcal{F},Q), $$
and, thus,
$$ F_Q(L')-F_Q(L)=E^Q[DF_Q(L)(L'-L)]+o(|L'-L|_{L^1(Q)}), $$
for $L'\in\mathcal{L}^Q$, $|L'-L|_{L^1(Q)}\rightarrow0$. On the other hand, consider $L''\in \mathcal{L}^{Q_L}$ with $|L''-1|_{L^1(Q_L)}\rightarrow0$. Then $L''\cdot L\in\mathcal{L}^Q$, $|L''L-L|_{L^1(Q)}=|L''-1|_{L^1(Q_L)}$, and
\begin{equation*}
\begin{split}
&F_{Q_L}(L'')-F_{Q_L}(1)=f\big((L'' Q_L)_\xi\big)-f\big((Q_L)_\xi\big)=f\big(((L''L) Q)_\xi\big)-f\big((LQ)_\xi\big)\\
=&F_Q(L''L)-F_Q(L)=E^Q[DF_Q(L)(L''L-L)]+o(|L''L-L|_{L^1(Q)})\\
=& E^{Q_L}[DF_Q(L)(L''-1)]+o(|L''-1|_{L^1(Q_L)}).
\end{split}
\end{equation*}
As $E^{Q_L}[L''-1]=0$, this yields
$$ F_{Q_L}(L'')-F_{Q_L}(1)=E^{Q_L}\big[\big(DF_Q(L)-E^{Q_L}[DF_Q(L)]\big)(L''-1)\big]+o(|L''-1|_{L^1(Q_L)}). $$
Note that $DF_Q(L)-E^{Q_L}[DF_Q(L)]\in L^\infty_0(\Omega,\mathcal{F},Q_L)$.
Consequently, $F_{Q_L}:\mathcal{L}^{Q_L}\rightarrow\mathbb{R}$ is differentiable at $L_0=1$, and
\begin{equation}\label{*1}
DF_{Q_L}(1)=DF_Q(L)-E^{Q_L}[DF_Q(L)],\ Q_L\mbox{-a.s.}
\end{equation}
Conversely, given any $L\in\mathcal{L}^Q$ we suppose now that $F_{Q_L}:\mathcal{L}^{Q_L}\rightarrow\mathbb{R}$ is differentiable at $L_0=1$. Then, for $L'\in\mathcal{L}^Q$ with $|L'-L|_{L^1(Q)}\rightarrow0$, noting that $L'/L\in\mathcal{L}^{Q_L}$ (Remark that $Q_L\{L=0\}=E^Q[L\mathbf{1}_{\{L=0\}}]=0$), we have
\begin{equation*}
\begin{split}
&F_{Q}(L')-F_{Q}(L)=f\big((L' Q)_\xi\big)-f\big((L Q)_\xi\big)=f\big((\frac{L'}{L} Q_L)_\xi\big)-f\big((Q_L)_\xi\big)\\
=&F_{Q_L}(\frac{L'}{L})-F_{Q_L}(1)=E^{Q_L}[DF_{Q_L}(1)(\frac{L'}{L}-1)]+o(|\frac{L'}{L}-1|_{L^1(Q_L)})\\
=& E^Q[DF_{Q_L}(1)(L'-L)]+o(|L'-L|_{L^1(Q)})\\
=& E^Q\big[\big(DF_{Q_L}(1)-E^Q[DF_{Q_L}(1)]\big)(L'-L)\big]+o(|L'-L|_{L^1(Q)}).
\end{split}
\end{equation*}
As $DF_{Q_L}(1)\in L^\infty_0(\Omega,\mathcal{F},Q_L)$, it holds that
$DF_{Q_L}(1)-E^Q[DF_{Q_L}(1)]\in L^\infty_0(\Omega,\mathcal{F},Q)$, and, consequently, $F_Q:\mathcal{L}^Q\rightarrow\mathbb{R}$ is differentiable at $L$ and $DF_Q(L)=DF_{Q_L}(1)-E^Q[DF_{Q_L}(1)],\ Q$-a.s.
\end{proof}

\medskip

The next statement investigates the form of the derivative with respect to the density. The result is comparable with the characterisation of the derivative of a function $f:\mathcal{P}_2(\mathbb{R}^d)\rightarrow \mathbb{R}$ at a probability law $P_{\vartheta_0}$ by a Borel function $h_0:\mathbb{R}^d\rightarrow\mathbb{R}$, $D\widetilde{f}(\vartheta_0)=h_0(\vartheta_0),$  where $h_0$ depends on the random variable $\vartheta_0$ only through its law $P_{\vartheta_0}$, see (\ref{equ2.4}).

\smallskip

\begin{theorem}\label{P3}
Let $F_{Q_L}:\mathcal{L}^{Q_L}\rightarrow\mathbb{R}$ be differentiable at $L_0=1$. Then there exists a bounded Borel function $g:\mathbb{R}^d\rightarrow\mathbb{R}$ such that $DF_{Q_L}(1)=g(\xi),\ Q$-a.s. Moreover, $g$ depends on $(Q,L,\xi)$ only through the law $(Q_L)_\xi$.
\end{theorem}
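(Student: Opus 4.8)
The plan is to show that $DF_{Q_L}(1)$, viewed as an element of $L^\infty_0(\Omega,\mathcal{F},Q)$, is measurable with respect to $\sigma(\xi)$ up to $Q$-null sets, and then invoke the Doob--Dynkin lemma to write it as $g(\xi)$ for a Borel function $g$. The key observation driving the $\sigma(\xi)$-measurability is that $F_{Q_L}$ depends on a density $L'\in\mathcal{L}^{Q_L}$ only through the law $(L'Q_L)_\xi$, hence only through the conditional expectation $E^{Q_L}[L'\mid\xi]$; so the derivative should only ``see'' the $\sigma(\xi)$-measurable part of its argument. Concretely, I would proceed as follows.

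\textbf{Step 1: Reduction to conditional densities.} Fix $L'\in\mathcal{L}^{Q_L}$ and set $L'':=E^{Q_L}[L'\mid\xi]$. Then $L''\in\mathcal{L}^{Q_L}$ as well (positivity, integrability and $E^{Q_L}[L'']=1$ are immediate), and by the definition of $(L'Q_L)_\xi$ through test functions $\varphi\in b\mathcal{B}(\mathbb{R}^d)$ one has $(L'Q_L)_\xi=(L''Q_L)_\xi$, because $E^{Q_L}[\varphi(\xi)L']=E^{Q_L}[\varphi(\xi)E^{Q_L}[L'\mid\xi]]=E^{Q_L}[\varphi(\xi)L'']$. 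Therefore $F_{Q_L}(L')=F_{Q_L}(L'')$ for every $L'\in\mathcal{L}^{Q_L}$. Also note $|L''-1|_{L^1(Q_L)}\le|L'-1|_{L^1(Q_L)}$ by Jensen, so the increment of the conditioned density is controlled by that of the original one.

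\textbf{Step 2: Differentiate along two curves with the same conditional part.} Let $\eta\in L^1_0(\Omega,\mathcal{F},Q_L)$ be arbitrary and consider, for small $\alpha>0$, the densities built from $\eta^+,\eta^-$ exactly as in the proof of Lemma \ref{Lemma3.2} (so that their difference is $\frac{\alpha}{1+\alpha E^{Q_L}[\eta^+]}\eta$ up to higher order). Applying the differentiability of $F_{Q_L}$ at $1$ to these curves and, separately, to the curves obtained by replacing $\eta$ with $E^{Q_L}[\eta\mid\xi]$, and using Step 1 to identify the left-hand sides, I get in the limit $\alpha\to0$
\begin{equation*}
E^{Q_L}\big[DF_{Q_L}(1)\,\eta\big]=E^{Q_L}\big[DF_{Q_L}(1)\,E^{Q_L}[\eta\mid\xi]\big]=E^{Q_L}\big[E^{Q_L}[DF_{Q_L}(1)\mid\xi]\,\eta\big].
\end{equation*}
Since $\eta\in L^1_0(\Omega,\mathcal{F},Q_L)$ was arbitrary and $DF_{Q_L}(1)-E^{Q_L}[DF_{Q_L}(1)\mid\xi]\in L^1_0(\Omega,\mathcal{F},Q_L)$, testing against this difference itself forces $DF_{Q_L}(1)=E^{Q_L}[DF_{Q_L}(1)\mid\xi]$, $Q_L$-a.s., hence $Q$-a.s. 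Thus $DF_{Q_L}(1)$ is $\sigma(\xi)\vee\mathcal{N}_{Q}$-measurable and bounded, so by the Doob--Dynkin lemma there is a bounded Borel $g:\mathbb{R}^d\to\mathbb{R}$ with $DF_{Q_L}(1)=g(\xi)$, $Q$-a.s.

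\textbf{Step 3: Dependence only through $(Q_L)_\xi$.} Suppose $(\tilde Q,\tilde L,\tilde\xi)$ is another triple with $(\tilde L\tilde Q)_{\tilde\xi}=(Q_L)_\xi=:\mu$, and let $\tilde g$ be the associated Borel function. I must show $g=\tilde g$, $\mu$-a.e. The argument is that $F_{Q_L}$, as a function of $L'\in\mathcal{L}^{Q_L}$, factors through the map $L'\mapsto(L'Q_L)_\xi\in\mathcal{P}(\mathbb{R}^d)$: any density $L'$ with prescribed conditional law, equivalently any probability measure $\nu\ll$ (in the appropriate sense) arising as $(L'Q_L)_\xi$, is parametrised by a Borel function $\ell:\mathbb{R}^d\to\mathbb{R}_+$ with $\int \ell\,d\mu=1$, via $L'=\ell(\xi)$ and $(L'Q_L)_\xi=\ell\cdot\mu$. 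Under this identification, $F_{Q_L}(\ell(\xi))=f(\ell\mu)$ depends only on $\ell$ and $\mu$, and the same holds for the $\tilde Q$-model. Differentiating $F_{Q_L}$ at $L_0=1$ in the direction $\eta=h(\xi)$ with $\int h\,d\mu=0$, $h$ bounded Borel, gives on one hand $E^{Q_L}[g(\xi)h(\xi)]=\int g h\,d\mu$ and on the other hand the same quantity $\frac{d}{d\alpha}\big|_0 f((1+\alpha h)\mu)$ in the $\tilde Q$-model equals $\int\tilde g h\,d\mu$; hence $\int(g-\tilde g)h\,d\mu=0$ for all such $h$, forcing $g=\tilde g$, $\mu$-a.e. (choosing $h=(g-\tilde g)-\int(g-\tilde g)d\mu$, which is bounded since $g,\tilde g$ are).

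\textbf{Main obstacle.} The delicate point is Step 3: I must make precise that, given $\mu\in\mathcal{P}(\mathbb{R}^d)$ that actually equals $(Q_L)_\xi$, every perturbation relevant to the derivative at $1$ is realised by a density of the form $\ell(\xi)$ with $\ell$ bounded Borel and $\int\ell\,d\mu=1$ — i.e.\ that admissible directions in $\mathcal{L}^{Q_L}$ reduce, after conditioning on $\xi$ (Step 1), to the canonical family $\{(1+\alpha h)\mu\}$ on the measure side independently of the underlying space. One must also check that $f$ being merely defined on $\mathcal{P}(\mathbb{R}^d)$ suffices: the only regularity used is the assumed differentiability of $F_{Q_L}$ at $1$, which transfers to differentiability of $\alpha\mapsto f((1+\alpha h)\mu)$ at $0$ with derivative $\int g h\,d\mu$, and this directional derivative is intrinsic to $(f,\mu)$. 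Once this reduction is stated carefully, the equality $g=\tilde g$ $\mu$-a.e.\ follows from the density of bounded $\mu$-centred functions in the relevant duality, exactly as in the uniqueness argument of Lemma \ref{Lemma3.2}.
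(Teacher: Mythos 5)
Your Steps 1 and 2 reproduce the paper's argument for the first assertion essentially verbatim: reduce to conditional densities via $F_{Q_L}(L')=F_{Q_L}\big(E^{Q_L}[L'\mid\xi]\big)$, push this through the differentiability along the curves built from $\eta^{\pm}$, obtain $E^{Q_L}[DF_{Q_L}(1)\eta]=E^{Q_L}\big[DF_{Q_L}(1)E^{Q_L}[\eta\mid\xi]\big]$ for all $\eta\in L^1_0(\Omega,\mathcal{F},Q_L)$, and test against $DF_{Q_L}(1)-E^{Q_L}[DF_{Q_L}(1)\mid\xi]$. That part is correct and is exactly the paper's route.

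The ``moreover'' part is where you diverge, and there is a genuine gap. Your Step 3 presupposes that the second representation $(\tilde Q,\tilde L,\tilde\xi)$ already admits a derivative, ``let $\tilde g$ be the associated Borel function,'' and then shows only that the two functions must coincide. The paper instead proves that differentiability at $L_0'=1$ \emph{transfers} from one representation to any other with the same law: given $\widehat L'\in\mathcal{L}^{Q'_{L'}}$, it writes $E^{Q'_{L'}}[\widehat L'\mid\xi']=h(\xi')$, transports $h$ to the unprimed space via $(\widehat L'Q'_{L'})_{\xi'}=(h(\xi)Q_L)_\xi$, and deduces from the differentiability of $F_{Q_L}$ at $1$ that $F_{Q'_{L'}}$ is differentiable at $1$ with derivative $g(\xi')$. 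Without this transfer the assertion ``$g$ depends only on $(Q_L)_\xi$'' is not fully established (and the subsequent notation $\partial_1F((Q_L)_\xi,\cdot)$ would not be well posed as a function of the law alone). Your own Step 1 machinery, applied in the primed model together with $|h(\xi')-1|_{L^1(Q'_{L'})}\le|\widehat L'-1|_{L^1(Q'_{L'})}$, is enough to close this, but it is missing from the write-up. A second, smaller point: testing $\int(g-\tilde g)h\,d\mu=0$ with $h=(g-\tilde g)-\int(g-\tilde g)d\mu$ only yields that $g-\tilde g$ is $\mu$-a.e.\ constant; you must invoke the normalisation $\int g\,d\mu=\int\tilde g\,d\mu=0$ coming from the identification of the derivative with an element of $L^\infty_0$ to conclude $g=\tilde g$ $\mu$-a.e.
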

\begin{proof}
Let $L''\in\mathcal{L}^{Q_L}$. As, for all $\varphi\in b\mathcal{B}(\mathbb{R}^d)$,
$$ \int_{\mathbb{R}^d}\varphi d(L''Q_L)_\xi=E^{Q_L}[L''\varphi(\xi)]=E^{Q_L}\big[E^{Q_L}[L''\,|\,\xi]\varphi(\xi)\big], $$
and as the conditional expectation $E^{Q_L}[L''\,|\,\xi]$ of $L''$ under $Q_L$, knowing $\xi$,  belongs to $\mathcal{L}^{Q_L}$, it follows that $(L''Q_L)_\xi=\big(E^{Q_L}[L''\,|\,\xi]Q_L\big)_\xi$. Therefore, $F_{Q_L}(L'')=F_{Q_L}\big(E^{Q_L}[L''\,|\,\xi]\big)$.
Consequently,
\begin{equation}\label{81}
\begin{split}
&E^{Q_L}[DF_{Q_L}(1)(L''-1)]=F_{Q_L}(L'')-F_{Q_L}(1)+o(|L''-1|_{L^1(Q_L)})\\
=&F_{Q_L}\big(E^{Q_L}[L''\,|\,\xi]\big)-F_{Q_L}(1)+o(|L''-1|_{L^1(Q_L)})\\
=&E^{Q_L}\big[(DF_{Q_L})(1)\big(E^{Q_L}[L''\,|\,\xi]-1\big)\big]+o(|L''-1|_{L^1(Q_L)}),
\end{split}
\end{equation}
for $L''\in\mathcal{L}^{Q_L}$ with $|L''-1|_{L^1(Q_L)}\rightarrow0$. Here we have used for the last equality that
$$\big|E^{Q_L}[L''\,|\,\xi]-1\big|_{L^1(Q_L)}\le |L''-1|_{L^1(Q_L)}, \mbox{ and so } o(|E^{Q_L}[L''\,|\,\xi]-1|_{L^1(Q_L)})=o(|L''-1|_{L^1(Q_L)}).$$

Let now $\eta\in L^1_0(\Omega,\mathcal{F},Q_L)$.  Arguing similarly as in the proof of Lemma \ref{Lemma3.2}, we put  $\eta_1:=\eta^+$, $\eta_2:=\eta^-$, and for $i=1,2$ and $\alpha>0$ we define
$$ L_i:=\frac{1+\alpha\eta_i}{1+\alpha E^{Q_L}[\eta_i]}\in\mathcal{L}^{Q_L},\, i=1,2.$$
Note that $\displaystyle L_i-1=\alpha\frac{\eta_i-E^{Q_L}[\eta_i]}{1+\alpha E^{Q_L}[\eta_i]}$ and, thus, $|L_i-1|_{L^1(Q_L)}\leq 2\alpha E^{Q_L}[\eta_i]\rightarrow0,\ \alpha\rightarrow0$ and
$$ \frac{1}{\alpha}(L_i-1)=\frac{\eta_i-E^{Q_L}[\eta_i]}{1+\alpha E^{Q_L}[\eta_i]}\longrightarrow\eta_i-E^{Q_L}[\eta_i],\, \mbox{ in } L^1(Q_L),\, \mbox{ as }\alpha\rightarrow0. $$
Hence, substituting $L''=L_i$ in \eqref{81}, dividing by $\alpha>0$ and taking the limit as $\alpha\searrow 0$ yields, for $i=1,2,$
$$ E^{Q_L}\big[DF_{Q_L}(1)\big(\eta_i-E^{Q_L}[\eta_i]\big)\big]=E^{Q_L}\big[DF_{Q_L}(1)\big(E^{Q_L}[\eta_i\,|\,\xi]-E^{Q_L}[\eta_i]\big)\big]. $$
But $\eta=\eta_1-\eta_2$ and $E^{Q_L}[\eta_1]=E^{Q_L}[\eta_2]$. Hence, for all $\eta\in L^1_0(\Omega,\mathcal{F},Q_L)$, we have
$$ E^{Q_L}[DF_{Q_L}(1)\eta]=E^{Q_L}\big[DF_{Q_L}(1)E^{Q_L}[\eta\, |\, \xi]\big],$$
and, thus,
\begin{equation}\label{*3}
E^{Q_L}\big[\big(DF_{Q_L}(1)-E^{Q_L}[DF_{Q_L}(1)\, |\, \xi]\big)\eta\big]=0,\ \mbox{ for all } \eta\in L^1_0(\Omega,\mathcal{F},Q_L).
\end{equation}
Choosing $\eta:=DF_{Q_L}(1)-E^{Q_L}[DF_{Q_L}(1)\,|\,\xi]$ in \eqref{*3} shows that
$$ DF_{Q_L}(1)=E^{Q_L}[DF_{Q_L}(1)\, |\, \xi],\  Q_L\mbox{-a.s.}(\sim Q\mbox{-a.s.}).$$
Consequently, there exists a Borel function $g:\mathbb{R}^d\rightarrow\mathbb{R}$ such that
\begin{equation}\label{new3A}
DF_{Q_L}(1)=g(\xi),\  Q\mbox{-a.s.},
\end{equation}
and from \eqref{equivDFQL}, $DF_Q(L)=DF_{Q_L}(1)-E^Q[DF_{Q_L}(1)]=\bar{g}(\xi),\ Q$-a.s., for $\bar{g}(y):=g(y)-E^Q[DF_{Q_L}(1)]$, $y\in\mathbb{R}^d$.

It remains to show that $g$ depends on $(Q,L,\xi)$ only through the law $(Q_L)_\xi$. Let $Q'$ be a probability measure on $(\Omega,\mathcal{F})$, $L'\in\mathcal{L}^{Q'}$ and $\xi'\in L^0(\Omega,\mathcal{F},Q';\mathbb{R}^d)$ be such that $(L'Q')_{\xi'}=(LQ)_\xi$, or, equivalently, $(Q'_{L'})_{\xi'}=(Q_L)_\xi$.

We still use the notation $F_{Q'_{L'}}(\widehat{L}'):=f\big((\widehat{L}'Q'_{L'})_{\xi'}\big),\ \widehat{L}'\in\mathcal{L}^{Q'_{L'}}$, and we are going to show that the differentiability of $F_{Q_L}:\mathcal{L}^{Q_L}\rightarrow \mathbb{R}$ at $L_0=1$ implies that of $F_{Q'_{L'}}:\mathcal{L}^{Q'_{L'}} \rightarrow\mathbb{R}$ at $L'_0=1$. Let $\widehat{L}'\in\mathcal{L}^{Q'_{L'}}$ be such that $|\widehat{L}'-1|_{L^1(Q'_{L'})}$ tends to $0$, and let $h:\mathbb{R}^d\rightarrow\mathbb{R}$ be a Borel function such that $E^{Q'_{L'}}[\widehat{L}'\, |\, \xi']=h(\xi'),\ Q'_{L'}$-a.s. Notice that, for all $\varphi\in b\mathcal{B}(\mathbb{R}^d)$,
\begin{equation}\label{new3}
\begin{split}
&\int_{\mathbb{R}^d}\varphi d(\widehat{L}'Q'_{L'})_{\xi'}=E^{Q'_{L'}}[\widehat{L}'\varphi(\xi')]=E^{Q'_{L'}}\big[E^{Q'_{L'}}[\widehat{L}'\,|\, \xi']\varphi(\xi')\big]\\
&=E^{Q'_{L'}}[h(\xi')\varphi(\xi')]=\int_{\mathbb{R}^d}h\cdot\varphi d(Q'_{L'})_{\xi'}=\int_{\mathbb{R}^d}h\cdot\varphi d(Q_{L})_{\xi}=E^{Q_L}[h(\xi)\varphi(\xi)].
\end{split}
\end{equation}
It is easy to check that $h(\xi)\in\mathcal{L}^{Q_L}$. Then it follows from \eqref{new3} that
$$ \int_{\mathbb{R}^d}\varphi d(\widehat{L}'Q'_{L'})_{\xi'}=E^{Q_L}[h(\xi)\varphi(\xi)]=\int_{\mathbb{R}^d}\varphi d\big(h(\xi)Q_{L}\big)_{\xi},\ \mbox{ for all } \varphi\in b\mathcal{B}(\mathbb{R}^d), $$
which implies that $(\widehat{L}'Q'_{L'})_{\xi'}=\big(h(\xi)Q_{L}\big)_{\xi}=\big(h(\xi')Q'_{L'}\big)_{\xi'}$. Then, recalling the definition of $F_{Q'_{L'}}$, we obtain
$$ F_{Q'_{L'}}(\widehat{L}')=f\big((\widehat{L}'Q'_{L'})_{\xi'}\big)=f\big((h(\xi)Q_{L})_\xi\big)=F_{Q_L}\big(h(\xi)\big). $$
Consequently, using the fact that $DF_{Q_L}(1)$ exists and coincides $Q$-a.s. (and thus also $Q_L$-a.s.) with $g(\xi)$ (see (\ref{new3A})), we have
\begin{equation*}
\begin{split}
&F_{Q'_{L'}}(\widehat{L}')-F_{Q'_{L'}}(1)=F_{Q_{L}}\big(h(\xi)\big)-F_{Q_{L}}(1)=E^{Q_L}\big[g(\xi)\big(h(\xi)-1\big)\big]+o(|h(\xi)-1|_{L^1(Q_L)})\\
=&E^{Q'_{L'}}\big[g(\xi')\big(h(\xi')-1\big)\big]+o(|h(\xi')-1|_{L^1(Q'_{L'})})=E^{Q'_{L'}}\big[g(\xi')\big(\widehat{L}'-1\big)\big]+o(|\widehat{L}'-1|_{L^1(Q'_{L'})}),
\end{split}
\end{equation*}
i.e., also $F_{Q'_{L'}}:\mathcal{L}^{Q'_{L'}}\rightarrow\mathbb{R}$ is differentiable at $L_0=1$, and
$$ DF_{Q'_{L'}}(1)=g(\xi'),\ Q'_{L'}\mbox{-a.s.}$$
This shows that the function $g$ introduced in (\ref{new3A}) depends on $(Q,L,\xi)$ only through the law $(Q_L)_\xi.$
Now the proof is complete.
\end{proof}

The above theorem justifies the following notation. Using the function $g$ introduced in (\ref{new3A}), we define:
$$\partial_1 F\big((Q_L)_\xi,x\big):=g(x),\ x\in\mathbb{R}^d.$$
We observe that the function is $(Q_L)_{\xi}(dx)$-a.s. well defined, and
\begin{equation}\label{***}
\partial_1 F\big((Q_L)_\xi,\xi\big)=g(\xi)=DF_{Q_L}(1),\ Q_L\mbox{-a.s.}
\end{equation}
\begin{remark}\label{Re3.2}
We have seen in Lemma \ref{Lemma3.3} that the differentiability of $F_Q:\mathcal{L}^Q\rightarrow\mathbb{R}$ at $L\in\mathcal{L}^Q$ is equivalent with that of $F_{Q_L}:\mathcal{L}^{Q_L}\rightarrow\mathbb{R}$ at $L_0=1$.  Moreover, $Q$-a.s.,
\begin{equation*}
\begin{split}
&DF_{Q_L}(1)=DF_Q(L)-E^{Q_L}[DF_Q(L)],\ Q_L\mbox{-a.s.}\, (\sim Q\mbox{-a.s.}),\\
&DF_Q(L)=DF_{Q_L}(1)-E^Q[DF_{Q_L}(1)],\ Q\mbox{-a.s.}
\end{split}
\end{equation*}

\noindent While $DF_{Q_L}(1)=g(\xi)$, $Q$-a.s., where $g(x)=\partial_1 F\big((Q_L)_\xi,x\big),\, x\in\mathbb{R}^d,$ depends on $(Q, L,\xi)$ only through $(Q_L)_\xi$, we see that
\begin{equation*}
\begin{split}
DF_Q(L)&=DF_{Q_L}(1)-E^Q[DF_{Q_L}(1)]\\
&=\partial_1 F\big((Q_L)_\xi,\xi\big)-E^Q\big[\partial_1 F\big((Q_L)_\xi,\xi\big)\big]=\bar{g}(\xi),\, Q\mbox{-a.s.},
\end{split}
\end{equation*}
\noindent where
$$\bar{g}(x):=\partial_L F\big(Q_\xi,(LQ)_\xi,x\big):=\partial_1 F\big((Q_L)_\xi,x\big)-\int_{\mathbb{R}^d}\partial_1 F\big((Q_L)_\xi,y\big)Q_\xi(dy), \, x\in\mathbb{R}^d,$$
depends not only on $(Q_L)_\xi$ but also on $Q_\xi$.
\end{remark}

Let us consider the following example.
\begin{example}\label{Example3.1}
Let $h\in C^1(\mathbb{R})$, $\varphi\in b\mathcal{B}(\mathbb{R})$, and $f(\mu):=h(\int_{\mathbb{R}}\varphi d\mu),\ \mu\in\mathcal{P}(\mathbb{R})$. For $\xi\in L^0(\Omega,\mathcal{F},Q)$, let $F_Q(L):=f\big((LQ)_\xi\big)=h\big(E^Q[L\varphi(\xi)]\big),\ L\in\mathcal{L}^Q$. It can be easily checked that the Fr\'echet derivative $DF_Q(L)\in L(L^1_0(\Omega,\mathcal{F},Q);\mathbb{R})$ has the form
$$ DF_Q(L)(\eta)=h'\big(E^Q[L\varphi(\xi)]\big)E^Q[\varphi(\xi)\eta],\ \eta\in L_0^1(\Omega,\mathcal{F},Q), $$
while its identification with an element of $L^\infty_0(\Omega,\mathcal{F},Q)$ is just
$$ DF_Q(L)=h'\big(E^Q[L\varphi(\xi)]\big)\big(\varphi(\xi)-E^Q[\varphi(\xi)]\big),\ Q\mbox{-a.s.},\, L\in\mathcal{L}^Q.$$
Consequently,
$$ \partial_L F\big(Q_\xi,(LQ)_\xi,x\big)=h'\Big(\int_{\mathbb{R}}\varphi d(Q_L)_\xi\Big)\Big(\varphi(x)-\int_{\mathbb{R}}\varphi dQ_\xi\Big),\ Q_\xi(dx)\mbox{-a.s.}, $$
and choosing instead of $(Q,L)$ the couple $(Q_L,1)$, we obtain
$$ \partial_1 F\big((Q_L)_\xi,x\big)=h'\big(E^{Q_L}[\varphi(\xi)]\big)\big(\varphi(x)-E^{Q_L}[\varphi(\xi)]\big),\ (Q_L)_\xi(dx)\mbox{-a.s.} $$

\end{example}

\subsection{Relation between the derivative w.r.t. the density and partial derivatives}

The objective of this subsection is to investigate the relation between the derivative with respect to the density introduced in the preceding subsection and partial derivatives of functions over a suitable space of probability measures studied by \cite{BCL}. For this we keep the notations introduced in the preceding subsection.

Let $Q$ be a probability measure over the Radon space $(\Omega,\mathcal{F}=\mathcal{B}(\Omega)\vee\mathcal{N}_Q)$. We note that, for $L\in\mathcal{L}^Q$ and $\xi\in L^0(\Omega,\mathcal{F},Q;\mathbb{R}^d)$,
$$ (LQ)_\xi(dx)=\int_{\mathbb{R}}z Q_{(L,\xi)}(dzdx). $$
Indeed, due to the transfer theorem we have, for all $\varphi\in b\mathcal{B}(\mathbb{R}^d)$,
\begin{equation}\label{821}
\int_{\mathbb{R}^d}\varphi d(LQ)_\xi=E^Q[L\varphi(\xi)]=\int_{\mathbb{R}^d}\varphi(x)\Big(\int_{\mathbb{R}}z Q_{(L,\xi)}(dzdx)\Big).
\end{equation}
In order to be able to work in the framework chosen by \cite{BCL}, we extend  \eqref{821} to signed measures and restrict to $(L,\xi)\in L^2(\Omega,\mathcal{F},Q)\times L^0(\Omega,\mathcal{F},Q;\mathbb{R}^d)$. Thus, given
$(L,\xi)\in L^2(\Omega,\mathcal{F},Q)\times L^0(\Omega,\mathcal{F},Q;\mathbb{R}^d)$, we put
\begin{equation}\label{822}
(LQ)_\xi(dx):=\int_{\mathbb{R}}z Q_{(L,\xi)}(dzdx)\, \big(=(L^+Q)_\xi(dx)-(L^-Q)_\xi(dx)\big).
\end{equation}
Then,
$$ Q_{(L,\xi)}\in\mathcal{P}_{2,0}(\mathbb{R}\times\mathbb{R}^d):=\big\{\mu\in\mathcal{P}(\mathbb{R}\times\mathbb{R}^d):\ \mu(\cdot\times\mathbb{R}^d)\in\mathcal{P}_2(\mathbb{R})\big\}, $$
and
$$ (LQ)_\xi(dx)=\int_{\mathbb{R}}z Q_{(L,\xi)}(dzdx)\in\mathcal{M}(\mathbb{R}^d):=\big\{\alpha\mu_1-\beta\mu_2\,\big|\,\mu_1,\mu_2\in\mathcal{P}(\mathbb{R}^d),\ \alpha,\ \beta\in \mathbb{R}_+\big\}. $$
We observe that $Q_{(L,\xi)}$ has the disintegration
$$ Q_{(L,\xi)}(dzdx)=Q\{L\in dz\,|\, \xi=x\}Q_\xi(dx)=Q_{L|\xi}(dz,x)Q_\xi(dx),$$
where $Q_{L|\xi}(dz,x)$ denotes the regular conditional probability associated with $Q\{L\in dz\,|\, \xi=x\}$; its existence is guaranteed by the fact that $(\Omega,\mathcal{B}(\Omega))$ is a Radon space.

Let $f:\mathcal{M}(\mathbb{R}^d)\rightarrow\mathbb{R}$, and define $G:\mathcal{P}_{2,0}(\mathbb{R}\times\mathbb{R}^d)\rightarrow\mathbb{R}$ through the relation
$$ G(Q_{(L,\xi)}):=f\big((LQ)_\xi\big)=f\Big(\int_{\mathbb{R}}z Q_{(L,\xi)}(dzdx)\Big),\ (L,\xi)\in L^2(\Omega,\mathcal{F},Q)\times L^0(\Omega,\mathcal{F},Q;\mathbb{R}^d). $$
As we suppose that $\mathcal{F}$ is ``rich enough'', we have  $\mathcal{P}(\mathbb{R}\times\mathbb{R}^d)=\big\{Q_{(L,\xi)},\, (L,\xi)\in L^0(\Omega,\mathcal{F},Q;\mathbb{R}\times\mathbb{R}^d)\big\}$ and $\mathcal{P}_{2,0}(\mathbb{R}\times\mathbb{R}^d)=\big\{Q_{(L,\xi)},\, (L,\xi)\in L^2(\Omega,\mathcal{F},Q;\mathbb{R})\times L^0(\Omega,\mathcal{F},Q;\mathbb{R}^d)\big\}$, and the mapping
$G$ is well defined on $\mathcal{P}_{2,0}(\mathbb{R}\times\mathbb{R}^d)$.

Inspired by the approach by P.L. Lions (see \cite{C12}) for the study of the differentiability of a function over $\mathcal{P}_2(\mathbb{R})$, \cite{BCL} introduced the partial derivative of $G$ through the Fr\'echet derivative of its lifted function  $\widetilde{G}_{Q,\xi}:L^2(\Omega,\mathcal{F},Q)\rightarrow\mathbb{R}$ defined by $\widetilde{G}_{Q,\xi}(L):=G(Q_{(L,\xi)}),\ L\in L^2(\Omega,\mathcal{F},Q)$. Observe that, in particular, for $L\in\mathcal{L}^Q\cap L^2(\Omega,\mathcal{F},Q)$,
\begin{equation}\label{823}
\widetilde{G}_{Q,\xi}(L)=G(Q_{(L,\xi)})=f\big((LQ)_\xi\big)=F_Q(L),
\end{equation}
where $F_Q:\mathcal{L}^Q\rightarrow\mathbb{R}$ has been introduced in (\ref{FQL}). So \eqref{823} relates derivatives w.r.t. the density with partial derivatives. Adapting Definition 3.1 in \cite{BCL} to our more special setting here, we make the following definition:
\begin{definition}
The mapping $G:\mathcal{P}_{2,0}(\mathbb{R}\times\mathbb{R}^d) \rightarrow\mathbb{R}$ is said to be (partially) differentiable with respect to $Q_{L|\xi}$ at $Q_{(L,\xi)}$, if $\widetilde{G}_{Q,\xi}:L^2(\Omega,\mathcal{F},Q)\rightarrow\mathbb{R}$ is Fr\'{e}chet differentiable at $L$.
\end{definition}
\begin{remark}\label{Remark3.3}
In fact, a direct translation of Definition 3.1 in \cite{BCL} would lead to the formulation that $G:\mathcal{P}_{2,0}(\mathbb{R}\times\mathbb{R}^d) \rightarrow\mathbb{R}$ is said to be partially differentiable w.r.t. $Q_{L|\xi}$ at $Q_{(L,\xi)}$, if there is some random variable $(L',\xi')\in  L^2(\Omega,\mathcal{F},Q;\mathbb{R})\times L^0(\Omega,\mathcal{F},Q;\mathbb{R}^d)$ with $Q_{(L',\xi')}=Q_{(L,\xi)}$ such that $\widetilde{G}_{Q,\xi'}:L^2(\Omega,\mathcal{F},Q)\rightarrow\mathbb{R}$ is Fr\'{e}chet differentiable at $L'$. However, as a consequence of Theorem 3.1 and Proposition 3.1 in \cite{BCL}, Corollary 3.1 (also in \cite{BCL}) states that the both definitions coincide; for more details we refer the interested readers to \cite{BCL}.
\end{remark}
Let us suppose that $G:\mathcal{P}_{2,0}(\mathbb{R}\times\mathbb{R}^d)\rightarrow\mathbb{R}$ is differentiable w.r.t. $Q_{L|\xi}$ at $Q_{(L,\xi)}$. Then, as $\widetilde{G}_{Q,\xi}:L^2(\Omega,\mathcal{F},Q)\rightarrow\mathbb{R}$ is Fr\'{e}chet differentiable at $L$, due to the Riesz' Representation Theorem
its derivative $D\widetilde{G}_{Q,\xi}(L)\in L(L^2(\Omega,\mathcal{F},Q);\mathbb{R})$ can be identified with an element of $L^2(\Omega,\mathcal{F},Q)$, also denoted by $D\widetilde{G}_{Q,\xi}(L)$, and Theorem 3.1 in \cite{BCL} states that there exists a $Q_{(L,\xi)}$-a.s. unique Borel function $g:\mathbb{R}\times\mathbb{R}^d\rightarrow\mathbb{R}$ such that
$$ D\widetilde{G}_{Q,\xi}(L)=g(L,\xi),\ Q\mbox{-a.s.} $$
Moreover, again due to Theorem 3.1 in \cite{BCL}, $g$ depends on $(Q, L,\xi)$ only through the law $Q_{(L,\xi)}$. This motivates the notation
$$ (\partial_\mu G)_1\big(Q_{(L,\xi)},z,x\big):=g(z,x),\ (z,x)\in\mathbb{R}\times\mathbb{R}^d. $$
Note that, for $L\in L^2(\Omega,\mathcal{F},Q)$, and $L'\in L^2(\Omega,\mathcal{F},Q_L(=LQ))$ such that $L'L\in L^2(\Omega,\mathcal{F},Q)$, we have
\begin{equation}\label{824}
\begin{split}
&\widetilde{G}_{Q,\xi}(L'L)=G\big(Q_{(L'L,\xi)}\big)=f\big((L'LQ)_\xi\big)=f\big((L'Q_L)_\xi\big)\\
&=G\big((Q_L)_{(L',\xi)}\big)=\widetilde{G}_{Q_L,\xi}(L').
\end{split}
\end{equation}
To simplify our argument, let us consider $L\in L^\infty(\Omega,\mathcal{F},Q)$. Then \eqref{824} holds for all $L'\in L^2(\Omega,\mathcal{F},Q_L)$. Recall that we have supposed that $G:\mathcal{P}_{2,0}(\mathbb{R}\times\mathbb{R}^d)\rightarrow\mathbb{R}$ is differentiable w.r.t. $Q_{L|\xi}$ at $Q_{(L,\xi)}$. Then, taking into account \eqref{824}, we have for $L'\in L^2(\Omega,\mathcal{F},Q_L)$ with $|L'-1|_{L^2(Q_L)}\rightarrow0$,
\begin{equation}\label{*4}
\begin{split}
&\widetilde{G}_{Q_L,\xi}(L')-\widetilde{G}_{Q_L,\xi}(1)=\widetilde{G}_{Q,\xi}(L'L)-\widetilde{G}_{Q,\xi}(L)\\
=&(D\widetilde{G}_{Q,\xi})(L)(L'L-L)+o(|L'L-L|_{L^2(Q)})\\
=&E^Q\big[(\partial_\mu G)_1\big(Q_{(L,\xi)},(L,\xi)\big)(L'L-L)\big]+o(|L'L-L|_{L^2(Q)})\\
=&E^{Q_L}\big[(\partial_\mu G)_1\big(Q_{(L,\xi)},(L,\xi)\big)(L'-1)\big]+o(|L'L-L|_{L^2(Q)}).
\end{split}
\end{equation}
Here $(\partial_\mu G)_1\big(Q_{(L,\xi)},(L,\xi)\big)=(D\widetilde{G}_{Q,\xi})(L)\in L^2(\Omega,\mathcal{F},Q)$.
Thus, observing that, for $L$ bounded by $C_L\in \mathbb{R}_+$,
$$ |L'L-L|^2_{L^2(Q)}=E^Q[L^2|L'-1|^2]\leq C_LE^{Q_L}[|L'-1|^2]=C_L|L'-1|^2_{L^2(Q_L)}, $$
it follows from \eqref{*4} that
$$ \widetilde{G}_{Q_L,\xi}(L')-\widetilde{G}_{Q_L,\xi}(1)=E^{Q_L}\big[(\partial_\mu G)_1\big(Q_{(L,\xi)},(L,\xi)\big)(L'-1)\big]+o(|L'-1|_{L^2(Q_L)}), $$
as $|L'-1|_{L^2(Q_L)}\rightarrow0$. This proves that $G:\mathcal{P}_{2,0}(\mathbb{R}\times\mathbb{R}^d)\rightarrow\mathbb{R}$ is partially differentiable w.r.t. $(Q_L)_{L'|\xi}$ at $(Q_L)_{(L',\xi)}=(Q_L)_{(1,\xi)}$, and
\begin{equation}\label{825}
D\widetilde{G}_{Q_L,\xi}(1)=(\partial_\mu G)_1\big(Q_{(L,\xi)},(L,\xi)\big)\in L^2(\Omega,\mathcal{F},Q_L).
\end{equation}
Hence, $Q_L$-a.s.,
\begin{equation}\label{826}
(\partial_\mu G)_1\big((Q_L)_{(1,\xi)},(1,\xi)\big)=D\widetilde{G}_{Q_L,\xi}(1)
=(\partial_\mu G)_1\big(Q_{(L,\xi)},(L,\xi)\big).
\end{equation}
Note that $(Q_L)_{(1,\xi)}=\delta_1\otimes(Q_L)_\xi$, where $\delta_1\in\mathcal{P}_2(\mathbb{R})$ is the Dirac measure with mass at $1$. Indeed, for all $\varphi\in b\mathcal{B}(\mathbb{R}\times\mathbb{R}^d)$,
$$ \int_{\mathbb{R}\times\mathbb{R}^d}\varphi d(Q_L)_{(1,\xi)}=E^{Q_L}[\varphi(1,\xi)]=\int_{\mathbb{R}^d}\varphi(1,x)(Q_L)_\xi(dx)=\int_{\mathbb{R}\times\mathbb{R}^d}\varphi d\big(\delta_1\otimes(Q_L)_\xi\big). $$
Consequently, the partial differentiability of $G:\mathcal{P}_{2,0}(\mathbb{R}\times\mathbb{R}^d)\rightarrow\mathbb{R}$ at $Q_{(L,\xi)}$ w.r.t. $Q_{L|\xi}$ implies that of $G$ at $(Q_L)_{(1,\xi)}$ w.r.t. $(Q_L)_{L'|\xi}$, and from \eqref{826},
$$ (\partial_\mu G)_1\big(Q_{(L,\xi)},(L,\xi)\big)=(\partial_\mu G)_1\big(\delta_1\otimes(Q_L)_\xi,(1,\xi)\big),\ Q_L\mbox{-a.s.} $$
This relation motivates us to introduce (with abusing notation) the function
\begin{equation}\label{827}
(\partial_\mu G)_1(\gamma,x):=(\partial_\mu G)_1\big(\delta_1\otimes\gamma,(1,x)\big),\ x\in\mathbb{R}^d,\ \gamma(dx)\mbox{-a.e.},\ \gamma\in\mathcal{P}(\mathbb{R}^d).
\end{equation}
Obviously, $(\partial_\mu G)_1(\gamma,\cdot):\mathbb{R}^d\rightarrow\mathbb{R}$ is Borel measurable, and
\begin{equation}\label{828}
(\partial_\mu G)_1\big(Q_{(L,\xi)},(L,\xi)\big)=(\partial_\mu G)_1\big((Q_L)_\xi,\xi\big),\ Q_L\mbox{-a.s.}
\end{equation}
We come now back to \eqref{823}. Let now $L\in\mathcal{L}^Q\cap L^\infty(\Omega,\mathcal{F},Q)$ and $L'\in\mathcal{L}^{Q_L}\cap L^2(\Omega,\mathcal{F},Q_L)$. Then
$$ F_{Q_L}(L')=f\big((L'Q_L)_\xi\big)=G\big((Q_L)_{(L',\xi)}\big). $$
We assume that $G:\mathcal{P}_{2,0}(\mathbb{R}\times\mathbb{R}^d)\rightarrow\mathbb{R}$ is partial differentiable at $(Q_L)_{(1,\xi)}$ w.r.t. $(Q_L)_{L'|\xi}$. Then, for $L'\in\mathcal{L}^{Q_L}\cap L^2(\Omega,\mathcal{F},Q_L)$ with $|L'-1|_{L^2(Q_L)}\rightarrow0$, using \eqref{827}, we see that
\begin{equation*}
\begin{split}
&F_{Q_L}(L')-F_{Q_L}(1)=G\big((Q_L)_{(L',\xi)}\big)-G\big((Q_L)_{(1,\xi)}\big)\\
=&E^{Q_L}\big[(\partial_\mu G)_1\big((Q_L)_{(1,\xi)},(1,\xi)\big)(L'-1)\big]+o(|L'-1|_{L^2(Q_L)})\\
=&E^{Q_L}\big[\big\{(\partial_\mu G)_1\big((Q_L)_{\xi},\xi\big)-E^{Q_L}\big[(\partial_\mu G)_1\big((Q_L)_{\xi},\xi\big)\big]\big\}(L'-1)\big]+o(|L'-1|_{L^2(Q_L)}).
\end{split}
\end{equation*}
Hence, the restriction of $F_{Q_L}$ from $\mathcal{L}^{Q_L}$ to $\mathcal{L}^{Q_L}\cap L^2(\Omega,\mathcal{F},Q_L)(\subset L^2(\Omega,\mathcal{F},Q_L))$ is Fr\'{e}chet differentiable (in $L^2(Q_L)$-sense) at $L_0=1$. Let us define
$$ \Phi_{Q_L}(1):=(\partial_\mu G)_1\big((Q_L)_\xi,\xi\big)-E^{Q_L}\big[(\partial_\mu G)_1\big((Q_L)_\xi,\xi\big)\big],\ Q_L\mbox{-a.s.}$$
As $(\partial_\mu G)_1\big((Q_L)_\xi,\xi\big)\in L^2(\Omega,\mathcal{F},Q_L)$, the random variable $\Phi_{Q_L}(1)$ belongs to $L_0^2(\Omega,\mathcal{F},Q_L)$. Similar to the proof of Lemma \ref{Lemma3.2} we obtain that $\Phi_{Q_L}(1)$ is the unique element in $L_0^2(\Omega,\mathcal{F},Q_L)$ such that
\begin{equation}\label{PhiQL}
\begin{split}
&F_{Q_L}(L')-F_{Q_L}(1)=E^{Q_L}\big[\Phi_{Q_L}(1)(L'-1)\big]+o(|L'-1|_{L^2(Q_L)}),
\end{split}
\end{equation}
for $L'\in\mathcal{L}^{Q_L}\cap L^2(\Omega,\mathcal{F},Q_L)$ with $|L'-1|_{L^2(Q_L)}\rightarrow0$.

Let us suppose now that $F_{Q_L}:\mathcal{L}^{Q_L}\rightarrow\mathbb{R}$ is differentiable at $L_0=1$. Then from \eqref{***},
$$DF_{Q_L}(1)=\partial_1 F\big((Q_L)_\xi,\xi\big)(\in L^\infty_0(\Omega,\mathcal{F},Q_L)),\, Q_L\mbox{-a.s.},$$
is such that
$$F_{Q_L}(L')-F_{Q_L}(1)=E^{Q_L}\big[DF_{Q_L}(1)(L'-1)\big]+o(|L'-1|_{L^1(Q_L)}),$$
for $L'\in\mathcal{L}^{Q_L}$ with $|L'-1|_{L^1(Q_L)}\rightarrow0$. Let us restrict to $L'\in\mathcal{L}^{Q_L}\cap L^2(\Omega,\mathcal{F},Q_L)$ with\newline $|L'-1|_{L^2(Q_L)}\rightarrow0$. Then, putting
$$ \alpha(L'):=F_{Q_L}(L')-F_{Q_L}(1)-E^{Q_L}\big[DF_{Q_L}(1)(L'-1)\big] \big(=o(|L'-1|_{L^1(Q_L)})\big), $$
we have
$$ \frac{\alpha(L')}{|L'-1|_{L^2(Q_L)}}\leq \frac{\alpha(L')}{|L'-1|_{L^1(Q_L)}}\rightarrow0,\ \mbox{ as } |L'-1|_{L^1(Q_L)}\leq |L'-1|_{L^2(Q_L)}\rightarrow0.  $$
Consequently,
$$F_{Q_L}(L')-F_{Q_L}(1)=E^{Q_L}\big[DF_{Q_L}(1)(L'-1)\big]+o(|L'-1|_{L^2(Q_L)}),$$
and from the uniqueness of $\Phi_{Q_L}(1)\in L_0^2(\Omega,\mathcal{F},Q_L)$ with (\ref{PhiQL}) we conclude that
$$\partial_1 F\big((Q_L)_\xi,\xi\big)=DF_{Q_L}(1)=\Phi_{Q_L}(1)=(\partial_\mu G)_1\big((Q_L)_\xi,\xi\big)-E^{Q_L}\big[(\partial_\mu G)_1\big((Q_L)_\xi,\xi\big)\big],\ Q_L\mbox{-a.s.}$$

The following statement is now immediate.

\begin{lemma}\label{Lemma3.4}
Let $f:\mathcal{M}(\mathbb{R}^d)\rightarrow \mathbb{R}$ be given such that the function   $G:\mathcal{P}_{2,0}(\mathbb{R}\times\mathbb{R}^d)\rightarrow\mathbb{R}$ defined for all probability $Q$
by $ G(Q_{(L,\xi)}):=f\big((LQ)_\xi\big),\ (L,\xi)\in L^2(\Omega,\mathcal{F},Q)\times L^0(\Omega,\mathcal{F},Q; \mathbb{R}^d),$
is partially differentiable w.r.t. $(Q_L)_{L'|\xi}$ at $(Q_L)_{(1,\xi)}$, and $F_{Q_L}:\mathcal{L}^{Q_L}\rightarrow\mathbb{R}$ introduced in (\ref{FQL}) is differentiable at $L_0=1$. Then,
$$ \partial_1 F\big((Q_L)_\xi,x\big)=(\partial_\mu G)_1\big((Q_L)_\xi,x\big)-E^{Q_L}\big[(\partial_\mu G)_1\big((Q_L)_\xi,\xi\big)\big],\ x\in\mathbb{R}^d,\ (Q_L)_\xi(dx)\mbox{-a.s.} $$
\end{lemma}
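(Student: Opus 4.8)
The plan is to read the identity directly off the computations already performed in this subsection and then upgrade it from an almost-sure identity of random variables to an almost-everywhere identity of Borel functions on $\mathbb{R}^d$. Under the stated hypotheses the discussion preceding the statement has already produced two facts. First, since $G$ is partially differentiable w.r.t. $(Q_L)_{L'|\xi}$ at $(Q_L)_{(1,\xi)}$, the restriction of $F_{Q_L}$ to $\mathcal{L}^{Q_L}\cap L^2(\Omega,\mathcal{F},Q_L)$ is $L^2(Q_L)$-Fr\'{e}chet differentiable at $L_0=1$, and
$$\Phi_{Q_L}(1)=(\partial_\mu G)_1\big((Q_L)_\xi,\xi\big)-E^{Q_L}\big[(\partial_\mu G)_1\big((Q_L)_\xi,\xi\big)\big]\in L^2_0(\Omega,\mathcal{F},Q_L)$$
is the unique element of $L^2_0(\Omega,\mathcal{F},Q_L)$ satisfying \eqref{PhiQL}. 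Second, since $F_{Q_L}:\mathcal{L}^{Q_L}\rightarrow\mathbb{R}$ is differentiable at $L_0=1$ in the sense of Definition \ref{Def3.1}, its derivative equals $\partial_1 F((Q_L)_\xi,\xi)\in L^\infty_0(\Omega,\mathcal{F},Q_L)$ by \eqref{***}.

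First I would restrict the first-order expansion defining $DF_{Q_L}(1)$ to directions $L'\in\mathcal{L}^{Q_L}\cap L^2(\Omega,\mathcal{F},Q_L)$ with $|L'-1|_{L^2(Q_L)}\rightarrow0$, and use $|L'-1|_{L^1(Q_L)}\le|L'-1|_{L^2(Q_L)}$ to conclude that $\partial_1 F((Q_L)_\xi,\xi)$ also satisfies \eqref{PhiQL}. The uniqueness of $\Phi_{Q_L}(1)$ in $L^2_0(\Omega,\mathcal{F},Q_L)$ then forces
$$\partial_1 F\big((Q_L)_\xi,\xi\big)=(\partial_\mu G)_1\big((Q_L)_\xi,\xi\big)-E^{Q_L}\big[(\partial_\mu G)_1\big((Q_L)_\xi,\xi\big)\big],\quad Q_L\mbox{-a.s.}$$

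The remaining step is to pass to the desired $(Q_L)_\xi(dx)$-a.s. form. Here $\partial_1 F((Q_L)_\xi,\cdot)$ is the Borel function $g$ from Theorem \ref{P3}, $(\partial_\mu G)_1((Q_L)_\xi,\cdot)$ is the Borel function defined in \eqref{827}, and $E^{Q_L}[(\partial_\mu G)_1((Q_L)_\xi,\xi)]$ is a constant, which may also be written $\int_{\mathbb{R}^d}(\partial_\mu G)_1((Q_L)_\xi,y)(Q_L)_\xi(dy)$. Since $(Q_L)_\xi$ is by definition the image of $Q_L$ under $\xi$, any Borel $\psi:\mathbb{R}^d\rightarrow\mathbb{R}$ satisfies $\psi(\xi)=0$ $Q_L$-a.s. if and only if $\psi=0$ $(Q_L)_\xi(dx)$-a.e.; applying this to $\psi(x):=\partial_1 F((Q_L)_\xi,x)-(\partial_\mu G)_1((Q_L)_\xi,x)+E^{Q_L}[(\partial_\mu G)_1((Q_L)_\xi,\xi)]$ and invoking the displayed identity yields the assertion. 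I do not expect any genuine obstacle: the substance of the lemma has been generated by the running computation, and the only point deserving a word of care is precisely this routine transfer between ``$\psi(\xi)=0$, $Q_L$-a.s.'' and ``$\psi=0$, $(Q_L)_\xi$-a.e.'', i.e. the change-of-variables principle for push-forward measures.
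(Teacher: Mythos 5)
Your proposal is correct and follows essentially the same route as the paper: the lemma is stated there as an immediate consequence of the preceding computation, which establishes exactly the two facts you cite (the $L^2(Q_L)$-differentiability with derivative $\Phi_{Q_L}(1)$ and its uniqueness in $L^2_0(\Omega,\mathcal{F},Q_L)$, plus the restriction of the $L^1$-expansion to $L^2$-directions), and your final transfer from the $Q_L$-a.s. identity in $\xi$ to the $(Q_L)_\xi(dx)$-a.e. identity on $\mathbb{R}^d$ is the routine push-forward step the paper leaves implicit.
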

We finish this subsection with the following example.
\begin{example}
Given any $g,h\in C^1_b(\mathbb{R})$ and $\psi\in b\mathcal{B}(\mathbb{R})$, we consider the function $\widetilde{f}:\mathcal{L}^Q\times L^0(\Omega,\mathcal{F},Q;\mathbb{R}^2)\rightarrow\mathbb{R}$ defined as follows:
$$\widetilde{f}(L,\xi):=g\big(E^{Q_L}\big[h\big(E^{Q_L}[\psi(\xi_1)\,|\,\xi_2]\big)\big]\big),\ (L,\xi=(\xi_1,\xi_2))\in\mathcal{L}^{Q}\times L^0(\Omega,\mathcal{F},Q;\mathbb{R}^2).$$
Let us begin with noting that $\widetilde{f}(L,\xi)$ depends on $(L,\xi)$ only through the law $(Q_L)_\xi.$ Indeed, let $(L',\xi'=(\xi'_1,\xi'_2))\in \mathcal{L}^{Q}\times L^0(\Omega,\mathcal{F},Q;\mathbb{R}^2)$ be such that $(Q_{L'})_{\xi'}=(Q_L)_\xi$, and let $\rho\in\mathcal{B}(\mathbb{R})$ be such that $E^{Q_{L'}}[\psi(\xi_1')\,|\,\xi_2']=\rho(\xi_2'),\, Q_{L'}$-a.s. Then, as for all $\varphi\in b\mathcal{B}(\mathbb{R})$,
\begin{equation*}
\begin{split}
&E^{Q_L}\big[E^{Q_L}[\psi(\xi_1)\,|\,\xi_2]\varphi(\xi_2)\big]=E^{Q_L}\big[
\psi(\xi_1)\varphi(\xi_2)\big]=\int_{\mathbb{R}^2} \psi(x_1)\varphi(x_2)(Q_L)_\xi(dx)\\
=&\int_{\mathbb{R}^2} \psi(x_1)\varphi(x_2)(Q_{L'})_{\xi'}(dx)=E^{Q_{L'}}\big[E^{Q_{L'}}[\psi(\xi'_1)\,|\,\xi'_2]\varphi(\xi'_2)\big]\\
=&E^{Q_{L'}}\big[\rho(\xi'_2)\varphi(\xi'_2)\big]=E^{Q_{L}}\big[\rho(\xi_2)\varphi(\xi_2)\big],
\end{split}
\end{equation*}
it follows that $E^{Q_L}[\psi(\xi_1)\,|\,\xi_2]=\rho(\xi_2)$, $Q_L$-a.s., and
\begin{equation*}
\begin{split}
\widetilde{f}(L',\xi')&=g\big(E^{Q_{L'}}\big[h\big(E^{Q_{L'}}[\psi(\xi'_1)\,|\,\xi'_2]\big)\big]\big)=g\big(E^{Q_{L'}}\big[h\big(\rho(\xi_2')\big)\big]\big)=g\big(E^{Q_{L}}\big[h\big(\rho(\xi_2)\big)\big]\big)\\
&=g\big(E^{Q_{L}}\big[h\big(E^{Q_{L}}[\psi(\xi_1)\,|\,\xi_2]\big)\big]\big)
=\widetilde{f}(L,\xi).
\end{split}
\end{equation*}
This shows that $\widetilde{f}:\mathcal{L}^Q\times L^0(\Omega,\mathcal{F},Q;\mathbb{R}^2)\rightarrow\mathbb{R}$ defines functions
$$F_{Q}(L)=f\big((LQ)_\xi\big):=\widetilde{f}(L,\xi),\, L\in\mathcal{L}^{Q}\ (\mbox{for fixed } \xi= L^0(\Omega,\mathcal{F},Q;\mathbb{R}^2)),$$ and
$$G(Q_{(L,\xi)}):=\widetilde{f}(L,\xi),\, (L,\xi=(\xi_1,\xi_2))\in\mathcal{L}^{Q}\times L^0(\Omega,\mathcal{F},Q;\mathbb{R}^2).$$
Let us now suppose that $L\in\mathcal{L}^Q\cap L^2(\Omega,\mathcal{F},Q)$. Then, for all $L'\in \mathcal{L}^{Q_L}\cap L^2(\Omega,\mathcal{F},Q_L)$ and for $\xi\in L^0(\Omega,\mathcal{F},Q;\mathbb{R}^2)$,
\begin{equation*}
\begin{split}
\widetilde{G}_{Q_L,\xi}(L'):&=G\big((Q_L)_{(L',\xi)}\big)=\widetilde{f}(L'L,\xi)=g\big(E^{L'Q_L}\big[h\big(E^{L'Q_L}[\psi(\xi_1)\,|\,\xi_2]\big)\big]\big)\\
&=\displaystyle g\big(E^{Q_L}\big[L'h\big(\frac{E^{Q_L}[L'\psi(\xi_1)\,|\,\xi_2]}{E^{Q_L}[L'\,|\,\xi_2]}\big)\big]\big),
\end{split}
\end{equation*}
and for the Fr\'{e}chet derivative in $L^2$-sense at $L'=1$ of
$$ L^2(\Omega,\mathcal{F},Q_L)\ni L'\rightarrow \widetilde{G}_{Q_L,\xi}(L')=g\big(E^{Q_L}\big[L'h\big(\frac{E^{Q_L}[L'\psi(\xi_1)\,|\,\xi_2]}{E^{Q_L}[L'\,|\,\xi_2]}\big)\big]\big)$$
we have, for all $\eta\in L^2(\Omega,\mathcal{F},Q_L)$,
\begin{equation*}
\begin{split}
D\widetilde{G}_{Q_L,\xi}(1)(\eta)&=g'\big(E^{Q_L}\big[h\big(E^{Q_L}[\psi(\xi_1)\,|\,\xi_2]\big)\big]\big)\cdot E^{Q_L}\Big\{\eta h\big(E^{Q_L}[\psi(\xi_1)\,|\,\xi_2]\big)\\
& \qquad + h'\big(E^{Q_L}[\psi(\xi_1)\,|\,\xi_2]\big)\cdot\big( E^{Q_L}[\eta\psi(\xi_1)\,|\,\xi_2]-E^{Q_L}[\psi(\xi_1)\,|\,\xi_2] E^{Q_L}[\eta\,|\,\xi_2]\big)\Big\}\\
&=E^{Q_L}\Big[g'\big(E^{Q_L}\big[h\big(E^{Q_L}[\psi(\xi_1)\,|\,\xi_2]\big)\big]\big)\\
&  \qquad \times\Big\{h\big(E^{Q_L}[\psi(\xi_1)\,|\,\xi_2]\big)+h'\big(E^{Q_L}[\psi(\xi_1)\,|\,\xi_2]\big)\big(\psi(\xi_1)-E^{Q_L}[\psi(\xi_1)\,|\,\xi_2] \big)\Big\}\eta\Big],
\end{split}
\end{equation*}
i.e.,
\begin{equation*}
\begin{split}
D\widetilde{G}_{Q_L,\xi}(1)
&=g'\big(E^{Q_L}\big[h\big(E^{Q_L}[\psi(\xi_1)\,|\,\xi_2]\big)\big]\big)\\
&  \qquad \times\Big\{h\big(E^{Q_L}[\psi(\xi_1)\,|\,\xi_2]\big)+h'\big(E^{Q_L}[\psi(\xi_1)\,|\,\xi_2]\big)\big(\psi(\xi_1)-E^{Q_L}[\psi(\xi_1)\,|\,\xi_2] \big)
\Big\},
\end{split}
\end{equation*}
and, for $x=(x_1,x_2)\in\mathbb{R}^2$, $(Q_L)_\xi(dx)$-a.s.,
\begin{equation*}
\begin{split}
(\partial_\mu G)_1\big((Q_L)_\xi,x\big)
&=g'\big(E^{Q_L}\big[h\big(E^{Q_L}[\psi(\xi_1)\,|\,\xi_2]\big)\big]\big) \cdot\Big\{h\big(E^{Q_L}[\psi(\xi_1)\,|\,\xi_2=x_2]\big)\\
& \qquad +h'\big(E^{Q_L}[\psi(\xi_1)\,|\,\xi_2=x_2]\big)\big(\psi(x_1)-E^{Q_L}[\psi(\xi_1)\,|\,\xi_2=x_2] \big)
\Big\}.
\end{split}
\end{equation*}
On the other hand, a direct computation also shows that $F_{Q_L}:\mathcal{L}^{Q_L}\rightarrow \mathbb{R}$ is differentiable, and due to Lemma \ref{Lemma3.4} we know that
$$ \partial_1 F\big((Q_L)_\xi,x\big)=(\partial_\mu G)_1\big((Q_L)_\xi,x\big)-E^{Q_L}\big[(\partial_\mu G)_1\big((Q_L)_\xi,\xi\big)\big],\ x\in\mathbb{R}^d,\ (Q_L)_\xi(dx)\mbox{-a.s.} $$
\end{example}

\subsection{Relation between derivative w.r.t. the density and derivative over $\mathcal{P}_2(\mathbb{R})$}
Let $(\Omega,\mathcal{F},Q)$ be the classical Wiener space: $\Omega=C_0([0,T])$ (the space of the continuous functions $\omega:[0,T]\rightarrow\mathbb{R}$ with $\omega(0)=0$), $Q$ the Wiener measure on $(\Omega,\mathcal{B}(\Omega))$,  $\mathcal{F}=\mathcal{B}(\Omega)\vee \mathcal{N}_Q$, and $B=(B_t)_{t\in[0,T]}$ the coordinate process, $B_t(\omega)=\omega(t),\, \omega\in\Omega,\, t\in[0,T]$. Remark that $B=(B_t)_{t\in[0,T]}$ is a Brownian motion under $Q$.
Moreover, by $\mathbb{F}=\mathbb{F}^B\vee \mathcal{N}_Q$ we denote the filtration generated by $B$ and completed by all $Q$-null sets.

Let $d\ge 1.$ We fix arbitrarily $\xi\in L^4(\Omega,\mathcal{F},Q;\mathbb{R}^d)$. Then, for all $L\in\mathcal{L}^Q\cap L^2(\Omega,\mathcal{F},Q)$, the probability law $(Q_L)_\xi$ belongs to $\mathcal{P}_2(\mathbb{R}^d)$. Indeed,
$$ \int_{\mathbb{R}^d}|x|^2(Q_L)_\xi(dx)=E^Q[L|\xi|^2]\leq \big(E^Q[L^2]\big)^{\frac{1}{2}}\big(E^Q[|\xi|^4]\big)^{\frac{1}{2}}<+\infty. $$

Let $f:\mathcal{P}_2(\mathbb{R}^d)\rightarrow\mathbb{R}$ be a continuously differentiable function, $\Lambda\subset\mathbb{R}^m$ a connected subset, and $\Lambda\ni\lambda\rightarrow L^\lambda\in \mathcal{L}^Q\cap L^2(\Omega,\mathcal{F},Q)$ continuously $L^2(Q)$-differentiable. We put $Q^\lambda=L^\lambda Q$. The question which raises is what one can say about the differentiability of $\Lambda\ni\lambda\rightarrow f(Q^\lambda_\xi)$ over $\Lambda$, and what about the derivative $\partial_\lambda f(Q^\lambda_\xi)$. We will see that, although not supposing a priori the differentiability of $F_Q(L)=f\big((LQ)_\xi\big)$ w.r.t. the density $L\in\mathcal{L}^Q$, this differentiability w.r.t. the density will be strongly related, but unlike that studied in Subsection 3.1, it will be the $L^2(Q)$-derivative of $F_Q:\mathcal{L}^Q\cap L^2(\Omega,\mathcal{F},Q)\rightarrow\mathbb{R}$.

The approach we have made in Subsection 3.1 for the differentiation of functions $\mathcal{L}^Q\ni L\rightarrow F_Q(L)=f\big((LQ)_\xi\big)$ in $L^1(Q)$-sense can be translated in a direct way to the differentiation in $L^2(Q)$-sense of functions  $\mathcal{L}^Q\cap L^2(\Omega,\mathcal{F},Q)\ni L\rightarrow F_Q(L)=f\big((LQ)_\xi\big)$. For this reason we present the corresponding results without proofs:

Let $\xi\in L^4(\Omega,\mathcal{F},Q;\mathbb{R}^d)$ ($d\geq1$),  $f:\mathcal{P}_2(\mathbb{R}^d)\rightarrow \mathbb{R}$ and $F_Q(L):=f\big((LQ)_\xi\big),\, L\in\mathcal{L}^Q\cap L^2(\Omega,\mathcal{F},Q)$.
\begin{definition}\label{def3.3}
We say that the function $F_Q:\mathcal{L}^Q\cap L^2(\Omega,\mathcal{F},Q)\rightarrow \mathbb{R}$ is $L^2(Q)$-differentiable in $L\in\mathcal{L}^Q\cap L^2(\Omega,\mathcal{F},Q)$, if there exists some $(DF_Q)(L)\in L(L^2_0(\Omega,\mathcal{F},Q);\mathbb{R})$ such that
\begin{equation}\label{def3a}
F_{Q}(L')-F_Q(L)=(DF_Q)(L)(L'-L)+o(|L'-L|_{L^2(Q)}),
\end{equation}
for $L'\in\mathcal{L}^Q\cap L^2(\Omega,\mathcal{F},Q)\mbox{ with }|L'-L|_{L^2(Q)}\rightarrow 0$. Recall from the beginning of Subsection 3.1 that $L^2_0(\Omega,\mathcal{F}, Q)$ is the subspace of all zero-mean random variables in $L^2(\Omega,\mathcal{F},Q)$.
\end{definition}
With similar arguments as those in Subsection 3.1 it can be shown that, if $F_Q$ is $L^2(Q)$-differentiable, $(DF_Q)(L)\in L(L^2_0(\Omega,\mathcal{F},Q);\mathbb{R})$ is unique. Due to the Riesz Representation Theorem $(DF_Q)(L)\in L(L^2_0(\Omega,\mathcal{F},Q);\mathbb{R})$ can be identified with a unique $DF_Q(L)\in L^2_0(\Omega,\mathcal{F},Q)$. Moreover, there is a Borel function $g:\mathbb{R}^d\rightarrow\mathbb{R}$ such that, $Q$-a.s., $DF_Q(L)=g(\xi)$, and $g$ depends on $(Q,L,\xi)$ only through the laws $(LQ)_\xi$ and $Q_\xi$. In particular, when we use instead of $Q$ the probability $Q_L:=LQ$ and compute the derivative $DF_{Q_L}(1)$, we have, in analogy to Subsection 3.1, but under the condition that $L$ is bounded from above by a constant and bounded from below by a constant strictly greater than zero,
\begin{equation}\label{**4}
\begin{split}
DF_{Q_L}(1)&=DF_Q(L)-E^{Q_L}[DF_Q(L)],\ Q_L\mbox{-a.s.}\, (\sim Q\mbox{-a.s.}),\\
DF_Q(L)&=DF_{Q_L}(1)-E^Q[DF_{Q_L}(1)],\ Q\mbox{-a.s.},
\end{split}
\end{equation}
(Indeed, remark that, if $0<c\le L\le C\in\mathbb{R}_+$, then $L^2(\Omega,\mathcal{F},Q_L)=L^2(\Omega,\mathcal{F},Q)$ and the $L^2$-norms of the both spaces are equivalent) and the Borel function $g:\mathbb{R}^d\rightarrow\mathbb{R}$ with $DF_{Q_L}(1)=g(\xi)$, $Q$-a.s., depends on $(Q, L,\xi)$ only through $(Q_L)_\xi$. This allows to use also here the notation
\begin{equation}\label{*5}
\partial_1 F\big((Q_L)_\xi,x\big):=g(x),\, x\in\mathbb{R}^d.
\end{equation}

\begin{lemma}\label{Lemjing}
Let $\xi\in L^4(\Omega,\mathcal{F},Q;\mathbb{R}^d)$ and $f:\mathcal{P}(\mathbb{R}^d)\rightarrow\mathbb{R}$ be such that
$$ F_Q(L')=f\big((L'Q)_{\xi}\big),\, L'\in\mathcal{L}^Q, $$
is differentiable at $L\in\mathcal{L}^Q\cap L^2(\Omega,\mathcal{F},Q)$. Then the restriction $F_Q:\mathcal{L}^Q\cap L^2(\Omega,\mathcal{F},Q)\rightarrow\mathbb{R}$ is $L^2(Q)$-differentiable in $L$, and the derivative and the $L^2(Q)$-derivative of $F_Q$ at $L$ coincide.
\end{lemma}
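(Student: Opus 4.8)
The point is that on a probability space the $L^1$-norm is dominated by the $L^2$-norm, so the $L^1$-Taylor expansion of $F_Q$ is automatically an $L^2$-Taylor expansion once the increments of the density are measured in $L^2(Q)$. First I would record what the hypothesis provides: since $F_Q:\mathcal{L}^Q\to\mathbb{R}$ is differentiable at $L$ in the sense of Definition \ref{Def3.1}, Lemma \ref{Lemma1}(ii) identifies $(DF_Q)(L)\in L(L^1_0(\Omega,\mathcal{F},Q);\mathbb{R})$ with a ($Q$-a.s.) unique $DF_Q(L)\in L^\infty_0(\Omega,\mathcal{F},Q)$ such that
\begin{equation*}
F_Q(L')-F_Q(L)=E^Q\big[DF_Q(L)(L'-L)\big]+o\big(|L'-L|_{L^1(Q)}\big),
\end{equation*}
for all $L'\in\mathcal{L}^Q$ with $|L'-L|_{L^1(Q)}\to0$. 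Because $Q$ is a probability measure, $L^\infty_0(\Omega,\mathcal{F},Q)\subset L^2_0(\Omega,\mathcal{F},Q)$, and by Cauchy--Schwarz the map $\eta\mapsto E^Q[DF_Q(L)\eta]$ is a bounded linear functional on $L^2_0(\Omega,\mathcal{F},Q)$, i.e.\ an element of $L(L^2_0(\Omega,\mathcal{F},Q);\mathbb{R})$; this is the natural candidate for the $L^2(Q)$-derivative of $F_Q$ at $L$.

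Next I would reuse the elementary estimate already carried out in the excerpt just before Lemma \ref{Lemma3.4}. For $L'\in\mathcal{L}^Q\cap L^2(\Omega,\mathcal{F},Q)$ one has $|L'-L|_{L^1(Q)}\le|L'-L|_{L^2(Q)}$ (Jensen's inequality, or Cauchy--Schwarz together with $E^Q[1]=1$). Hence $|L'-L|_{L^2(Q)}\to0$ forces $|L'-L|_{L^1(Q)}\to0$, so the expansion above applies to such $L'$; setting
\begin{equation*}
\alpha(L'):=F_Q(L')-F_Q(L)-E^Q\big[DF_Q(L)(L'-L)\big]=o\big(|L'-L|_{L^1(Q)}\big),
\end{equation*}
we obtain
\begin{equation*}
\frac{|\alpha(L')|}{|L'-L|_{L^2(Q)}}\le\frac{|\alpha(L')|}{|L'-L|_{L^1(Q)}}\longrightarrow0
\quad\text{as }|L'-L|_{L^2(Q)}\to0,
\end{equation*}
that is, $\alpha(L')=o(|L'-L|_{L^2(Q)})$ as well. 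Consequently $F_Q:\mathcal{L}^Q\cap L^2(\Omega,\mathcal{F},Q)\to\mathbb{R}$ satisfies (\ref{def3a}) at $L$ with the continuous linear functional $\eta\mapsto E^Q[DF_Q(L)\eta]$ on $L^2_0(\Omega,\mathcal{F},Q)$, so it is $L^2(Q)$-differentiable at $L$, and its $L^2(Q)$-derivative is represented by the same $DF_Q(L)\in L^\infty_0\subset L^2_0$. By the uniqueness of the $L^2(Q)$-derivative noted right after Definition \ref{def3.3}, the derivative and the $L^2(Q)$-derivative of $F_Q$ at $L$ then coincide.

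I do not expect a genuine obstacle here; the proof is soft, the whole content being the norm domination $|\cdot|_{L^1(Q)}\le|\cdot|_{L^2(Q)}$ on the probability space $(\Omega,\mathcal{F},Q)$ together with the observation that the representing random variable furnished by the $L^1$-theory already lies in $L^2_0$. The only two points that deserve a line of care are the direction of this norm inequality (valid precisely because $Q$ is a probability measure) and the remark that restricting the admissible increments from $\mathcal{L}^Q$ to $\mathcal{L}^Q\cap L^2(\Omega,\mathcal{F},Q)$ merely shrinks the family over which the little-$o$ statement must hold, and hence cannot destroy it.
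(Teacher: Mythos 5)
Your proposal is correct and follows essentially the same route as the paper's proof: identify the $L^1$-derivative with an element of $L^\infty_0(\Omega,\mathcal{F},Q)\subset L^2_0(\Omega,\mathcal{F},Q)$, invoke $|L'-L|_{L^1(Q)}\le|L'-L|_{L^2(Q)}$, and upgrade the remainder via the ratio $\frac{|R|}{|L'-L|_{L^2(Q)}}\le\frac{|R|}{|L'-L|_{L^1(Q)}}\to0$. The only cosmetic difference is that the paper writes the derivative as $(\partial_L F)\big(Q_\xi,(LQ)_\xi,\xi\big)$ using Remark \ref{Re3.2}, whereas you keep the notation $DF_Q(L)$.
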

\begin{proof}
Let $F:\mathcal{L}^Q\rightarrow\mathbb{R}$ be differentiable in $L\in\mathcal{L}^Q\cap L^2(\Omega,\mathcal{F},Q)$. From Remark \ref{Re3.2} to Theorem \ref{P3} we have for the derivative $DF_Q(L)=\partial_L F\big(Q_\xi,(LQ)_{\xi},\xi\big)\in L_0^\infty(\Omega,\mathcal{F},Q)$. Consider $L'\in\mathcal{L}^Q\cap L^2(\Omega,\mathcal{F},Q)$ with $|L'-L|_{L^2(Q)}\rightarrow0$, then, as $|L'-L|_{L^1(Q)}\leq |L'-L|_{L^2(Q)}\rightarrow0$,
$$ F_Q(L')-F_Q(L)=E^Q\big[(\partial_L F)\big(Q_\xi,(LQ)_{\xi},\xi\big)(L'-L)\big]+R(L',L), $$
where $R(L',L)=o(|L'-L|_{L^1(Q)})$.

Note that
$$ \frac{|R(L',L)|}{|L'-L|_{L^2(Q)}}\leq \frac{|R(L',L)|}{|L'-L|_{L^1(Q)}}\rightarrow0,\ \mbox{ as }(|L'-L|_{L^1(Q)}\leq)|L'-L|_{L^2(Q)}\rightarrow0.  $$
Hence,
$$ F_Q(L')-F_Q(L)=E^Q\big[(\partial_L F)\big(Q_\xi,(LQ)_{\xi},\xi\big)(L'-L)\big]+o(|L'-L|_{L^2(Q)}), $$
i.e., $F_Q:\mathcal{L}^Q\cap L^2(\Omega,\mathcal{F},Q)\rightarrow\mathbb{R}$ is $L^2(Q)$-differentiable and has the derivative $(\partial_L F)\big(Q_\xi,(LQ)_{\xi},\xi\big)\in L_0^\infty(\Omega,\mathcal{F},Q)\subset L^2(\Omega,\mathcal{F},Q)$.
\end{proof}

In order to get a first idea about the answer to the question raised above concerning the differentiability of $\Lambda\ni\lambda\rightarrow f(Q^\lambda_\xi)\big(=f\big((L^\lambda Q)_\xi\big)\big)$ when $f:\mathcal{P}_2(\mathbb{R}^d)\rightarrow \mathbb{R}$ is differentiable, let us consider the following example.

\begin{example}\label{Example3.3}
Let $h\in C^1(\mathbb{R})$ and $\varphi\in C_b^1(\mathbb{R})$, and consider the function $f(\mu):=h\big(\int_{\mathbb{R}}\varphi d\mu\big),\ \mu\in\mathcal{P}_2(\mathbb{R})$. Obviously, $f:\mathcal{P}_2(\mathbb{R})\rightarrow\mathbb{R}$ is differentiable in P.L. Lions' sense (See the recall in Section 2), and
$$ \partial_\mu f(\mu,x)=h'\Big(\int_{\mathbb{R}}\varphi d\mu\Big)\varphi'(x),\ (\mu,x)\in\mathcal{P}_2(\mathbb{R})\times\mathbb{R}. $$
Let $\xi\in L^4(\Omega,\mathcal{F},Q)$ and $\Lambda\ni\lambda\rightarrow L^\lambda\in \mathcal{L}^Q\cap L^2(\Omega,\mathcal{F},Q)$ be continuously $L^2(Q)$-differentiable. We have that $f(Q^\lambda_\xi)=h\big(E^Q[L^\lambda \varphi(\xi)]\big)$, and
$$ \partial_\lambda f(Q^\lambda_\xi)=h'\big(E^Q\big[L^\lambda \varphi(\xi)\big]\big)E^Q\big[\varphi(\xi)\partial_\lambda L^\lambda\big]=E^Q\Big[h'\Big(\int_{\mathbb{R}}\varphi dQ^\lambda_\xi\Big)\varphi(\xi)\partial_\lambda L^\lambda\Big]. $$
Since $\lambda\rightarrow L^\lambda$ is $L^2(Q)$-differentiable, $E^Q[\partial_\lambda L^\lambda]=\partial_\lambda E^Q[L^\lambda]=0$. It follows that
\begin{equation*}
\begin{split}
\partial_\lambda f(Q^\lambda_\xi)&=E^Q\Big[h'\Big(\int_{\mathbb{R}}\varphi dQ^\lambda_\xi\Big)\big(\varphi(\xi)-\varphi(0)\big)\partial_\lambda L^\lambda\Big]=E^Q\Big[\Big(\int_0^\xi h'\big(\int_{\mathbb{R}}\varphi dQ^\lambda_\xi\big)\varphi'(y)dy\Big)\partial_\lambda L^\lambda\Big]\\
&=E^Q\Big[\Big(\int_0^\xi \partial_\mu f(Q^\lambda_\xi,y) dy\Big)\partial_\lambda L^\lambda\Big]=E^{Q^\lambda}\Big[\Big(\int_0^\xi \partial_\mu f(Q^\lambda_\xi,y) dy\Big)\partial_\lambda [\ln L^\lambda]\Big],
\end{split}
\end{equation*}
where we have used in the latter line that $Q^\lambda\{L^\lambda=0\}=E^Q[L^\lambda \mathbf{1}_{\{L^\lambda=0\}}]=0.$

On the other hand, using the notations introduced in Subsection 3.1 and recalling Example \ref{Example3.1} with $L^\lambda=L,\, \lambda\in \Lambda$, we see that  $F_{Q_L}(L')=f\big((L'Q_L)_\xi\big)=h\big(E^{Q_L}\big[L'\varphi(\xi)\big]\big)$ is differentiable w.r.t. the density at $L'=1$, and

\begin{equation}\label{equ3.16}
\begin{split}
\partial_1 F\big((Q_L)_\xi,x\big)&=h'\Big(\int_{\mathbb{R}}\varphi d(Q_L)_\xi\Big)\Big(\varphi(x)-\int_\mathbb{R}\varphi d(Q_L)_\xi\Big)\\
&=h'\Big(\int_{\mathbb{R}}\varphi d(Q_L)_\xi\Big)\Big(\big(\varphi(x)-\varphi(0)\big)-\int_\mathbb{R}\big(\varphi-\varphi(0)\big) d(Q_L)_\xi\Big)\\
&=\int_0^x \partial_\mu f\big((Q_L)_\xi,y\big)dy-E^{Q_L}\Big[\int_0^\xi \partial_\mu f\big((Q_L)_\xi,y\big)dy\Big],\ x\in\mathbb{R}.
\end{split}
\end{equation}
Thus, differentiating \eqref{equ3.16} w.r.t. $x\in\mathbb{R}$ yields
\begin{equation}\label{equ3.17}
\partial_x(\partial_1 F)\big((Q_L)_\xi,x\big)=\partial_\mu f\big((Q_L)_\xi,x\big),\, x\in\mathbb{R},
\end{equation}
i.e., compared with the derivative w.r.t. the density, the derivative w.r.t. the probability measure can be regarded as a second order derivative. Also note that for $x\in\mathbb{R}^d$ ($d\ge 2$) relation (\ref{equ3.17}) can be obtained by taking the gradient w.r.t. $x$ in the first line of equation (\ref{equ3.16}).
\end{example}
In the following computations which we consider first the case of 1-dimensional random variables $\xi$, and we will see that relation (\ref{equ3.17}) holds true also in the general case. We begin our discussion for the case of dimension $d=1$ for a random variable $\xi\in L^4(\Omega,\mathcal{F},Q)$.

\medskip

Let us make the following standard assumptions.

\noindent\textbf{Assumption 1.}\\
\indent 1) $\xi\in L^4(\Omega,\mathcal{F},Q)$;\\
\indent 2) $\Lambda\ni\lambda\rightarrow L^\lambda\in \mathcal{L}^Q\cap L^2(\Omega,\mathcal{F},Q)$ continuously $L^2(Q)$-differentiable;\\
\indent 3) $f:\mathcal{P}_2(\mathbb{R})\rightarrow\mathbb{R}$ is continuously differentiable, and there is some continuity modulus $\rho:\mathbb{R}_+\rightarrow$ \\
\indent \ \ \ \ $\mathbb{R}_+$ continuous, increasing, concave with $\rho(0)=0$ such that\\
\indent \qquad\qquad i) $|f(\mu)-f(\mu')|\leq \rho\big(W_1(\mu,\mu')\big),\ \mu,\ \mu'\in\mathcal{P}_2(\mathbb{R})$;\\
\indent \qquad\qquad ii) $ |\partial_\mu f(\mu,y)-\partial_\mu f(\mu',y)|\leq \rho\big(W_1(\mu,\mu')\big),\ \mu,\ \mu'\in\mathcal{P}_2(\mathbb{R}),\ y\in\mathbb{R}. $\\
\indent \ \ \ \ Moreover, there is some constant $C\in\mathbb{R}_+$ such that \\
\indent \qquad\qquad iii) $|\partial_\mu f(\mu,y)|\le C(1+W_1(\mu,\delta_0)+|y|),\ \mu\in\mathcal{P}_2(\mathbb{R}),\ y\in\mathbb{R}.$

However, to simplify the computations, we will use instead of Assumption 1-3) the following:\\
\textbf{Assumption 2.} Let the Assumption 1-1) and 1-2) be satisfied. Moreover, let $f:\mathcal{P}_2(\mathbb{R})\rightarrow\mathbb{R}$ be continuously differentiable, such that for some constant  $C\in\mathbb{R}_+$:\\
\indent \qquad\qquad i) $|f(\mu)-f(\mu')|\leq CW_1(\mu,\mu'),\ \mu,\ \mu'\in\mathcal{P}_2(\mathbb{R})$;\\
\indent \qquad\qquad ii) $ |\partial_\mu f(\mu,y)-\partial_\mu f(\mu',y)|\leq CW_1(\mu,\mu'),\ \mu,\ \mu'\in\mathcal{P}_2(\mathbb{R}),\ y\in\mathbb{R}$;\\
\indent \qquad\qquad iii) $|\partial_\mu f(\mu,y)|\leq C,\ \mu\in\mathcal{P}_2(\mathbb{R}),\ y\in\mathbb{R}$.

\medskip

We have the following result suggested by Example \ref{Example3.3}.

\begin{theorem}\label{Theorem3.1}
Under Assumption 1, with the notation $Q^\lambda:=L^\lambda Q,\ \lambda\in\Lambda$, we have that the function $\Lambda\ni\lambda\rightarrow f(Q^\lambda_\xi)$ is differentiable, and
\begin{equation*}
\begin{split}
\partial_\lambda f(Q^\lambda_\xi)&=E^Q\Big[\Big(\int_0^\xi\partial_\mu f(Q^\lambda_\xi,y)dy\Big)\partial_\lambda L^\lambda\Big]=E^{Q^\lambda}\Big[\Big(\int_0^\xi\partial_\mu f(Q^\lambda_\xi,y)dy\Big)\partial_\lambda[\ln L^\lambda]\Big].
\end{split}
\end{equation*}
\end{theorem}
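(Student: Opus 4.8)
The strategy is the one sketched after \eqref{eq1d}: first prove the identity for ``smooth'' data --- $\xi$ a bounded smooth Wiener functional and $L^\lambda=\mathcal{E}\big(\int_0^\cdot\gamma^\lambda_s\,dB_s\big)_T$ the Dol\'eans--Dade exponential of a family $(\gamma^\lambda)$ of smooth adapted step processes depending smoothly on $\lambda$ --- and then pass to the general case via the approximation of Proposition \ref{lem2}. Before that, note that the two right-hand sides coincide: since $L^\lambda>0$ $Q$-a.s. we have $\partial_\lambda L^\lambda=L^\lambda\,\partial_\lambda[\ln L^\lambda]$, and since $Q^\lambda\{L^\lambda=0\}=E^Q[L^\lambda\mathbf{1}_{\{L^\lambda=0\}}]=0$ this gives $E^Q[\,\cdot\;\partial_\lambda L^\lambda]=E^{Q^\lambda}[\,\cdot\;\partial_\lambda[\ln L^\lambda]]$; hence it suffices to establish the first equality. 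As in the paper, I would carry out the computations under Assumption 2 (the growth/Lipschitz constants being uniform) and recover Assumption 1 afterwards by a routine truncation and mollification of $f$, using the $W_1$-continuity bounds i)--iii).

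\textbf{The smooth case.} By the Girsanov theorem, under $Q^\lambda$ the process $B^\lambda_t:=B_t-\int_0^t\gamma^\lambda_s\,ds$ is a Brownian motion; rewriting $B$ as the solution of $dB_s=dB^\lambda_s+\gamma^\lambda_s\,ds$, the law $Q^\lambda_\xi$ equals the $Q$-law of $\theta^\lambda:=\xi(X^\lambda)$, where $X^\lambda$ is the strong solution on $(\Omega,\mathcal{F},Q)$ of $dX^\lambda_s=dB_s+\gamma^\lambda_s(X^\lambda)\,ds$, $X^\lambda_0=0$ (a recursively solvable equation because $\gamma^\lambda$ is of step form). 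Since $(\gamma^\lambda)$ is smooth in $\lambda$, standard SDE stability estimates show that $\lambda\mapsto X^\lambda$, hence $\lambda\mapsto\theta^\lambda$, is $L^2(Q)$-differentiable, with $\theta^{\lambda'}-\theta^\lambda\to0$ in $L^2(Q)$ and $|\theta^{\lambda'}-\theta^\lambda|_{L^2(Q)}=O(|\lambda'-\lambda|)$. Writing the differentiability of $f$ in the sense recalled in Section 2 at $Q^\lambda_\xi=Q_{\theta^\lambda}$ along the coupling $(\theta^\lambda,\theta^{\lambda'})$ (both driven by the same $B$), dividing by $\lambda'-\lambda$ and using these estimates yields that $\lambda\mapsto f(Q^\lambda_\xi)$ is differentiable with
\begin{equation*}
\partial_\lambda f(Q^\lambda_\xi)=E^Q\big[\partial_\mu f(Q^\lambda_\xi,\theta^\lambda)\,\partial_\lambda\theta^\lambda\big].
\end{equation*}

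\textbf{Integration by parts.} Fix $\lambda$ and set $\Psi(x):=\int_0^x\partial_\mu f(Q^\lambda_\xi,y)\,dy$; by Assumption 2 iii), $\Psi\in C^1(\mathbb{R})$ has bounded derivative $\Psi'=\partial_\mu f(Q^\lambda_\xi,\cdot)$. Then $\partial_\mu f(Q^\lambda_\xi,\theta^\lambda)\,\partial_\lambda\theta^\lambda=\partial_{\lambda'}\big[\Psi(\theta^{\lambda'})\big]\big|_{\lambda'=\lambda}$; interchanging $\partial_{\lambda'}$ and $E^Q$ (legitimate by the $L^2$-bounds above and the boundedness of $\Psi'$), and using $E^Q[\Psi(\theta^{\lambda'})]=\int_{\mathbb{R}}\Psi\,dQ^{\lambda'}_\xi=E^{Q^{\lambda'}}[\Psi(\xi)]=E^Q[L^{\lambda'}\Psi(\xi)]$ valid for every $\lambda'$, we obtain
\begin{equation*}
\partial_\lambda f(Q^\lambda_\xi)=\partial_{\lambda'}E^Q\big[\Psi(\theta^{\lambda'})\big]\big|_{\lambda'=\lambda}=\partial_{\lambda'}E^Q\big[L^{\lambda'}\Psi(\xi)\big]\big|_{\lambda'=\lambda}=E^Q\big[\Psi(\xi)\,\partial_\lambda L^\lambda\big],
\end{equation*}
the last step because $\lambda\mapsto L^\lambda$ is $L^2(Q)$-differentiable and $\Psi(\xi)\in L^2(Q)$. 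This is precisely the asserted formula. Equivalently, and more in the spirit of the example around \eqref{eq1d}, the passage $E^Q[\partial_\mu f(Q^\lambda_\xi,\theta^\lambda)\partial_\lambda\theta^\lambda]=E^Q[\Psi(\xi)\partial_\lambda L^\lambda]$ is the Malliavin integration-by-parts formula: it transfers the directional Malliavin derivative $\langle D(\Psi(\xi)),\cdot\rangle$ onto the associated divergence/Skorohod integral, which under $Q^\lambda$ is exactly $\partial_\lambda[\ln L^\lambda]$; handling this Skorohod integral for the random, non-Cameron--Martin shift $\gamma^\lambda$ is where the Girsanov-in-Malliavin-calculus techniques of \cite{B94} enter.

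\textbf{Approximation; main obstacle.} For general $\xi\in L^4$ and a continuously $L^2(Q)$-differentiable curve $\lambda\mapsto L^\lambda\in\mathcal{L}^Q\cap L^2$, Proposition \ref{lem2} provides smooth $\xi^n$ and smooth adapted step processes $\gamma^{\lambda,n}$ such that, with $L^{\lambda,n}:=\mathcal{E}\big(\int_0^\cdot\gamma^{\lambda,n}_s\,dB_s\big)_T$, one has $\xi^n\to\xi$, $L^{\lambda,n}\to L^\lambda$ and $\partial_\lambda L^{\lambda,n}\to\partial_\lambda L^\lambda$ in the relevant norms, locally uniformly in $\lambda$. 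Applying the smooth-case identity to $(\xi^n,L^{\lambda,n})$ and letting $n\to\infty$, the right-hand side converges to $E^Q\big[(\int_0^\xi\partial_\mu f(Q^\lambda_\xi,y)\,dy)\,\partial_\lambda L^\lambda\big]$ by the Lipschitz dependence of $f$ and $\partial_\mu f$ on the measure (Assumption 1 i)--ii)) together with $W_1(Q^{\lambda,n}_{\xi^n},Q^\lambda_\xi)\to0$. The main obstacle is to pass to the limit on the left: one must show that $\lambda\mapsto f(Q^\lambda_\xi)$ is differentiable and that $\partial_\lambda f(Q^{\lambda,n}_{\xi^n})\to\partial_\lambda f(Q^\lambda_\xi)$, i.e. that the convergence of the derivatives is \emph{uniform in $\lambda$} on compact subsets of $\Lambda$ so that $\lim_n\partial_\lambda(\cdot)=\partial_\lambda\lim_n(\cdot)$ --- this is exactly the delicate content of Proposition \ref{lem2}, whose proof is deferred to the Appendix.
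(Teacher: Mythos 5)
Your overall architecture --- reduce to the smooth case via a Girsanov flow, then approximate using Proposition \ref{lem2} --- is the paper's. Within the smooth case, however, your ``integration by parts'' is genuinely different and simpler. The paper parametrises the passage from $\mathcal{E}_t^{\lambda}$ to $\mathcal{E}_t^{\lambda'}$ by the flow $T_s^{\lambda,\lambda'}$, differentiates in $s$ (Lemma \ref{Lemma3.5bis}), and then invokes the duality between the Malliavin derivative $D^\lambda$ and the It\^o integral w.r.t. $B^\lambda$ to convert $E^{Q_t^\lambda}\big[\int_0^t(\partial_\mu f)\,D_s^\lambda[\xi](\gamma_s^{\lambda'}-\gamma_s^\lambda)\mathcal{E}_s^{\lambda,\lambda'}ds\big]$ into $E^{Q_t^\lambda}\big[\big\{\int_0^\xi\partial_\mu f\,dy\big\}(\mathcal{E}_t^{\lambda,\lambda'}-1)\big]$, with a separate estimate of the remainder $R_{\lambda,\lambda'}$. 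You instead note that $E^Q[\Psi(\theta^{\lambda'})]=E^Q[L^{\lambda'}\Psi(\xi)]$ holds identically in $\lambda'$ and differentiate this scalar identity twice, once through the $L^2$-differentiable curve $\lambda'\mapsto\theta^{\lambda'}$ and once through $\lambda'\mapsto L^{\lambda'}$. This bypasses the Malliavin duality altogether and is a legitimate shortcut in dimension one; the only price is that the chain rule $\partial_{\lambda'}E^Q[\Psi(\theta^{\lambda'})]\big|_{\lambda'=\lambda}=E^Q[\Psi'(\theta^{\lambda})\partial_\lambda\theta^{\lambda}]$ now concerns a \emph{deterministic} $C^1$ function composed with a curve of random variables, so you need $\Psi'=\partial_\mu f(Q^\lambda_\xi,\cdot)$ to be continuous in the state variable --- a hypothesis not literally contained in Assumptions 1--2, though used implicitly elsewhere in the paper (e.g. in Theorem \ref{Theorem3.2}).

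The gap is in the approximation step. You propose to apply the derivative identity at each $n$ and let $n\to\infty$, asserting that the required uniform-in-$\lambda$ convergence of $\partial_\lambda f(Q^{\lambda,n}_{\xi^n})$ ``is exactly the delicate content of Proposition \ref{lem2}''. It is not: Proposition \ref{lem2} gives convergence of $(\mathcal{E}_1^{\lambda(\cdot),n},\partial_\lambda\mathcal{E}_1^{\lambda(\cdot),n})$ in $L^2(dsdQ)$ along segments $[\lambda,\lambda']$, an $L^2$-in-$s$ statement that yields no uniform control of the derivatives, and hence does not by itself justify $\lim_n\partial_\lambda(\cdot)=\partial_\lambda\lim_n(\cdot)$. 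The paper's way around this is Corollary \ref{coro1}: integrate the smooth-case identity over the segment to obtain the finite-increment form $f\big((\mathcal{E}^{\lambda',n}Q)_{\xi^n}\big)-f\big((\mathcal{E}^{\lambda,n}Q)_{\xi^n}\big)=(\lambda'-\lambda)\int_0^1E^Q\big[\vartheta_n(s)\,\partial_\lambda\mathcal{E}^{\lambda(s),n}\big]ds$; pass to the limit there, which requires only pointwise convergence of the left-hand side (via the $W_1$-Lipschitz bound on $f$) and $L^2(dsdQ)$-convergence of the integrand (exactly what Proposition \ref{lem2} b) supplies); and only then let $\lambda'\to\lambda$ in the resulting identity \eqref{equ3.30} to recover differentiability at $\lambda$. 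Your plan is fixable by inserting this ``integrate, pass to the limit, then differentiate'' device, but as written the limit passage on the left-hand side does not go through.
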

As a direct consequence we obtain
\begin{theorem}\label{Theorem3.2}
Under Assumption 1, the function $F_Q(L):=f\big((LQ)_\xi\big),\, L\in\mathcal{L}^Q\cap L^2(\Omega,\mathcal{F},Q)$ is continuously $L^2(Q)$-differentiable,
$$DF_Q(L)=\int_0^\xi\partial_\mu f\big((LQ)_\xi,y\big)dy-E^Q\Big[\int_0^\xi\partial_\mu f\big((LQ)_\xi,y\big)dy\Big],\, Q\mbox{-a.s.},\, L\in\mathcal{L}^Q\cap L^2(\Omega,\mathcal{F},Q),$$
and, for the derivative at $L'=1$ of the function $L'\rightarrow F_{Q_L}(L')=f\big((L'Q_L)_\xi\big)\big(=f\big((L'LQ)_\xi\big)\big),\, L'\in \mathcal{L}^{Q_L}\cap L^2(\Omega,\mathcal{F},Q_L)$,
\begin{equation*}
\begin{split}
DF_{Q_L}(1)&=\displaystyle \int_0^\xi\partial_\mu f\big((Q_L)_\xi,y\big)dy-E^{Q_L}\big[\int_0^\xi\partial_\mu f\big((Q_L)_\xi,y\big)dy\big],\, Q_L\mbox{-a.s., i.e.},\\
\partial_1 F\big((Q_L)_\xi,x\big)&=\displaystyle \int_0^x\partial_\mu f\big((Q_L)_\xi,y\big)dy-E^{Q_L}\big[\int_0^\xi\partial_\mu f\big((Q_L)_\xi,y\big)dy\big],\, (Q_L)_\xi(dx)\mbox{-a.s.},\ x\in\mathbb{R}.
\end{split}
\end{equation*}
Moreover, $\partial_1 F\big((Q_L)_\xi,\cdot\big):\mathbb{R}\rightarrow \mathbb{R}$ is continuously differentiable, and
\begin{equation}\label{3.19bis}
\partial_x(\partial_1 F)\big((Q_L)_\xi,x\big)=\partial_\mu f\big((Q_L)_\xi,x\big),\, x\in\mathbb{R}.
\end{equation}
\end{theorem}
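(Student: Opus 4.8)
The plan is to read the statement off from Theorem \ref{Theorem3.1} by running it along the line segment joining $L$ to $L'$ and then integrating. Fix $L\in\mathcal{L}^Q\cap L^2(\Omega,\mathcal{F},Q)$ and, for $L'\in\mathcal{L}^Q\cap L^2(\Omega,\mathcal{F},Q)$, set $L^\lambda:=(1-\lambda)L+\lambda L'$ for $\lambda\in\Lambda:=[0,1]$. Since $L,L'>0$ and $E^Q[L^\lambda]=1$, each $L^\lambda$ lies in $\mathcal{L}^Q\cap L^2(\Omega,\mathcal{F},Q)$, and $\lambda\mapsto L^\lambda$ is (trivially) continuously $L^2(Q)$-differentiable with $\partial_\lambda L^\lambda=L'-L$; hence Assumption 1 holds for this curve and Theorem \ref{Theorem3.1} gives that $\lambda\mapsto F_Q(L^\lambda)=f\big((L^\lambda Q)_\xi\big)$ is differentiable with $\partial_\lambda F_Q(L^\lambda)=E^Q\big[\big(\int_0^\xi\partial_\mu f((L^\lambda Q)_\xi,y)\,dy\big)(L'-L)\big]$. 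A basic ingredient I would isolate first is the Lipschitz estimate
$$ W_1\big((L^\lambda Q)_\xi,(L^{\lambda'}Q)_\xi\big)\le|\lambda-\lambda'|\,|L'-L|_{L^2(Q)}\,|\xi|_{L^2(Q)},\qquad\lambda,\lambda'\in[0,1], $$
which follows from the Kantorovich--Rubinstein duality: for a $1$-Lipschitz $\phi$ one may subtract $\phi(0)$ without changing $\int\phi\,d\big((L^\lambda Q)_\xi-(L^{\lambda'}Q)_\xi\big)=(\lambda-\lambda')E^Q[(L'-L)\phi(\xi)]$, so this is bounded by $|\lambda-\lambda'|\,E^Q[|L'-L|\,|\xi|]\le|\lambda-\lambda'|\,|L'-L|_{L^2(Q)}|\xi|_{L^2(Q)}$.

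Next, combining this $W_1$-continuity of $\lambda\mapsto(L^\lambda Q)_\xi$ with the modulus bound Assumption 1-3) ii) on $\partial_\mu f$ in its measure argument, and with the growth bound Assumption 1-3) iii), which together with $\xi\in L^4$ produces a $\lambda$-uniform $L^1(Q)$-majorant (of the form $C(|\xi|+|\xi|^2)|L'-L|$, using $\sup_\lambda W_1((L^\lambda Q)_\xi,\delta_0)<\infty$), dominated convergence makes $\lambda\mapsto\partial_\lambda F_Q(L^\lambda)$ continuous on $[0,1]$, so
$$ F_Q(L')-F_Q(L)=\int_0^1 E^Q\Big[\Big(\int_0^\xi\partial_\mu f\big((L^\lambda Q)_\xi,y\big)dy\Big)(L'-L)\Big]d\lambda. $$
Subtracting the candidate linear term $E^Q\big[\big(\int_0^\xi\partial_\mu f((LQ)_\xi,y)dy\big)(L'-L)\big]$ (the integrand at $\lambda=0$) and using Assumption 1-3) ii) together with $W_1((L^\lambda Q)_\xi,(LQ)_\xi)\le\lambda|L'-L|_{L^2(Q)}|\xi|_{L^2(Q)}$, the remainder is bounded in modulus by $\rho\big(|L'-L|_{L^2(Q)}|\xi|_{L^2(Q)}\big)\,|\xi|_{L^2(Q)}\,|L'-L|_{L^2(Q)}=o(|L'-L|_{L^2(Q)})$. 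This proves $F_Q:\mathcal{L}^Q\cap L^2(\Omega,\mathcal{F},Q)\to\mathbb{R}$ is $L^2(Q)$-differentiable at $L$ with the announced $DF_Q(L)$; the growth bound iii) and $\xi\in L^4$ give $\int_0^\xi\partial_\mu f((LQ)_\xi,y)dy\in L^2(Q)$, so $DF_Q(L)\in L^2_0(\Omega,\mathcal{F},Q)$. Continuity of $L\mapsto DF_Q(L)$ in $L^2(Q)$ follows the same way: if $L_n\to L$ in $L^2(Q)$ then $W_1((L_nQ)_\xi,(LQ)_\xi)\le|L_n-L|_{L^2(Q)}|\xi|_{L^2(Q)}\to0$, whence $|DF_Q(L_n)-DF_Q(L)|_{L^2(Q)}\le2\,|\xi|_{L^2(Q)}\,\rho\big(|L_n-L|_{L^2(Q)}|\xi|_{L^2(Q)}\big)\to0$. (Under the simplified Assumption 2, $\rho$ is linear and $\partial_\mu f$ bounded, so all these bounds are immediate, the remainder being $O(|L'-L|_{L^2(Q)}^2)$.)

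For the statement at $L'=1$ I would invoke the identities \eqref{**4} (valid, as in the discussion preceding Lemma \ref{Lemjing}, when $L$ is bounded above and below by positive constants, so that $L^2(\Omega,\mathcal{F},Q_L)=L^2(\Omega,\mathcal{F},Q)$ with equivalent norms) and plug in the formula just obtained: writing $A:=\int_0^\xi\partial_\mu f((LQ)_\xi,y)dy$ one has $DF_Q(L)=A-E^Q[A]$, hence $DF_{Q_L}(1)=DF_Q(L)-E^{Q_L}[DF_Q(L)]=A-E^{Q_L}[A]$ since the constant $E^Q[A]$ cancels; with $(LQ)_\xi=(Q_L)_\xi$ this is exactly the claimed $DF_{Q_L}(1)$, and since it equals $g(\xi)$ for $g(x):=\int_0^x\partial_\mu f((Q_L)_\xi,y)dy-E^{Q_L}\big[\int_0^\xi\partial_\mu f((Q_L)_\xi,y)dy\big]$, the notation \eqref{*5} gives the asserted expression for $\partial_1 F\big((Q_L)_\xi,x\big)$. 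Finally, for \eqref{3.19bis} one just differentiates this explicit formula in $x$: since $f$ is continuously differentiable, $y\mapsto\partial_\mu f((Q_L)_\xi,y)$ is continuous, so $x\mapsto\int_0^x\partial_\mu f((Q_L)_\xi,y)dy$ is of class $C^1$ with derivative $\partial_\mu f((Q_L)_\xi,x)$, the subtracted $x$-independent constant being irrelevant.

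I expect no serious obstacle in this chain — it is indeed a direct consequence of Theorem \ref{Theorem3.1} — the only points needing care being (i) the justification of the fundamental theorem of calculus in $\lambda$, i.e. producing the $\lambda$-uniform integrable majorant from Assumption 1-3) iii) and $\xi\in L^4$ and checking continuity of $\lambda\mapsto\partial_\lambda F_Q(L^\lambda)$, and (ii) the $W_1$-Lipschitz estimate that converts the modulus $\rho$ of Assumption 1 into the $o(|L'-L|_{L^2(Q)})$ remainder; both are routine and become trivial under Assumption 2. The identity \eqref{3.19bis} is stated here only for $d=1$; for $\mathbb{R}^d$-valued $\xi$ with $d\ge2$ one reads off $\partial_x(\partial_1F)((Q_L)_\xi,x)=\partial_\mu f((Q_L)_\xi,x)$ by taking the gradient in $x$ of the corresponding integral representation, which is established separately in the sequel.
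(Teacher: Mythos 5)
Your proposal follows essentially the same route as the paper: parametrise the segment $L^s=(1-s)L+sL'$, apply Theorem \ref{Theorem3.1} along it, use the Kantorovich--Rubinstein duality to bound $W_1\big((L^sQ)_\xi,(LQ)_\xi\big)$ by $E^Q[|\xi||L^s-L|]$ and hence show the remainder is $o(|L'-L|_{L^2(Q)})$, and then pass to $DF_{Q_L}(1)$ and $\partial_1F$ via \eqref{**4} and \eqref{*5}. The only difference is that you carry the general modulus $\rho$ of Assumption 1 through the estimates (and correctly flag the boundedness caveat for \eqref{**4}), whereas the paper works under Assumption 2 and declares the generalisation standard; the argument is otherwise identical and correct.
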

\begin{proof} (of Theorem \ref{Theorem3.2}). For simplicity of the argument we use Assumption 2. The generalisation from Assumption 2 to Assumption 1 is a bit more technical, but uses only standard arguments.
	
Let us fix any $L\in\mathcal{L}^Q\cap L^2(\Omega,\mathcal{F},Q)$. Given any other $L'\in\mathcal{L}^Q\cap L^2(\Omega,\mathcal{F},Q)$ we put $\lambda(s):=s,\, s\in[0,1]$, and $L^s(=L^{\lambda(s)}):=(1-s)L+sL'\in\mathcal{L}^Q\cap L^2(\Omega,\mathcal{F},Q),\,\, s\in[0,1]$, and we remark that the mapping $s\rightarrow(L^s,\partial_sL^s)=\big(L+s(L'-L),L'-L\big)$ is $L^2(Q)$-continuous. Then, as Assumption 2 is satisfied, it follows from Theorem \ref{Theorem3.1} that
\begin{equation}\label{a1}
\begin{split}
&f\big((L'Q)_\xi\big)-f\big((LQ)_\xi\big)=\int_0^1\partial_s f\big((L^sQ)_\xi\big)ds\\
&=E^Q\Big[\int_0^1\Big(\int_0^\xi\partial_\mu f\big((L^sQ)_\xi,y\big)dy \Big)\partial_sL^sds\Big]
=E^Q\Big[\int_0^1\Big(\int_0^\xi\partial_\mu f\big((L^sQ)_\xi,y\big)dy \Big)ds(L'-L)\Big]\\
&=E^Q\Big[\Big(\int_0^\xi\partial_\mu f\big((LQ)_\xi,y\big)dy \Big)(L'-L)\Big]+R_{L,L'},
\end{split}
\end{equation}
where $$R_{L,L'}:=E^Q\Big[\Big(\int_0^1\int_0^\xi\big[\partial_\mu f\big((L^sQ)_\xi,y\big)-\partial_\mu f\big((LQ)_\xi,y\big)\big]dyds \Big)(L'-L)\Big].$$
But from Assumption 2 we have
$$\big|\partial_\mu f\big((L^sQ)_\xi,y\big)-\partial_\mu f\big((LQ)_\xi,y\big) \big|\le CW_1\big((L^sQ)_\xi,(LQ)_\xi\big)\le CE^Q[|\xi||L^s-L|].$$
This latter estimate is an immediate consequence of the Kantorovich-Rubinstein duality. Indeed, for all $h\in \mbox{Lip}_1(\mathbb{R})$ (the space of real Lipschitz continuous functions defined over $\mathbb{R}$ with Lipschitz constant $1$) with $h(0)=0$ (Observe that this implies $|h(x)|\le |x|,\, x\in\mathbb{R}$), we have
$$\displaystyle\int_{\mathbb{R}}hd(L^sQ)_\xi-\int_{\mathbb{R}}hd(LQ)_\xi=E^Q[h(\xi)(L^s-L)]\le E^Q[|\xi||L^s-L|].$$
Consequently,
\begin{equation}\label{a13}
\begin{split}
|R_{L,L'}|&\le\displaystyle E^Q\Big[\Big|\int_0^1\int_0^\xi C E^Q[|\xi||L^s-L|]dyds \Big||L'-L|\Big]\le C\big(E^Q[|\xi||L'-L|]\big)^2\\
&\le CE^Q[|\xi|^2]E^Q[|L'-L|^2].
\end{split}
\end{equation}
As $E^Q[L'-L]=0$, (\ref{a1}) yields
\begin{equation}\begin{array}{ll}\label{a11}
&F_Q(L')-F_Q(L)=f\big((L'Q)_\xi\big)-f\big((LQ)_\xi\big)\\
&=\displaystyle E^Q\Big[\Big(\int_0^\xi\partial_\mu f\big((LQ)_\xi,y\big)dy \Big)(L'-L)\Big]+R_{L,L'}\\
&=\displaystyle E^Q\Big[\Big(\int_0^\xi\partial_\mu f\big((LQ)_\xi,y\big)dy-E^Q\Big[\int_0^\xi\partial_\mu f\big((LQ)_\xi,y\big)dy\Big] \Big)(L'-L)\Big]+R_{L,L'},
\end{array}\end{equation}
for all $L'\in\mathcal{L}^Q\cap L^2(\Omega,\mathcal{F},Q)$. Notice that
$$\int_0^\xi\partial_\mu f\big((LQ)_\xi,y\big)dy-E^Q\big[\int_0^\xi\partial_\mu f\big((LQ)_\xi,y\big)dy\big]\in L^2_0(\Omega,\mathcal{F},Q).$$
Thus, recalling (\ref{a13}) we conclude that
$$ DF_Q(L)=\int_0^\xi\partial_\mu f\big((LQ)_\xi,y\big)dy-E^Q\Big[\int_0^\xi\partial_\mu f\big((LQ)_\xi,y\big)dy\Big],\,  Q\mbox{-a.s.},\, L\in\mathcal{L}^Q\cap L^2(\Omega,\mathcal{F},Q). $$
Now, as $W_1\big((L'Q)_\xi,(LQ)_\xi\big)\leq E^Q[|\xi||L'-L|]$, it follows from above that
$$ E^Q\big[|DF_Q(L')-DF_Q(L)|^2\big]\leq C\big(E^Q[|\xi|^2]\big)^2E^Q[|L'-L|^2],\, L,L'\in \mathcal{L}^Q\cap L^2(\Omega,\mathcal{F},Q).$$
Furthermore, from \eqref{**4} and \eqref{*5} the remaining parts of Theorem \ref{Theorem3.2} follow now easily.
\end{proof}

Let us prove Theorem \ref{Theorem3.1} first for a special case. For simplicity we fix $T=1$. We suppose:\\
\textbf{Assumption 3.} Let $n\geq1$, $0=t_0<t_1<...<t_n=1$, $\Delta_i:=(t_{i-1},t_i]$, $B(\Delta_i):=B_{t_i}-B_{t_{i-1}}$.\\
\indent i) $\xi$ is a smooth Wiener functional of the form:
$$\xi=\varphi\big(B(\Delta_1),...,B(\Delta_n)\big),\ \varphi\in C_b^\infty(\mathbb{R}^n);$$
\indent ii) $\gamma^\lambda$ is a smooth Wiener step process of the form:
$$\gamma_t^\lambda=\sum_{i=1}^n\varphi_{i}^{\lambda}\big(B(\Delta_1),...,B(\Delta_{i-1})\big)I_{\Delta_i}(t),$$
\indent \ \ \ \ where $\varphi_i:\Lambda\times\mathbb{R}^{i-1}\rightarrow\mathbb{R}$ is a bounded Borel function, such that:\\
\indent iia) $\varphi_{i}^{\lambda}:\mathbb{R}^{i-1}\rightarrow\mathbb{R}$ is of class $C^\infty$ and all derivatives of all order are bounded over $\Lambda\times\mathbb{R}^{i-1}$,\\
\indent \ \ \ \ \  $1\leq i\leq n$, and\\
\indent iib) $\Lambda\ni\lambda\rightarrow\gamma^\lambda\in L^2_{\mathbb{F}}([0,1]\times\Omega,dsdQ)$ is continuously $L^2(dsdQ)$-differentiable.

\medskip

For $t\in[0,1]$ and $\lambda\in\Lambda$ we introduce the Dol\'{e}an-Dade exponential
$$\mathcal{E}_t^\lambda:=\exp\Big\{\int_0^t\gamma^{\lambda}_sdB_s-\frac{1}{2}\int_0^t|\gamma^{\lambda}_s|^2ds\Big\},$$
and we note that $\mathcal{E}_t^\lambda\in \mathcal{L}^Q\cap L^{\infty,-}(Q)$, where $L^{\infty,-}(Q):=\cap_{1<p<+\infty}L^p(Q)$. Moreover, we use the following notations: $Q_t^\lambda:=\mathcal{E}_t^\lambda Q$ and $(Q_t^\lambda)_{\xi}=(\mathcal{E}_t^\lambda Q)_{\xi}\in\mathcal{P}_2(\mathbb{R})$.

\begin{proposition}\label{lem1}
Under the Assumptions 2 and 3 the result stated in Theorem \ref{Theorem3.1} holds true, i.e., $\Lambda\ni\lambda\rightarrow f\big((Q_t^\lambda)_\xi\big)$ is differentiable and
$$ \partial_\lambda f\big((Q_t^\lambda)_\xi\big)=E^Q\Big[\Big(\int_0^\xi\partial_\mu f\big((Q_t^\lambda)_\xi,y\big)dy\Big)\partial_\lambda[\mathcal{E}_t^\lambda]\Big].$$
\end{proposition}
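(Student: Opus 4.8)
The plan is to eliminate the density $\mathcal{E}_t^\lambda$ by a Girsanov change of measure, which turns $(Q_t^\lambda)_\xi$ into the $Q$-law of an explicit bounded smooth Wiener functional $\zeta^\lambda$, then to apply the chain rule for P.-L. Lions' derivative along the curve $\lambda\mapsto\zeta^\lambda$, and finally to bring the resulting expression into the announced form by a Malliavin-type integration-by-parts argument, carried out here via the change-of-variables identity $E^Q[g(\zeta^{\lambda'})]=E^Q[g(\xi)\mathcal{E}_t^{\lambda'}]$. \emph{Step 1 (Girsanov reduction).} By the Girsanov theorem, $W_s:=B_s-\int_0^{s\wedge t}\gamma_r^\lambda\,dr$, $s\in[0,1]$, is a Brownian motion under $Q_t^\lambda$, so the increments
\[
W(\Delta_i):=W_{t_i}-W_{t_{i-1}}=B(\Delta_i)-\varphi_i^\lambda\big(B(\Delta_1),\dots,B(\Delta_{i-1})\big)\,|\Delta_i\cap[0,t]|,\quad i=1,\dots,n,
\]
are under $Q_t^\lambda$ independent centred Gaussians with variances $|\Delta_i|$, hence $(W(\Delta_1),\dots,W(\Delta_n))$ has the same law under $Q_t^\lambda$ as $(B(\Delta_1),\dots,B(\Delta_n))$ under $Q$. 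Solving this triangular system recursively for $B(\Delta_i)$ produces functions $\Psi_i^\lambda\in C^\infty(\mathbb{R}^i)$ with $B(\Delta_i)=\Psi_i^\lambda(W(\Delta_1),\dots,W(\Delta_i))$ ($Q$-a.s.); putting $h^\lambda(w):=\varphi\big(\Psi_1^\lambda(w),\dots,\Psi_n^\lambda(w)\big)$ we get $\xi=h^\lambda(W(\Delta_1),\dots,W(\Delta_n))$ a.s., and therefore
\[
(Q_t^\lambda)_\xi=Q_{\zeta^\lambda},\qquad \zeta^\lambda:=h^\lambda\big(B(\Delta_1),\dots,B(\Delta_n)\big).
\]
Since $\varphi\in C_b^\infty(\mathbb{R}^n)$ and, by Assumption 3-ii, the $\varphi_i^\lambda$ together with their spatial and $\lambda$-derivatives are suitably bounded, $\zeta^\lambda$ is a bounded smooth Wiener functional and $\Lambda\ni\lambda\mapsto\zeta^\lambda$ is continuously differentiable in every $L^p(Q)$, $1\le p<+\infty$; likewise $\lambda\mapsto\mathcal{E}_t^\lambda$ is continuously differentiable in every $L^p(Q)$, with $\partial_\lambda\mathcal{E}_t^\lambda\in L^{\infty,-}(Q)$.

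\emph{Step 2 (chain rule and integration by parts).} By Step 1, $f\big((Q_t^\lambda)_\xi\big)=\widetilde{f}(\zeta^\lambda)$ for the lift $\widetilde{f}(\vartheta):=f(Q_\vartheta)$, $\vartheta\in L^2(\Omega,\mathcal{F},Q)$. Under Assumption 2, $f$ is continuously differentiable, so $\widetilde{f}$ is continuously Fréchet differentiable with $D\widetilde{f}(\vartheta)(\eta)=E^Q[\partial_\mu f(Q_\vartheta,\vartheta)\eta]$; composing with the continuously $L^2(Q)$-differentiable curve $\lambda\mapsto\zeta^\lambda$ shows that $\lambda\mapsto f\big((Q_t^\lambda)_\xi\big)$ is differentiable and $\partial_\lambda f\big((Q_t^\lambda)_\xi\big)=E^Q\big[\partial_\mu f\big((Q_t^\lambda)_\xi,\zeta^\lambda\big)\,\partial_\lambda\zeta^\lambda\big]$. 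Now fix $\lambda$, set $\mu:=(Q_t^\lambda)_\xi$ and $g(y):=\int_0^y\partial_\mu f(\mu,z)\,dz$; by the joint continuity and (under Assumption 2) the boundedness of $\partial_\mu f$, $g\in C^1(\mathbb{R})$ has bounded derivative $g'=\partial_\mu f(\mu,\cdot)$. As $\mu$ is frozen, the boundedness of $g'$ and the continuous $L^2(Q)$-differentiability of $\lambda'\mapsto\zeta^{\lambda'}$ give $E^Q[\partial_\mu f(\mu,\zeta^\lambda)\partial_\lambda\zeta^\lambda]=\partial_{\lambda'}E^Q[g(\zeta^{\lambda'})]\big|_{\lambda'=\lambda}$, while by the definition of $(Q_t^{\lambda'})_\xi$ and Step 1,
\[
E^Q\big[g(\zeta^{\lambda'})\big]=\int_{\mathbb{R}}g\,d(Q_t^{\lambda'})_\xi=E^{Q_t^{\lambda'}}[g(\xi)]=E^Q\big[g(\xi)\,\mathcal{E}_t^{\lambda'}\big].
\]
Since $g(\xi)$ is bounded and $\lambda'\mapsto\mathcal{E}_t^{\lambda'}$ is continuously $L^2(Q)$-differentiable, differentiating at $\lambda'=\lambda$ yields $E^Q[g(\xi)\,\partial_\lambda\mathcal{E}_t^\lambda]$, and combining these identities gives $\partial_\lambda f\big((Q_t^\lambda)_\xi\big)=E^Q\big[\big(\int_0^\xi\partial_\mu f\big((Q_t^\lambda)_\xi,y\big)\,dy\big)\,\partial_\lambda\mathcal{E}_t^\lambda\big]$, as asserted.

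\emph{Main obstacle.} The technical heart is Step 1: the Girsanov identification $(Q_t^\lambda)_\xi=Q_{\zeta^\lambda}$ through the recursive inversion of the increments, and the proof that $\lambda\mapsto\zeta^\lambda$ and $\lambda\mapsto\mathcal{E}_t^\lambda$ are continuously differentiable in the relevant $L^p(Q)$-spaces, which is precisely where the smooth step-process structure of Assumption 3 is needed. A further point requiring care is that the measure argument $(Q_t^\lambda)_\xi$ itself depends on $\lambda$: in Step 2 the function $g$ must be built from the frozen measure $\mu=(Q_t^\lambda)_\xi$ \emph{before} one differentiates the auxiliary map $\lambda'\mapsto E^Q[g(\zeta^{\lambda'})]=E^Q[g(\xi)\mathcal{E}_t^{\lambda'}]$, so that only the $\zeta^{\lambda'}$- (respectively $\mathcal{E}_t^{\lambda'}$-) dependence is differentiated, and the two interchanges of $\partial_{\lambda'}$ with $E^Q$ must be justified using the uniform integrability of $\mathcal{E}_t^\lambda$, $\partial_\lambda\mathcal{E}_t^\lambda$ and the boundedness of $\partial_\mu f$ and $\xi$.
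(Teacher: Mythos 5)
Your Step 1 identification is correct: $\zeta^\lambda$ is exactly the paper's $\bar{\xi}=\xi\circ T_t^\lambda$, and $(Q_t^\lambda)_\xi=Q_{\zeta^\lambda}$ is the same Girsanov observation the paper exploits. Your Step 2 device — computing $\partial_{\lambda'}E^Q[g(\zeta^{\lambda'})]\big|_{\lambda'=\lambda}$ twice, once through the curve $\lambda'\mapsto\zeta^{\lambda'}$ and once through the tautology $E^Q[g(\zeta^{\lambda'})]=E^Q[g(\xi)\mathcal{E}_t^{\lambda'}]$ — is an attractive substitute for the paper's explicit Malliavin duality, which converts $\int_0^t(\partial_\mu f)\,D_s^\lambda[\xi](\gamma_s^{\lambda'}-\gamma_s^\lambda)\mathcal{E}_s^{\lambda,\lambda'}ds$ into $\{\int_0^\xi\partial_\mu f\,dy\}(\mathcal{E}_t^{\lambda,\lambda'}-1)$; and the half of your argument that differentiates $E^Q[g(\xi)\mathcal{E}_t^{\lambda'}]$ is fully covered by the paper's Lemma \ref{lem3.6}.

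The other half, however, rests on the assertion that $\Lambda\ni\lambda\mapsto\zeta^\lambda$ is continuously $L^2(Q)$-differentiable, and this is a genuine gap rather than a deferred routine verification. Assumption 3 supplies $\lambda$-regularity only through iib), i.e. continuous $L^2(ds\,dQ)$-differentiability of $\lambda\mapsto\gamma^\lambda$ under the reference measure $Q$; it does not give pointwise or uniformly dominated $\lambda$-derivatives of the $\varphi_i^\lambda$. Differentiating $\zeta^\lambda=h^\lambda\big(B(\Delta_1),\dots,B(\Delta_n)\big)$ forces you to differentiate the recursively composed maps $\lambda\mapsto\varphi_i^\lambda\big(\bar{B}^\lambda(\Delta_1),\dots,\bar{B}^\lambda(\Delta_{i-1})\big)$, whose inner arguments move with $\lambda$ and whose law under $Q$ is only equivalent to (not equal to) the Gaussian law in which iib) is formulated; transporting the difference-quotient convergence through this change of law via H\"older and the uniform bound on $\varphi_i^\lambda$ yields only $|\zeta^{\lambda'}-\zeta^\lambda|_{L^2(Q)}\le C|\lambda'-\lambda|^{1/q'}$ for some $q'>1$, which is already too weak to make the Fr\'echet remainder of $\widetilde{f}$ an $o(|\lambda'-\lambda|)$. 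This is precisely the difficulty the paper's proof is engineered to avoid: it differentiates along the auxiliary flow parameter $s$ of $T_s^{\lambda,\lambda'}$ (Lemma \ref{Lemma3.5bis}, which uses only the spatial smoothness in iia)), carries out the Malliavin integration by parts as an exact identity with no limit in $\lambda'$, and only at the very end invokes the $L^1(Q)$-differentiability of $\lambda\mapsto\mathcal{E}_t^\lambda$ (Lemma \ref{lem3.6}), which is all that iib) actually delivers. To salvage your route you would either need to strengthen Assumption 3 (e.g. $\lambda\mapsto\varphi_i^\lambda(x)$ of class $C^1$ with $\lambda$-derivative bounded uniformly in $(\lambda,x)$, as in the affine setting of Assumption 3' later in the section), or replace the chain rule through $\partial_\lambda\zeta^\lambda$ by a direct linearization of $f(\mu^{\lambda'})-f(\mu^\lambda)$ against $\int g\,d(\mu^{\lambda'}-\mu^\lambda)$, which in turn requires a modulus of continuity of $\partial_\mu f(\mu,\cdot)$ in its second argument that Assumption 2 does not provide.
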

The proof of Proposition \ref{lem1} is heavily based on Girsanov transformation and Malliavin calculus. Let us begin with some preliminaries. The interested reader is referred to, e.g., \cite{B94}.

Recall that $\Omega=C_0([0,1])$, and $(\Omega,\mathcal{F},Q)$
is the classical Wiener space with coordinate process $B=(B_t)_{t\in[0,1]}.$ Over $\Omega$ we define a flow of
Girsanov transformations as follows:
\begin{equation}\label{841}
\begin{split}
T^\lambda_s\omega&=\omega+\int_0^{s\wedge\cdot}\gamma^\lambda_r(T^\lambda_r\omega)dr,\ s\in[0,1],\ \omega\in\Omega,\\
A^\lambda_s\omega&=\omega-\int_0^{s\wedge\cdot}\gamma^\lambda_r(\omega)dr,\ s\in[0,1],\ \omega\in\Omega,\, \lambda\in\Lambda.
\end{split}
\end{equation}
Remark that, thanks to our Assumption 3, $T_s^\lambda,A_s^\lambda,\, s\in[0,1],$ are well defined (Indeed, $\omega\rightarrow \gamma_r^\lambda(\omega)$ is bounded and Lipschitz w.r.t. the supremum norm on $\Omega$, uniformly w.r.t. $(r,\lambda)$), $T_s^\lambda,A_s^\lambda:\Omega\rightarrow\Omega$ are bijective, and standard arguments using the fact that $\gamma^\lambda$ is adapted to the filtration generated by $B$ show that $(T_s^\lambda)^{-1}=A_s^\lambda,\ s\in[0,1]$. Moreover, for $\lambda\in\Lambda$ and $t\in[0,1]$ arbitrarily fixed we get from the Girsanov Theorem that, on one hand,  $Q_t^\lambda:=\mathcal{E}_t^{\lambda}Q=Q\circ[T_t^\lambda]^{-1}$, and on the other hand, $B^\lambda:=B(A_t^\lambda)$ is an $(\mathbb{F},Q_t^\lambda)$-Brownian motion. Furthermore, from \eqref{841} we have $\displaystyle B_s^\lambda=B_s(A_t^\lambda)=B_s-\int_0^{s\wedge t}\gamma^\lambda_r dr,\ s\in[0,1]$, and thus also
$$ B_s=B^\lambda_s+\int_0^{s\wedge t}\gamma^\lambda_r(B_{\cdot\wedge r})dr,\ s\in[0,1], $$
where we can substitute in $\gamma^\lambda$ that, due to \eqref{841}, $B=T_t^\lambda(B^\lambda)$. So we can conclude that $\mathbb{F}^{B^\lambda}=\mathbb{F}^B=\mathbb{F}$, and $\mathcal{F}_1^{B^\lambda}=\mathcal{F}_1^{B}=\mathcal{F}$ (completed under $Q$). Here $\mathbb{F}^{B^\lambda}$ denotes the filtration generated by $B^\lambda$ and completed by all $Q$-null sets. This allows to consider $B^\lambda$ as our canonical Brownian motion on $(\Omega,\mathcal{F},Q_t^\lambda)$.

\smallskip

For any random variable $\vartheta\in L^0(\Omega,\mathcal{F},Q_t^\lambda)(=L^0(\Omega,\mathcal{F},Q))$, we define
\begin{equation}\label{*6}
\bar{\vartheta}(\omega):=\vartheta(T_t^\lambda\omega),\ \omega\in\Omega,\ \mbox{ i.e., } \ \vartheta(\omega)=\bar{\vartheta}(A_t^\lambda\omega),\ \omega\in\Omega.
\end{equation}
Then, $\bar{\vartheta}(B^\lambda)=\vartheta(B)=\vartheta$. For $\lambda'\in\Lambda$ we put $\mathcal{E}_s^{\lambda,\lambda'}:=\mathcal{E}_s^{\lambda'}/\mathcal{E}_s^{\lambda},\ s\in[0,1].$ Remark that
$$ Q_t^{\lambda'}=\mathcal{E}_t^{\lambda'}Q=\mathcal{E}_t^{\lambda,\lambda'}(\mathcal{E}_t^\lambda Q)=\mathcal{E}_t^{\lambda,\lambda'}Q_t^\lambda. $$
Moreover, for all $s\in[0,t]$,
\begin{equation}\label{equ3.19}
\begin{split}
\mathcal{E}_s^{\lambda,\lambda'}&=\frac{\mathcal{E}_s^{\lambda'}}{\mathcal{E}_s^{\lambda}}=\exp\Big\{\int_0^s(\gamma^{\lambda'}_r-\gamma^{\lambda}_r)dB_r-\frac{1}{2}\int_0^s\big((\gamma^{\lambda'}_r)^2-(\gamma^{\lambda}_r)^2\big)dr\Big\}\\
&=\exp\Big\{\int_0^s(\gamma^{\lambda'}_r-\gamma^{\lambda}_r)dB^\lambda_r-\frac{1}{2}\int_0^s(\gamma^{\lambda'}_r-\gamma^{\lambda}_r)^2 dr\Big\}\\
&=\exp\Big\{\int_0^s(\bar{\gamma}^{\lambda'}_r-\bar{\gamma}^{\lambda}_r)(B^\lambda)dB^\lambda_r-\frac{1}{2}\int_0^s(\bar{\gamma}^{\lambda'}_r-\bar{\gamma}^{\lambda}_r)^2(B^\lambda) dr\Big\},
\end{split}
\end{equation}
where due to definition \eqref{*6}, $\bar{\gamma}^{\lambda'}_r=\gamma^{\lambda'}_r(T_t^\lambda),\, \lambda'\in\Lambda$. In order to characterise $\mathcal{E}_s^{\lambda,\lambda'}$ defined in (\ref{equ3.19}) as a Girsanov density w.r.t. $Q_t^\lambda$, we proceed similarly to \eqref{841} and introduce the transformation $T_s^{\lambda,\lambda'}:\Omega\rightarrow\Omega$ and its inverse $A_s^{\lambda,\lambda'}=(T^{\lambda,\lambda'}_s)^{-1}:\Omega\rightarrow\Omega$ as follows:
\begin{equation}\label{equ3.20bis}
\begin{split}
T^{\lambda,\lambda'}_s\omega&=\omega+\int_0^{s\wedge\cdot}(\bar{\gamma}^{\lambda'}_r-\bar{\gamma}^\lambda_r)(T^{\lambda,\lambda'}_r\omega)dr,\ s\in[0,t],\ \omega\in\Omega,\\
A^{\lambda,\lambda'}_s\omega&=\omega-\int_0^{s\wedge\cdot}(\bar{\gamma}^{\lambda'}_r-\bar{\gamma}^\lambda_r)(\omega)dr,\ s\in[0,t],\ \omega\in\Omega.
\end{split}
\end{equation}
We remark that, putting
$$\bar{\mathcal{E}}_s^{\lambda,\lambda'}:=\mathcal{E}_s^{\lambda,\lambda'}(T_t^\lambda)=\exp\Big\{\int_0^s(\bar{\gamma}^{\lambda'}_r-\bar{\gamma}^{\lambda}_r)(B)dB_r-\frac{1}{2}\int_0^s(\bar{\gamma}^{\lambda'}_r-\bar{\gamma}^{\lambda}_r)^2(B) dr\Big\},\, 0\leq s\leq t\leq 1,$$
as a consequence of the Girsanov Theorem we also have $Q\circ[T^{\lambda,\lambda'}_s]^{-1}=\bar{\mathcal{E}}_s^{\lambda,\lambda'}Q.$
Furthermore, recalling that $Q^\lambda_t=\mathcal{E}_t^\lambda Q=Q\circ[T_t^\lambda]^{-1}$, and \eqref{*6}, we see that, for all $\Phi\in b\mathcal{B}(\Omega)$ and $s\in[0,t]$,
\begin{equation*}
\begin{split}
&E^{Q_t^\lambda}[\mathcal{E}_s^{\lambda,\lambda'}\Phi]=E^{Q}[(\mathcal{E}_s^{\lambda,\lambda'}\Phi)(T_t^\lambda)]=E^Q[\bar{\mathcal{E}}_s^{\lambda,\lambda'}\bar{\Phi}]=E^Q[\bar{\Phi}(T^{\lambda,\lambda'}_s)]\\
&=E^Q[\mathcal{E}_t^{\lambda}\bar{\Phi}(T^{\lambda,\lambda'}_s A_t^\lambda)]=E^{Q_t^\lambda}[\bar{\Phi}(T^{\lambda,\lambda'}_s A_t^\lambda)]=E^{Q_t^\lambda}[\Phi(T_t^\lambda T^{\lambda,\lambda'}_s A_t^\lambda)].
\end{split}
\end{equation*}
This shows that
\begin{equation}\label{equ3.20}
Q_t^\lambda \circ[T_t^\lambda T^{\lambda,\lambda'}_s A_t^\lambda]^{-1}=\mathcal{E}_s^{\lambda,\lambda'}Q_t^\lambda,\ 0\leq s\leq t.
\end{equation}
This formula (\ref{equ3.20}) is very crucial in our approach, because it allows to translate the derivative w.r.t. the probability density $\mathcal{E}^{\lambda'}_t$ to that w.r.t. a probability law.

Let us begin with observing that, for all $\vartheta\in L^0(\Omega,\mathcal{F},Q)$, $0\leq s\leq t\leq 1$,
\begin{equation}\label{equ3.21}
(\mathcal{E}_s^{\lambda,\lambda'}Q_t^\lambda)_{\vartheta}=(Q_t^\lambda)_{\bar{\vartheta}\big(T_s^{\lambda,\lambda'}(B^\lambda)\big)},
\end{equation}
and for $s=t$,
\begin{equation}\label{*7}
(Q_t^{\lambda'})_{\vartheta}=(\mathcal{E}_t^{\lambda'} Q)_{\vartheta}=(\mathcal{E}_t^{\lambda,\lambda'}Q_t^\lambda)_{\vartheta}=(Q_t^\lambda)_{\bar{\vartheta}\big(T^{\lambda,\lambda'}_t(B^\lambda)\big)}.
\end{equation}
Indeed, from (\ref{equ3.20}) we get that, for all $\varphi\in b\mathcal{B}(\mathbb{R})$, and for $0\leq s\leq t\leq 1$,
\begin{equation*}
\begin{split}
E^{Q_t^\lambda}\big[\varphi\big(\bar{\vartheta}(T^{\lambda,\lambda'}_s(B^\lambda))\big)\big]&=E^{Q_t^\lambda}\big[\varphi\big(\vartheta(T_t^\lambda T^{\lambda,\lambda'}_sA_t^\lambda)\big)\big]=E^{Q_t^\lambda}[\mathcal{E}_s^{\lambda,\lambda'} \varphi(\vartheta)].
\end{split}
\end{equation*}
Thus, if $s=t$, we have $(Q_t^{\lambda'})_{\vartheta}=(\mathcal{E}_t^{\lambda'} Q)_{\vartheta}=(\mathcal{E}_t^{\lambda,\lambda'}Q_t^\lambda)_{\vartheta}=(Q_t^\lambda)_{\bar{\vartheta}\big(T^{\lambda,\lambda'}_t(B^\lambda)\big)}$.
Formula \eqref{*7} proves the following statement:
\begin{proposition}\label{Proposition3.2}
With the above notations, given any function $f:\mathcal{P}_2(\mathbb{R})\rightarrow\mathbb{R}$, we have
\begin{equation}\label{equ3.22}
\begin{split}
f\big((\mathcal{E}_t^{\lambda'}Q)_{\xi}\big)-f\big((\mathcal{E}_t^{\lambda}Q)_{\xi}\big)
&=f\big((\mathcal{E}_t^{\lambda,\lambda'}Q_t^\lambda)_{\xi}\big)-f\big((Q_t^\lambda)_{\xi}\big)\\
&=f\big((Q_t^\lambda)_{\bar{\xi}\big(T^{\lambda,\lambda'}_t(B^\lambda)\big)}\big)-f\big((Q_t^\lambda)_{\bar{\xi}(B^\lambda)}\big),\, \lambda'\in\Lambda.
\end{split}
\end{equation}
\end{proposition}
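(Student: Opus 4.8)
The plan is to deduce the statement directly from identity \eqref{*7}, which has already been established; Proposition \ref{Proposition3.2} is essentially just its reformulation in terms of the function $f$. First I would record the elementary fact that, by the definition \eqref{*6} of the bar-transform together with $B=T_t^\lambda(B^\lambda)$, one has $\bar\xi(B^\lambda)=\xi\big(T_t^\lambda(B^\lambda)\big)=\xi(B)=\xi$, so that $(Q_t^\lambda)_{\bar\xi(B^\lambda)}=(Q_t^\lambda)_\xi=(\mathcal{E}_t^\lambda Q)_\xi$.

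For the first equality in \eqref{equ3.22} I would invoke the factorisation $Q_t^{\lambda'}=\mathcal{E}_t^{\lambda'}Q=\mathcal{E}_t^{\lambda,\lambda'}(\mathcal{E}_t^\lambda Q)=\mathcal{E}_t^{\lambda,\lambda'}Q_t^\lambda$ noted just after the definition of $\mathcal{E}_s^{\lambda,\lambda'}$; pushing this law forward by $\xi$ gives $(\mathcal{E}_t^{\lambda'}Q)_\xi=(\mathcal{E}_t^{\lambda,\lambda'}Q_t^\lambda)_\xi$, while $(\mathcal{E}_t^\lambda Q)_\xi=(Q_t^\lambda)_\xi$ is the definition of $Q_t^\lambda$, and applying $f$ and subtracting yields the first ``$=$''. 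For the second equality I would apply \eqref{*7} with $\vartheta=\xi$, i.e. $(\mathcal{E}_t^{\lambda,\lambda'}Q_t^\lambda)_\xi=(Q_t^\lambda)_{\bar\xi(T_t^{\lambda,\lambda'}(B^\lambda))}$, then apply $f$ and subtract $f\big((Q_t^\lambda)_\xi\big)=f\big((Q_t^\lambda)_{\bar\xi(B^\lambda)}\big)$ as recorded in the first step.

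I do not expect any genuine obstacle here: all of the substance lies in the chain \eqref{equ3.20}--\eqref{*7}, which rests on the Girsanov theorem and on the bijectivity and flow properties of the transformations $T^\lambda,A^\lambda,T^{\lambda,\lambda'}$ guaranteed by Assumption 3 (boundedness and Lipschitz continuity, uniform in $(r,\lambda)$, of the step coefficients $\gamma_r^\lambda$). The only point worth a line of care is the well-definedness and joint measurability of $\bar\xi$ and of the composed map $T_t^\lambda T_s^{\lambda,\lambda'}A_t^\lambda$ entering \eqref{equ3.20}; once these are in place, Proposition \ref{Proposition3.2} follows immediately. Its purpose is to package \eqref{*7} in exactly the form that will be differentiated in $\lambda'$ at $\lambda'=\lambda$, and combined with the Malliavin integration-by-parts formula, in the proof of Proposition \ref{lem1}.
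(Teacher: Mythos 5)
Your proposal is correct and follows exactly the paper's own route: the Proposition is obtained by specialising the identity \eqref{*7} to $\vartheta=\xi$, together with the factorisation $\mathcal{E}_t^{\lambda'}Q=\mathcal{E}_t^{\lambda,\lambda'}Q_t^\lambda$ and the observation $\bar{\xi}(B^\lambda)=\xi$, which is precisely how the paper presents it ("Formula \eqref{*7} proves the following statement"). Nothing further is needed.
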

The importance of this proposition stems from the fact that it allows to translate the derivative w.r.t. the density of $\lambda'\rightarrow f\big((\mathcal{E}_t^{\lambda'}Q)_{\xi}\big)$ at $\lambda'=\lambda$ to a derivative w.r.t. the law of $\lambda'\rightarrow f\big((Q_t^\lambda)_{\bar{\xi}\big(T^{\lambda,\lambda'}_t(B^\lambda)\big)}\big)$.

\medskip

Before we can give the proof of Proposition \ref{lem1}, we still have to make some preparation beginning with the recall of the notion of Malliavin derivative. We restrict here to the strict necessary, and so we will work with the Malliavin derivative namely on the space of smooth Wiener functionals. We define the space $\mathcal{S}$ of smooth Wiener functionals by
\begin{equation*}
\begin{split}
\mathcal{S}:=&\big\{ \varphi\big(B(\Delta_1),...,B(\Delta_n)\big),\ n\geq1\,\big|\, 0=t_0<t_1<...<t_n=1,\\
 &\hskip 5.3cm \Delta_i=(t_{i-1},t_i],\, B(\Delta_i)=B_{t_i}-B_{t_{i-1}},\, \varphi\in C_b^\infty(\mathbb{R}^n)\big\},
\end{split}
\end{equation*}
which is a dense subset of $L^2(\Omega,\mathcal{F},Q)$. For $\xi\in\mathcal{S}$, the Malliavin derivative $D=(D_s)_{s\in[0,1]}$ is defined as follows:
$$ D_s\xi=\sum_{i=1}^n(\partial_{x_i}\varphi)\big(B(\Delta_1),..., B(\Delta_n)\big)I_{\Delta_i}(s),\ s\in[0,1]. $$
For $\xi\in\mathcal{S}$, $D\xi=(D_s\xi)_{s\in[0,1]}\in L^2(dsdQ)(:= L^2([0,1]\times\Omega,dsdQ))$.
\begin{remark}
$D:\mathcal{S}\rightarrow L^2(dsdQ)$ is a closable map, i.e., for all $F\in L^2(\Omega,\mathcal{F},Q)$ it holds that, if there exists a sequence $(F_j)_{j\geq1}\subset\mathcal{S}$, such that \\
\indent	${\rm i)}$ $F_j\rightarrow F$ in $L^2(Q)$, as $j\rightarrow+\infty$, and\\
\indent	${\rm ii)}$ $(DF_j)_{j\geq1}$ is a Cauchy sequence in $L^2(dsdQ)$, \\
	then the $L^2(dsdQ)$-limit of $(DF_j)_{j\ge 1}$ depends on $F$ but not on the special choice of $(F_j)_{j\geq1}$.
	
Consequently, for $F\in L^2(\Omega,\mathcal{F},Q)$ for which there exists such a Cauchy sequence $(F_j)_{j\ge 1}\subset\mathcal{S}$ satisfying {\rm i)} and {\rm ii)} (We write: $F\in\mathbb{D}^{1,2}(Q)$) one can define $DF:=L^2(dsdQ)$-$\lim\limits_{j\rightarrow\infty}DF_j$. This extends $D$ from $\mathcal{S}$ to $\mathbb{D}^{1,2}(Q)$. We remark that $\mathbb{D}^{1,2}(Q)\subsetneqq L^2(\Omega,\mathcal{F},Q)$.
\end{remark}
\begin{lemma}\label{Lemma3.5bis}
Under Assumption 3 we have that the mapping $s\rightarrow \bar{\xi}\big(T^{\lambda',\lambda}_s(B^\lambda)\big)$ is continuous on $[0,1]$ and differentiable in all $s\in(t_{i-1},t_i),\, 1\le i\le n$:
$$ \partial_s\big[\bar{\xi}\big(T^{\lambda',\lambda}_s(B^\lambda)\big)\big]
=(D_s\bar{\xi})\big(T_s^{\lambda,\lambda'}(B^\lambda)\big)\cdot (\bar{\gamma}^{\lambda'}_s-\bar{\gamma}^\lambda_s)\big(T_s^{\lambda, \lambda'}(B^\lambda)\big).$$
\end{lemma}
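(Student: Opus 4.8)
This is a pathwise chain-rule statement, and the plan is: (i) to show that, under Assumption~3, the Girsanov flow $T_t^\lambda$ transforms the finitely based smooth data $\xi,\gamma^\lambda,\gamma^{\lambda'}$ into data of the same type attached to the same partition $(t_i)$; (ii) to solve the defining equation \eqref{equ3.20bis} for $Y_s:=T_s^{\lambda,\lambda'}(B^\lambda)$ recursively on the intervals $\Delta_i$ and read off the $s$-dependence of the increments of $Y_s$; and (iii) to conclude by the ordinary chain rule. (On the left-hand side of the asserted identity we read $T^{\lambda',\lambda}_s$ as the flow $T_s^{\lambda,\lambda'}$ of \eqref{equ3.20bis}.)

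\emph{Step (i).} Write $\xi=\varphi\big(B(\Delta_1),\dots,B(\Delta_n)\big)$ with $\varphi\in C_b^\infty(\mathbb{R}^n)$. Since $\gamma^\lambda$ is adapted, the ODE \eqref{841} defining $T_t^\lambda$ can be solved successively on $\Delta_1,\Delta_2,\dots$, and an immediate induction shows that $(T_t^\lambda\omega)(t_k)-(T_t^\lambda\omega)(t_{k-1})=B(\Delta_k)(\omega)+g_k\big(B(\Delta_1)(\omega),\dots,B(\Delta_{k-1})(\omega)\big)$ with each $g_k$ bounded and of class $C_b^\infty$ (here one uses that the $\varphi_i^\lambda$ are bounded with bounded derivatives of all orders). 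Consequently $\bar\xi=\xi\circ T_t^\lambda=\bar\varphi\big(B(\Delta_1),\dots,B(\Delta_n)\big)$ for some $\bar\varphi\in C_b^\infty(\mathbb{R}^n)$, so $\bar\xi\in\mathcal S$ and, for $s\in\Delta_i$, $(D_s\bar\xi)(\psi)=(\partial_{x_i}\bar\varphi)\big(B(\Delta_1)(\psi),\dots,B(\Delta_n)(\psi)\big)$ for \emph{every} path $\psi$; this path-continuous representative of $D\bar\xi$ is exactly what makes the expression $(D_s\bar\xi)\big(T_s^{\lambda,\lambda'}(B^\lambda)\big)$ meaningful. The same induction applied to $\gamma^{\lambda'}$ shows that $\bar\gamma_r^\lambda=\gamma_r^\lambda(T_t^\lambda)$ and $\bar\gamma_r^{\lambda'}=\gamma_r^{\lambda'}(T_t^\lambda)$ are, for $r\in\Delta_i$, bounded $C_b^\infty$ functions of $\big(B(\Delta_1),\dots,B(\Delta_{i-1})\big)$; in particular $\bar\eta:=\bar\gamma^{\lambda'}-\bar\gamma^\lambda$ is again a bounded smooth step process for the partition $(t_i)$.

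\emph{Steps (ii)--(iii).} By \eqref{equ3.20bis}, $Y_s:=T_s^{\lambda,\lambda'}(B^\lambda)$ solves $Y_s=B^\lambda+\int_0^{s\wedge\cdot}\bar\eta_r(Y_r)\,dr$; solving this recursively on $\Delta_1,\Delta_2,\dots$ (legitimate since $\bar\eta$ is adapted and bounded, just as for \eqref{841}) one finds that, for $r\in\Delta_i$, $\bar\eta_r(Y_r)$ depends on $Y_r$ only through $Y_r|_{[0,t_{i-1}]}$, and that $Y_r|_{[0,t_{i-1}]}=Y_{t_{i-1}}|_{[0,t_{i-1}]}$ for all $r\ge t_{i-1}$; hence $r\mapsto\bar\eta_r(Y_r)$ is constant on $\Delta_i$, the map $s\mapsto Y_s$ is Lipschitz in the supremum norm, hence continuous on $[0,1]$, and, from $Y_s(u)=B^\lambda(u)+\int_0^{s\wedge u}\bar\eta_r(Y_r)\,dr$, for $s$ in the open interval $(t_{i-1},t_i)$ one checks that $B(\Delta_k)(Y_s)$ is independent of $s$ when $k\neq i$, whereas $B(\Delta_i)(Y_s)=B(\Delta_i)(B^\lambda)+\int_{t_{i-1}}^s\bar\eta_r(Y_r)\,dr$, so that $\partial_s\big[B(\Delta_i)(Y_s)\big]=\bar\eta_s(Y_s)$. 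Setting $g(s):=\bar\xi(Y_s)=\bar\varphi\big(B(\Delta_1)(Y_s),\dots,B(\Delta_n)(Y_s)\big)$, continuity of $g$ on $[0,1]$ follows from the continuity of $s\mapsto Y_s$, of $\psi\mapsto B(\Delta_k)(\psi)$ and of $\bar\varphi$, while for $s\in(t_{i-1},t_i)$ the finite-dimensional chain rule gives
\begin{equation*}
\begin{split}
g'(s)&=(\partial_{x_i}\bar\varphi)\big(B(\Delta_1)(Y_s),\dots,B(\Delta_n)(Y_s)\big)\,\bar\eta_s(Y_s)\\
&=(D_s\bar\xi)\big(T_s^{\lambda,\lambda'}(B^\lambda)\big)\cdot\big(\bar\gamma_s^{\lambda'}-\bar\gamma_s^\lambda\big)\big(T_s^{\lambda,\lambda'}(B^\lambda)\big),
\end{split}
\end{equation*}
which is the claimed identity.

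\emph{Main obstacle.} The substance lies entirely in Step~(i) and in the recursive solution of the flow: one must carefully verify that $T_t^\lambda$ preserves the ``finitely based and smooth'' structure — so that $\bar\xi\in\mathcal S$ with a path-continuous version of $D\bar\xi$, and $\bar\eta$ is still a bounded smooth step process — and that \eqref{equ3.20bis} can be solved interval by interval with the stabilisation property $Y_r|_{[0,t_{i-1}]}=Y_{t_{i-1}}|_{[0,t_{i-1}]}$ for $r\ge t_{i-1}$. Once these structural facts are in place the differentiation is elementary, and it is the jumps of the step process $\bar\eta$ at the points $t_i$ that force the differentiability statement to be restricted to the open intervals $(t_{i-1},t_i)$.
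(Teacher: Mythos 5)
Your proposal is correct and follows essentially the same route as the paper: establish that the Girsanov flow preserves the ``finitely based smooth'' structure, so that $\bar\xi=\widehat\psi\big(B(\Delta_1),\dots,B(\Delta_n)\big)\in\mathcal S$ and the shifted increments are $B(\Delta_k)+\int_{[0,s]\cap\Delta_k}(\bar\gamma^{\lambda'}_r-\bar\gamma^\lambda_r)(T_r^{\lambda,\lambda'})\,dr$, then differentiate in $s$ on each open subinterval via the finite-dimensional chain rule and identify the resulting partial derivative with $D_s\bar\xi$. Your additional remarks on the stabilisation of $Y_r|_{[0,t_{i-1}]}$ and the path-continuous representative of $D\bar\xi$ only make explicit what the paper uses implicitly.
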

\begin{proof} (of Lemma \ref{Lemma3.5bis}).
Recall that, due to Assumption 3, for some $\psi\in C_b^\infty(\mathbb{R}^n)$,
$\xi=\psi\big(B(\Delta_1),\dots,$ $B(\Delta_n)\big)$. On the other hand, from (\ref{841}) we see that
$$ B(T_s^\lambda)(\Delta_i)=B(\Delta_i)+\int_{[0,s]\cap\Delta_i}\varphi_{i}^ {\lambda}\big(B(T_r^\lambda)(\Delta_1),\dots, B(T_r^\lambda)(\Delta_{i-1})\big)dr,\, s\in[0,1],\, 1\le i\le n, $$
from where we get that, for some $\Phi\in C^\infty_{\ell,b}(\mathbb{R}^n;\mathbb{R}^n)$ (the space of smooth functions from $\mathbb{R}^n$ to $\mathbb{R}^n$ with at most linear growth and bounded derivatives of all order greater or equal to the first one),
$$\big(B(T_t^\lambda)(\Delta_1),\dots,B(T_t^\lambda)(\Delta_n)\big)=\Phi\big(B(\Delta_1),\dots,B(\Delta_n)\big).$$
Thus,
\begin{equation}\label{psibar}
\bar{\xi}=\xi(T_t^\lambda)=\psi\big(B(T_t^\lambda)(\Delta_1),\dots,B(T_t^\lambda)(\Delta_n)\big)=\widehat{\psi}\big(B(\Delta_1),\dots,B(\Delta_n)\big)\in\mathcal{S},
\end{equation}
for $\widehat{\psi}=\psi\circ\Phi\in C_b^\infty(\mathbb{R}^n)$. Now, with a similar argument, we deduce from (\ref{equ3.20bis})
\begin{equation*}
\begin{split}
&\bar{\xi}\big(T_s^{\lambda,\lambda'}(B^\lambda)\big)=\bar{\xi}(T_s^{\lambda,\lambda'}\circ A^\lambda_t)\\
&=\widehat{\psi}\big(B(T_s^{\lambda,\lambda'})(\Delta_1),\dots,B(T_s^{\lambda,\lambda'})(\Delta_n)\big)\circ A_t^\lambda\\
&=\widehat{\psi}\bigg(B(\Delta_1)+\int_{[0,s]\cap\Delta_1}(\bar{\gamma}^{\lambda'}_r-\bar{\gamma}_r^\lambda)(T^{\lambda',\lambda}_r)dr,\dots, B(\Delta_n)+\int_{[0,s]\cap\Delta_n}(\bar{\gamma}^{\lambda'}_r-\bar{\gamma}_r^\lambda)(T^{\lambda',\lambda}_r)dr \bigg)\circ A_t^\lambda.
\end{split}
\end{equation*}
We observe that the latter expression is differentiable w.r.t. $s\in (t_{i-1},t_i),\, 1\leq i\leq n$, and that, using the definition of the Malliavin derivative for smooth Wiener functionals we get
\begin{equation*}
\begin{split}
&\partial_s\big[\bar{\xi}\big(T^{\lambda',\lambda}_s(B^\lambda)\big)\big]\\
&=\Big\{\sum_{i=1}^n(\partial_{x_i}\widehat{\psi})
\Big(B(\Delta_1)+\int_{[0,s]\cap\Delta_1}(\bar{\gamma}^{\lambda'}_r-\bar{\gamma}_r^\lambda)(T^{\lambda',\lambda}_r)dr,\dots,\\
&\hskip 3cm B(\Delta_n)+\int_{[0,s]\cap\Delta_n}(\bar{\gamma}^{\lambda'}_r-\bar{\gamma}_r^\lambda)(T^{\lambda',\lambda}_r)dr \Big)
\cdot(\bar{\gamma}^{\lambda'}_s-\bar{\gamma}^\lambda_s)(T_s^{\lambda,\lambda'})I_{\Delta_i}(s)\Big\}\circ A^\lambda_t\\
&=\big\{(D_s\bar{\xi})(T_s^{\lambda,\lambda'})\cdot(\bar{\gamma}^{\lambda'}_s-\bar{\gamma}^\lambda_s)(T_s^{\lambda,\lambda'})\big\}\circ A_t^\lambda\\
&=(D_s\bar{\xi})\big(T_s^{\lambda,\lambda'}(B^\lambda)\big)\cdot(\bar{\gamma}^{\lambda'}_s-\bar{\gamma}^\lambda_s)\big(T_s^{\lambda,\lambda'}(B^\lambda)\big).
\end{split}
\end{equation*}
\end{proof}
Let us come now to the proof of Proposition \ref{lem1}.
\begin{proof} (of Proposition \ref{lem1}).
Let  $f:\mathcal{P}_2(\mathbb{R})\rightarrow\mathbb{R}$ satisfy Assumption 2. Then, applying Proposition \ref{Proposition3.2}, we get
\begin{equation}\label{equ3.23}
\begin{split}
&f\big((\mathcal{E}_t^{\lambda'}Q)_{\xi}\big)-f\big((\mathcal{E}_t^{\lambda}Q)_{\xi}\big)=f\big((Q_t^\lambda)_{\bar{\xi}\big(T^{\lambda,\lambda'}_t(B^\lambda)\big)}\big)-f\big((Q_t^\lambda)_{\bar{\xi}(B^\lambda)}\big)
=\int_0^t\partial_s\big[f\big((Q_t^\lambda)_{\bar{\xi}\big(T^{\lambda,\lambda'}_s(B^\lambda)\big)}\big)\big]ds\\
=&\int_0^t E^{Q_t^\lambda}\Big[(\partial_\mu f)\Big((Q_t^\lambda)_{\bar{\xi}\big(T^{\lambda,\lambda'}_s(B^\lambda)\big)},\bar{\xi}\big(T^{\lambda,\lambda'}_s(B^\lambda)\big)\Big)\partial_s\big[\bar{\xi}\big(T^{\lambda,\lambda'}_s(B^\lambda)\big)\big]\Big]ds.
\end{split}
\end{equation}
Indeed, the latter equality follows from the fact that the function $f$ lifted to $L^2(\Omega,\mathcal{F},Q^\lambda_t)$, $\widetilde{f}:L^2(\Omega,\mathcal{F},Q^\lambda_t)\rightarrow \mathbb{R}$ is Fr\'{e}chet differentiable. This Fr\'echet differentiability is a consequence of the definition of the differentiability of $f:\mathcal{P}_2(\mathbb{R})\rightarrow\mathbb{R}$ combined with Lemma \ref{LemA.1} in the Appendix. Thus,
$$ f\big((Q_t^\lambda)_{\bar{\xi}\big(T^{\lambda,\lambda'}_t(B^\lambda)\big)}\big)-f\big((Q_t^\lambda)_{\bar{\xi}(B^\lambda)}\big)
=\widetilde{f}\big(\bar{\xi}\big(T^{\lambda,\lambda'}_t(B^\lambda)\big) \big)
-\widetilde{f}\big(\bar{\xi}(B^\lambda)\big),$$
and as $s\rightarrow \bar{\xi}\big(T^{\lambda,\lambda'}_s(B^\lambda)\big)$ is differentiable w.r.t. $s\in(t_{i-1},t_i),\, 1\le i\le n$, with $
\partial_s\big[\bar{\xi}\big(T^{\lambda',\lambda}_s(B^\lambda)\big)\big]$ $
=(D_s\bar{\xi})\big(T_s^{\lambda,\lambda'}(B^\lambda)\big)\cdot (\bar{\gamma}^{\lambda'}_s-\bar{\gamma}^\lambda_s)\big(T_s^{\lambda, \lambda'}(B^\lambda)\big)$ (Recall Lemma \ref{Lemma3.5bis}) and $\bar{\xi}\in\mathcal{S}$, $s\rightarrow \bar{\xi}\big(T^{\lambda,\lambda'}_s(B^\lambda)\big)$ is also
$L^2(Q_t^\lambda)$-differentiable. Hence, $s\rightarrow \widetilde{f}\big(\bar{\xi}\big(T^{\lambda,\lambda'}_s(B^\lambda)\big)\big)\in L^2(Q_t^\lambda)$ is differentiable, and
\begin{equation*}
\begin{split}
\partial_s\widetilde{f}\big(\bar{\xi}\big(T^{\lambda,\lambda'}_s(B^\lambda)\big)\big)&
=D\widetilde{f}\big(\bar{\xi}\big(T^{\lambda,\lambda'}_s (B^\lambda)\big)\big)\big(\partial_s\big[\bar{\xi}\big(T^{\lambda,\lambda'}_s(B^\lambda)\big)\big]\big)\\
&=E^{Q_t^\lambda}\Big[(\partial_\mu f)\Big((Q_t^\lambda)_{\bar{\xi}\big(T^{\lambda,\lambda'}_s(B^\lambda)\big)},\bar{\xi}\big(T^{\lambda,\lambda'}_s(B^\lambda)\big)\Big)\partial_s\big[\bar{\xi}\big(T^{\lambda,\lambda'}_s(B^\lambda)\big)\big] \Big].
\end{split}
\end{equation*}
Hence, combining the above relations we obtain (\ref{equ3.23}). Moreover, using Lemma \ref{Lemma3.5bis} we see that

\begin{equation*}
\begin{split}
&f\big((\mathcal{E}_t^{\lambda'}Q)_{\xi}\big)-f\big((\mathcal{E}_t^{\lambda}Q)_{\xi}\big)\\
&=\int_0^t E^{Q_t^\lambda}\Big[(\partial_\mu f)\Big((Q_t^\lambda)_{\bar{\xi}\big(T^{\lambda,\lambda'}_s(B^\lambda)\big)},\bar{\xi}\big(T^{\lambda,\lambda'}_s(B^\lambda)\big)\Big)\partial_s\big[\bar{\xi}\big(T^{\lambda,\lambda'}_s(B^\lambda)\big)\big]\Big]ds\\
&=\int_0^t E^{Q_t^\lambda}\Big[(\partial_\mu f)\Big((Q_t^\lambda)_{\bar{\xi}\big(T^{\lambda,\lambda'}_s(B^\lambda)\big)},\bar{\xi}\big(T^{\lambda,\lambda'}_s(B^\lambda)\big)\Big)(D_s\bar{\xi})\big(T_s^{\lambda,\lambda'}(B^\lambda)\big)(\bar{\gamma}_s^{\lambda'}-\bar{\gamma}_s^\lambda)\big(T_s^{\lambda,\lambda'}(B^\lambda)\big)\Big]ds.\\
\end{split}
\end{equation*}
We recall that $Q\circ[T^{\lambda,\lambda'}_s]^{-1} =\bar{\mathcal{E}}_s^{\lambda,\lambda'}Q$, and we observe that, for all $\vartheta\in L_{+}^0(\Omega,\mathcal{F},Q)$,
$$ E^{Q_t^\lambda}\big[\bar{\vartheta}\big(T^{\lambda,\lambda'}_s(B^\lambda)\big)\big]=E^{Q_t^\lambda}\big[\bar{\vartheta}(T^{\lambda,\lambda'}_s \circ A_t^\lambda)\big]=E^{Q}\big[\bar{\vartheta}(T^{\lambda, \lambda'}_s)\big]=E^Q[\bar{\mathcal{E}}_s^{\lambda,\lambda'}\bar{\vartheta}]. $$
Then we get from the above relation
\begin{equation*}
\begin{split}
&f\big((\mathcal{E}_t^{\lambda'}Q)_{\xi}\big)-f\big((\mathcal{E}_t^{\lambda}Q)_{\xi}\big)\\
&=\int_0^t E^{Q_t^\lambda}\Big[(\partial_\mu f)\Big((Q_t^\lambda)_{\bar{\xi}\big(T^{\lambda,\lambda'}_s(B^\lambda)\big)},\bar{\xi}\big(T^{\lambda,\lambda'}_s(B^\lambda)\big)\Big)(D_s\bar{\xi})\big(T_s^{\lambda,\lambda'}(B^\lambda)\big)(\bar{\gamma}_s^{\lambda'}-\bar{\gamma}_s^\lambda)\big(T_s^{\lambda,\lambda'}(B^\lambda)\big)\Big]ds\\
&=\int_0^t E^{Q}\Big[(\partial_\mu f)\Big((Q_t^\lambda)_{\bar{\xi}\big(T^{\lambda,\lambda'}_s(B^\lambda)\big)},\bar{\xi}\Big)D_s[\bar{\xi}](\bar{\gamma}_s^{\lambda'}-\bar{\gamma}_s^\lambda)\cdot\bar{\mathcal{E}}_s^{\lambda,\lambda'}\Big]ds\\
&=I_{\lambda,\lambda'}+R_{\lambda,\lambda'},
\end{split}
\end{equation*}
where we have put $\displaystyle I_{\lambda,\lambda'}=E^{Q}\Big[\int_0^t(\partial_\mu f)\Big((Q_t^\lambda)_{\xi},\bar{\xi}\Big)D_s[\bar{\xi}](\bar{\gamma}_s^{\lambda'}-\bar{\gamma}_s^\lambda)\bar{\mathcal{E}}_s^{\lambda,\lambda'}ds\Big], $\emph{} and
$$ R_{\lambda,\lambda'}=E^{Q}\Big[\int_0^t\Big\{(\partial_\mu f)\Big((Q_t^\lambda)_{\bar{\xi}\big(T^{\lambda,\lambda'}_s(B^\lambda)\big)},\bar{\xi}\Big)-(\partial_\mu f)\big((Q_t^\lambda)_{\xi},\bar{\xi}\big)\Big\}D_s[\bar{\xi}](\bar{\gamma}_s^{\lambda'}-\bar{\gamma}_s^\lambda)\bar{\mathcal{E}}_s^{\lambda,\lambda'}ds\Big]. $$
\underline{Computation for $I_{\lambda,\lambda'}$}. Let $D^\lambda$ denote the Malliavin derivative w.r.t. the $Q_t^\lambda$-Brownian motion $B^\lambda$ instead of $B$; it has the same definition as the Malliavin derivative $D$, only that the Brownian motion $B$ under $Q$ is replaced by the Brownian motion $B^\lambda$ under $Q^\lambda_t$. Recall from (\ref{psibar}) that for our $\xi=\psi\big(B(\Delta_1),\dots,B(\Delta_n)\big)\in\mathcal{S}$ ($\psi\in C^\infty_b(\mathbb{R}^n)$) there is some $\widehat{\psi}\in C_b^\infty(\mathbb{R}^n)$ such that $\bar{\xi}:=\xi(T_t^\lambda)=\widehat{\psi}\big(B(\Delta_1),\dots,B(\Delta_n)\big)$, and thus,
\begin{equation}\label{*8}
\begin{split}
&D_s^\lambda\xi=D_s^\lambda[\bar{\xi}(B^\lambda)]=D_s^\lambda\big[\widehat{\psi}\big(B^\lambda(\Delta_1),\dots,B^\lambda(\Delta_n)\big)\big]\\
=&\sum_{i=1}^n(\partial_{x_i}\widehat{\psi})\big(B^\lambda(\Delta_1),\dots,B^\lambda(\Delta_n)\big)I_{\Delta_i}(s)=(D_s\bar{\xi})(B^\lambda),\ s\in[0,1].
\end{split}
\end{equation}
Then, using this relation after having applied Girsanov transformation we obtain
\begin{equation}\label{equ**}
\begin{split}
I_{\lambda,\lambda'}&=E^{Q}\Big[\int_0^t(\partial_\mu f)\Big((Q_t^\lambda)_{\xi},\bar{\xi}\Big)D_s[\bar{\xi}](\bar{\gamma}_s^{\lambda'}-\bar{\gamma}_s^\lambda)\bar{\mathcal{E}}_s^{\lambda,\lambda'}ds\Big]\\
&=E^{Q_t^\lambda}\Big[\int_0^t(\partial_\mu f)\Big((Q_t^\lambda)_{\xi},\bar{\xi}(B^\lambda)\Big)D_s[\bar{\xi}](B^\lambda)(\bar{\gamma}_s^{\lambda'}-\bar{\gamma}_s^\lambda)(B^\lambda)\bar{\mathcal{E}}_s^{\lambda,\lambda'}(B^\lambda)ds\Big]\\
&=E^{Q_t^\lambda}\Big[\int_0^t(\partial_\mu f)\big((Q_t^\lambda)_{\xi},\xi\big)D_s^\lambda[\xi](\gamma_s^{\lambda'}-\gamma_s^\lambda)\mathcal{E}_s^{\lambda,\lambda'}ds\Big].\\
\end{split}
\end{equation}
(Recall that $\bar{\mathcal{E}}_s^{\lambda,\lambda'}(B^\lambda)=\mathcal{E}_s^{\lambda,\lambda'}$). Now, observing that
$$ D_s^\lambda\Big\{\int_0^\xi(\partial_\mu f)\big((Q_t^\lambda)_{\xi},y\big)dy\Big\}=(\partial_\mu f)\big((Q_t^\lambda)_{\xi},\xi\big)D_s^\lambda[\xi],\ s\in[0,1],$$
and using the duality between the Malliavin derivative $D^\lambda$ and the It\^{o} integral w.r.t. the $Q^\lambda_t$-Brownian motion $B^\lambda$, it follows that
\begin{equation*}
\begin{split}
I_{\lambda,\lambda'}&=E^{Q_t^\lambda}\Big[\int_0^t(\partial_\mu f)\big((Q_t^\lambda)_{\xi},\xi\big)D_s^\lambda[\xi](\gamma_s^{\lambda'}-\gamma_s^\lambda)\mathcal{E}_s^{\lambda,\lambda'}ds\Big]\\
&=E^{Q_t^\lambda}\Big[\int_0^t D_s^\lambda\Big\{\int_0^\xi(\partial_\mu f)\big((Q_t^\lambda)_{\xi},y\big)dy\Big\}(\gamma_s^{\lambda'}-\gamma_s^\lambda)\mathcal{E}_s^{\lambda,\lambda'}ds\Big]\\
&=E^{Q_t^\lambda}\Big[\Big\{\int_0^\xi(\partial_\mu f)\big((Q_t^\lambda)_{\xi},y\big)dy\Big\}\int_0^t (\gamma_s^{\lambda'}-\gamma_s^\lambda)\mathcal{E}_s^{\lambda,\lambda'}dB_s^\lambda\Big].
\end{split}
\end{equation*}
Then, from (\ref{equ3.19}) and another Girsanov transformation it follows that
\begin{equation*}
\begin{split}
I_{\lambda,\lambda'}&=E^{Q_t^\lambda}\Big[\Big\{\int_0^\xi(\partial_\mu f)\big((Q_t^\lambda)_{\xi},y\big)dy\Big\}\int_0^t (\gamma_s^{\lambda'}-\gamma_s^\lambda)\mathcal{E}_s^{\lambda,\lambda'}dB_s^\lambda\Big]\\
&=E^{Q_t^\lambda}\Big[\Big\{\int_0^\xi(\partial_\mu f)\big((Q_t^\lambda)_{\xi},y\big)dy\Big\}(\mathcal{E}_t^{\lambda,\lambda'}-1)\Big]\\
&=E^{Q}\Big[\Big\{\int_0^\xi(\partial_\mu f)\big((Q_t^\lambda)_{\xi},y\big)dy\Big\}(\mathcal{E}_t^{\lambda'}-\mathcal{E}_t^{\lambda})\Big].
\end{split}
\end{equation*}
Here we need the continuous $L^1(Q)$-differentiability of $\Lambda\ni\lambda\rightarrow\mathcal{E}_t^{\lambda}$; its proof will be given later in Lemma \ref{lem3.6}. Thanks to this $L^1(Q)$-differentiability it holds that
$$ \big|\mathcal{E}_t^{\lambda'}-\mathcal{E}_t^{\lambda}-(\lambda'-\lambda)\partial_\lambda\mathcal{E}_t^{\lambda}\big|_{L^1(Q)}=o(|\lambda'-\lambda|),\ \lambda'\rightarrow\lambda. $$
This allows to deduce that
$$ I_{\lambda,\lambda'}=(\lambda'-\lambda)E^{Q}\Big[\Big\{\int_0^\xi\partial_\mu f\big((Q_t^\lambda)_{\xi},y\big)dy\Big\}\partial_\lambda\mathcal{E}_t^{\lambda}\Big]+o(|\lambda'-\lambda|). $$
Now we turn to the estimate for $R_{\lambda,\lambda'}$.

\smallskip

\noindent\underline{Estimate of $R_{\lambda,\lambda'}$}. Let $h\in\mbox{Lip}_1(\mathbb{R})$ with $h(0)=0$. Then, by iterating a Girsanov transformation argument (in particular (\ref{equ3.20})) and using that $\xi\in\mathcal{S}$ is bounded, we get
\begin{equation}\label{**}
\begin{split}
&\ \Big|\int_{\mathbb{R}} h d(Q_t^\lambda)_{\bar{\xi}\big(T^{\lambda,\lambda'}_s(B^\lambda)\big)}-\int_{\mathbb{R}} h d(Q_t^\lambda)_{\xi}\Big|
=\big|E^{Q_t^\lambda}\big[h\big(\bar{\xi}\big(T^{\lambda,\lambda'}_s(B^\lambda)\big)\big)\big]-E^{Q_t^\lambda}\big[h(\xi)\big]\big|\\
=&\ \big|E^{Q_t^\lambda}\big[\mathcal{E}_s^{\lambda,\lambda'}h(\xi)\big]-E^{Q_t^\lambda}\big[h(\xi)\big]\big|=\big|E^{Q}\big[\mathcal{E}_t^\lambda(\mathcal{E}_s^{\lambda,\lambda'}-1)h(\xi)\big]\big|\\
\leq&\ |\xi|_{L^\infty}E^Q\big[\mathcal{E}_t^\lambda\big|\mathcal{E}_s^{\lambda,\lambda'}-1\big|\big]=|\xi|_{L^\infty}E^Q\big[\big|\mathcal{E}_s^{\lambda'}-\mathcal{E}_s^{\lambda}\big|\big].
\end{split}
\end{equation}
Again from the $L^1(Q)$-differentiability of $\lambda'\rightarrow\mathcal{E}_t^{\lambda'}$ we have that, for $\delta_1>0$ small enough,
$$ \frac{1}{|\lambda'-\lambda|}\big|\mathcal{E}_t^{\lambda'}-\mathcal{E}_t^{\lambda}-(\lambda'-\lambda)\partial_\lambda\mathcal{E}_t^{\lambda}\big|_{L^1(Q)}\leq1,\, \mbox{ for all } \lambda'\in\Lambda\, \mbox { with }\, 0<|\lambda'-\lambda|\leq \delta_1,$$
and thus,  $\big|\mathcal{E}_t^{\lambda'}-\mathcal{E}_t^{\lambda}\big|_{L^1(Q)}\leq\big(1+|\partial_\lambda\mathcal{E}_t^{\lambda}|_{L^1(Q)}\big)|\lambda'-\lambda|$, for all $\lambda'\in\Lambda$ with  $|\lambda'-\lambda|\leq \delta_1$.
Consequently, from (\ref{**}),
\begin{equation}\label{*****}
\begin{split}
\Big|\int_{\mathbb{R}} h d(Q_t^\lambda)_{\bar{\xi}\big(T^{\lambda,\lambda'}_s(B^\lambda)\big)}-\int_{\mathbb{R}} h d(Q_t^\lambda)_{\xi}\Big|\leq|\xi|_{L^\infty}E^Q\big[\big|\mathcal{E}_t^{\lambda'}-\mathcal{E}_t^{\lambda}\big|\big]\leq C|\lambda'-\lambda|.
\end{split}
\end{equation}
Furthermore, with the help of the Kantorovich-Rubinstein Theorem we conclude that for all $\lambda'\in\Lambda$ with $|\lambda'-\lambda|\leq \delta_1$,
$$ W_1\big((Q_t^\lambda)_{\bar{\xi}\big(T^{\lambda,\lambda'}_s(B^\lambda)\big)},(Q_t^\lambda)_\xi\big)\leq C|\lambda'-\lambda|. $$
Then, as with $\xi$ also $\bar{\xi}$ is in $\mathcal{S}$, Girsanov transformation arguments and H\"{o}lder inequality yield
\begin{equation*}
\begin{split}
|R_{\lambda,\lambda'}|&=\Big|E^{Q}\Big[\int_0^t\Big\{(\partial_\mu f)\Big((Q_t^\lambda)_{\bar{\xi}\big(T^{\lambda,\lambda'}_s(B^\lambda)\big)},\bar{\xi}\Big)-(\partial_\mu f)\big((Q_t^\lambda)_{\xi},\bar{\xi}\big)\Big\}D_s[\bar{\xi}](\bar{\gamma}_s^{\lambda'}-\bar{\gamma}_s^\lambda)\bar{\mathcal{E}}_s^{\lambda,\lambda'}ds\Big]\Big|\\
&\leq C E^Q\Big[\int_0^t W_1\big((Q_t^\lambda)_{\bar{\xi}\big(T^{\lambda,\lambda'}_s(B^\lambda)\big)},(Q_t^\lambda)_\xi\big)\big|\bar{\gamma}_s^{\lambda'}-\bar{\gamma}_s^\lambda\big|\bar{\mathcal{E}}_s^{\lambda,\lambda'}ds\Big]\\
&\leq C|\lambda'-\lambda|E^Q\Big[\int_0^t\big|\bar{\gamma}_s^{\lambda'}-\bar{\gamma}_s^\lambda\big|\bar{\mathcal{E}}_s^{\lambda,\lambda'}ds\Big]\leq C|\lambda'-\lambda|E^{Q_t^\lambda}\Big[\int_0^t\big| \gamma_s^{\lambda'}-\gamma_s^\lambda\big|\mathcal{E}_s^{\lambda,\lambda'}ds\Big]\\
&= C|\lambda'-\lambda|E^{Q}\Big[\int_0^t\big|\gamma_s^{\lambda'}-\gamma_s^\lambda\big|\mathcal{E}_s^{\lambda,\lambda'}\mathcal{E}_t^{\lambda}ds\Big]\leq C|\lambda'-\lambda|\Big(E^{Q}\Big[\int_0^t\big|\gamma_s^{\lambda'}-\gamma_s^\lambda\big|^2 ds\Big]\Big)^{\frac{1}{2}}.
\end{split}
\end{equation*}
But as $\lambda'\rightarrow\gamma^{\lambda'}\in L^2(dsdQ)$ is differentiable at $\lambda$,
\begin{equation}\label{equ***}
|R_{\lambda,\lambda'}|\leq C|\lambda'-\lambda|\cdot|\gamma^{\lambda'}-\gamma^\lambda|_{L^2(dsdQ)}\leq C|\lambda'-\lambda|^2=o(|\lambda'-\lambda|),\ \lambda'\rightarrow\lambda.
\end{equation}
Consequently,
\begin{equation*}
\begin{split}
&f\big((\mathcal{E}_t^{\lambda'}Q)_{\xi}\big)-f\big((\mathcal{E}_t^{\lambda}Q)_{\xi}\big)=I_{\lambda,\lambda'}+R_{\lambda,\lambda'}\\
=&(\lambda'-\lambda)E^{Q}\Big[\Big\{\int_0^\xi(\partial_\mu f)\big((Q_t^\lambda)_{\xi},y\big)dy\Big\}\partial_\lambda\mathcal{E}_t^{\lambda}\Big]+o(|\lambda'-\lambda|),\, \mbox{ for } \, \lambda'\in\Lambda,\, |\lambda'-\lambda|\rightarrow 0.
\end{split}
\end{equation*}
This proves that the mapping $\Lambda\ni\lambda\rightarrow f\big((Q_t^\lambda)_\xi\big)$ is differentiable, and
\begin{equation*}
\begin{split}
\partial_\lambda f\big((Q_t^\lambda)_\xi\big)&=E^Q\Big[\Big(\int_0^\xi(\partial_\mu f)\big((Q_t^\lambda)_\xi,y\big)dy\Big)\partial_\lambda[\mathcal{E}_t^\lambda]\Big]\\
&=E^{Q_t^\lambda}\Big[\Big(\int_0^\xi(\partial_\mu f)\big((Q_t^\lambda)_\xi,y\big)dy\Big)\partial_\lambda[\ln \mathcal{E}_t^\lambda]\Big],\ \lambda\in\Lambda.
\end{split}
\end{equation*}
The proof of Proposition \ref{lem1} will be completed by the following statement which we have used.
\end{proof}
\begin{lemma}\label{lem3.6}
Under Assumption 3 the mapping $\Lambda\ni\lambda\rightarrow\mathcal{E}_t^\lambda$ is continuously $L^1(Q)$-differentiable.
\end{lemma}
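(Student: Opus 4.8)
The plan is to reduce the $L^1(Q)$-differentiability of $\lambda\mapsto\mathcal{E}_t^\lambda$ to the more accessible $L^2(Q)$-differentiability of its logarithm $X_t^\lambda:=\int_0^t\gamma_s^\lambda dB_s-\tfrac12\int_0^t|\gamma_s^\lambda|^2ds$, using crucially that under Assumption 3 the step process $\gamma^\lambda$ is uniformly bounded. First I would fix a constant $C_\gamma$ with $|\gamma_s^\lambda(\omega)|\le C_\gamma$ for all $(s,\omega,\lambda)$ (possible since each $\varphi_i$ is bounded and there are finitely many of them), note that Novikov's condition holds so that $\mathcal{E}^\lambda$ is a true martingale with $E^Q[\mathcal{E}_t^\lambda]=1$, and derive the uniform moment bound $\sup_{\lambda\in\Lambda}E^Q[(\mathcal{E}_t^\lambda)^p]\le\exp\{\tfrac{p^2-p}{2}C_\gamma^2 t\}$ for every $p\in[1,\infty)$, by writing $(\mathcal{E}_t^\lambda)^p$ as an exponential martingale times $\exp\{\tfrac{p^2-p}{2}\int_0^t|\gamma_s^\lambda|^2ds\}$. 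The same computation shows that the family $\{e^{(1-\theta)X_t^\lambda+\theta X_t^{\lambda'}}\}_{\theta\in[0,1],\,\lambda'}$ is bounded in every $L^p(Q)$, in particular uniformly integrable.

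Next I would establish that $\lambda\mapsto X_t^\lambda$ is continuously $L^2(Q)$-differentiable (componentwise in $\lambda\in\mathbb{R}^m$ if $m>1$) with derivative $\partial_\lambda X_t^\lambda=\int_0^t\partial_\lambda\gamma_s^\lambda\,dB_s-\int_0^t\gamma_s^\lambda\partial_\lambda\gamma_s^\lambda\,ds$, where $\partial_\lambda\gamma^\lambda$ is the $L^2(dsdQ)$-derivative granted by Assumption 3-iib. That $\partial_\lambda X_t^\lambda\in L^2(Q)$ follows from the Itô isometry for the martingale part and, for the finite-variation part, from $|\gamma^\lambda|\le C_\gamma$ together with the Cauchy–Schwarz inequality in $s$. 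For the differentiability itself, the Itô isometry turns the martingale-part remainder into $|\gamma^{\lambda'}-\gamma^\lambda-(\lambda'-\lambda)\partial_\lambda\gamma^\lambda|_{L^2(dsdQ)}=o(|\lambda'-\lambda|)$; for the other part one writes $|\gamma^{\lambda'}|^2-|\gamma^\lambda|^2=(\gamma^{\lambda'}-\gamma^\lambda)(\gamma^{\lambda'}+\gamma^\lambda)$, uses $|\gamma^\lambda|\le C_\gamma$ and Cauchy–Schwarz, and controls the cross term $|\lambda'-\lambda|\int_0^t|\partial_\lambda\gamma^\lambda||\gamma^{\lambda'}-\gamma^\lambda|ds$ by a dominated-convergence argument — the factor $\int_0^t|\gamma^{\lambda'}-\gamma^\lambda|^2ds$ is bounded by $4C_\gamma^2 t$ and tends to $0$ in probability, while $\int_0^t|\partial_\lambda\gamma^\lambda|^2ds\in L^1(Q)$. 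Continuity of $\lambda\mapsto\partial_\lambda X_t^\lambda$ in $L^2(Q)$ follows in the same way from the continuity of $\lambda\mapsto\partial_\lambda\gamma^\lambda$ in $L^2(dsdQ)$.

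Then I would pass to the exponential via $e^a-e^b=(a-b)\int_0^1 e^{(1-\theta)b+\theta a}d\theta$ and Young's inequality $e^{(1-\theta)b+\theta a}\le(1-\theta)e^b+\theta e^a$. Decomposing $\mathcal{E}_t^{\lambda'}-\mathcal{E}_t^\lambda-(\lambda'-\lambda)\,\mathcal{E}_t^\lambda\,\partial_\lambda X_t^\lambda$ as $\big(X_t^{\lambda'}-X_t^\lambda-(\lambda'-\lambda)\partial_\lambda X_t^\lambda\big)\int_0^1 e^{(1-\theta)X_t^\lambda+\theta X_t^{\lambda'}}d\theta$ plus $(\lambda'-\lambda)\partial_\lambda X_t^\lambda\big(\int_0^1 e^{(1-\theta)X_t^\lambda+\theta X_t^{\lambda'}}d\theta-\mathcal{E}_t^\lambda\big)$, I would bound the first term by Hölder's inequality, pairing the $L^2(Q)$-small factor from step 2 with the $L^2(Q)$-bounded factor from step 1, which is $o(|\lambda'-\lambda|)$; and the second term by $|\lambda'-\lambda|\,\|\partial_\lambda X_t^\lambda\|_{L^2(Q)}\,\|\int_0^1 e^{(1-\theta)X_t^\lambda+\theta X_t^{\lambda'}}d\theta-\mathcal{E}_t^\lambda\|_{L^2(Q)}$, where the last norm tends to $0$ as $\lambda'\to\lambda$ because $X_t^{\lambda'}\to X_t^\lambda$ in $L^2(Q)$ (hence in probability) and the family is uniformly integrable in $L^2(Q)$ by step 1. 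This yields $\partial_\lambda\mathcal{E}_t^\lambda=\mathcal{E}_t^\lambda\big(\int_0^t\partial_\lambda\gamma_s^\lambda dB_s-\int_0^t\gamma_s^\lambda\partial_\lambda\gamma_s^\lambda ds\big)$ in $L^1(Q)$; its continuity in $\lambda$ in $L^1(Q)$ follows from one more Hölder estimate, using that $\lambda\mapsto\mathcal{E}_t^\lambda$ is $L^2(Q)$-continuous (convergence in probability plus the uniform $L^p$-bound) and $\lambda\mapsto\partial_\lambda X_t^\lambda$ is $L^2(Q)$-continuous.

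I expect the main obstacle to be exactly the $L^2$-to-$L^1$ passage in the last step under the genuinely weak hypothesis that $\partial_\lambda\gamma^\lambda$, and hence $\partial_\lambda X_t^\lambda$, is only square integrable: one cannot dominate $\mathcal{E}_t^\lambda\partial_\lambda X_t^\lambda$ in $L^1$ crudely, but must keep the exponential factors on the $L^2$ side of every Hölder pairing and carefully verify the uniform integrability of the family $\{e^{(1-\theta)X_t^\lambda+\theta X_t^{\lambda'}}\}$ uniformly in $\theta\in[0,1]$ and $\lambda'$ near $\lambda$; the dominated-convergence treatment of the $\int_0^t\gamma^\lambda\partial_\lambda\gamma^\lambda ds$ cross-term in step 2 is the other point requiring some care.
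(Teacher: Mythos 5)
Your proposal is correct, and it reaches the same derivative formula $\partial_\lambda\mathcal{E}_t^\lambda=\mathcal{E}_t^\lambda\big(\int_0^t\partial_\lambda\gamma_s^\lambda dB_s-\int_0^t\gamma_s^\lambda\,\partial_\lambda\gamma_s^\lambda ds\big)$ as the paper, starting from the same first step (the $L^2(Q)$-differentiability of $\ln\mathcal{E}_t^\lambda$, which hinges on $|\gamma^\lambda|\le C_\gamma$ and Assumption 3-iib). Where you genuinely diverge is in the passage from the logarithm to the exponential. The paper differentiates only in probability (writing the difference quotient of $\mathcal{E}_t^\lambda$ as $\mathcal{E}_t^\lambda$ times the difference quotient of the logarithm) and then upgrades to $L^1(Q)$ via Vitali's theorem, proving uniform integrability of the family of difference quotients $\big\{\big|\frac{\mathcal{E}_t^{\lambda'}-\mathcal{E}_t^{\lambda}}{\lambda'-\lambda}\big|\big\}$ through an $L^p$-bound ($1<p<2$) obtained by a Girsanov change of measure to $Q_t^\lambda$, the SDE $\mathcal{E}_s^{\lambda,\lambda'}=1+\int_0^s(\gamma_r^{\lambda'}-\gamma_r^\lambda)\mathcal{E}_r^{\lambda,\lambda'}dB_r^\lambda$ and a Burkholder--Davis--Gundy estimate. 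You instead use the exact Taylor identity $e^a-e^b=(a-b)\int_0^1e^{(1-\theta)b+\theta a}d\theta$ together with the uniform-in-$\lambda$ bound $\sup_\lambda E^Q[(\mathcal{E}_t^\lambda)^p]\le e^{\frac{p^2-p}{2}C_\gamma^2t}$ (and its convex-combination variant via Young's inequality), and then run Hölder pairings that always place the exponential factors in $L^2$ and the merely square-integrable $\partial_\lambda X_t^\lambda$ on the other side. Both treatments exploit the same two structural facts (boundedness of $\gamma^\lambda$, $L^2(dsdQ)$-differentiability of $\lambda\mapsto\gamma^\lambda$); yours is the more elementary and self-contained (no Girsanov, no BDG, no Vitali) and gives a quantitative remainder estimate rather than a compactness argument, while the paper's is shorter to write because the Girsanov apparatus is already in place for the proof of Proposition 3.3, and its uniform-integrability route isolates exactly the minimal integrability needed for an $L^1$ statement. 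Your closing caveats -- keeping $\partial_\lambda X_t^\lambda$ on the $L^2$ side of every pairing, and the dominated-convergence handling of the cross term $\int_0^t|\partial_\lambda\gamma_s^\lambda||\gamma_s^{\lambda'}-\gamma_s^\lambda|ds$ -- are indeed the two places where care is required, and your treatment of both is sound.
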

\begin{proof} From the definition of $\mathcal{E}^{\lambda'}_t$ as Girsanov density we have
$$ \ln(\mathcal{E}_t^{\lambda'})=\int_0^t \gamma_s^{\lambda'}dB_s-\frac{1}{2}\int_0^t|\gamma_s^{\lambda'}|^2ds,\ \lambda'\in\Lambda. $$
From Assumption 3 we recall that $|\gamma_s^{\lambda'}|\leq C,\  s\in[0,1],\, \lambda'\in\Lambda,$ and $\lambda'\rightarrow\gamma^{\lambda'}$ is continuously $L^2(dsdQ)$-differentiable. Therefore,
$$ \frac{\ln(\mathcal{E}_t^{\lambda'})-\ln(\mathcal{E}_t^{\lambda})}{\lambda'-\lambda}\rightarrow\int_0^t \partial_\lambda\gamma_s^{\lambda}dB_s-\int_0^t\gamma_s^{\lambda}\cdot\partial_\lambda\gamma_s^{\lambda}ds, \mbox{ in } L^2(Q),\ \lambda'\rightarrow \lambda,$$
and, hence, also in probability $Q$. Thus, in probability $Q$,
$$ \lim_{\lambda'\rightarrow\lambda}\frac{\mathcal{E}_t^{\lambda'}-\mathcal{E}_t^{\lambda}}{\lambda'-\lambda}=\mathcal{E}_t^\lambda\cdot\lim_{\lambda'\rightarrow\lambda}\frac{\ln(\mathcal{E}_t^{\lambda'})-\ln(\mathcal{E}_t^{\lambda})}{\lambda'-\lambda}= \mathcal{E}_t^{\lambda}\Big\{\int_0^t\partial_\lambda\gamma_s^{\lambda}dB_s-\int_0^t\gamma_s^{\lambda}\cdot\partial_\lambda\gamma_s^{\lambda}ds\Big\}, $$
and in order to complete the proof, it suffices to show that $\Big\{\Big|\frac{\mathcal{E}_t^{\lambda'}-\mathcal{E}_t^{\lambda}}{\lambda'-\lambda}\Big|,\, \lambda'\in\Lambda,\, |\lambda'-\lambda|\leq \delta_1\Big\}$, for some $\delta_1>0$, is uniformly integrable. For this it is sufficient to check that $\Big\{\Big|\frac{\mathcal{E}_t^{\lambda'}-\mathcal{E}_t^{\lambda}}{\lambda'-\lambda}\Big|,\, \lambda'\in\Lambda,\, |\lambda'-\lambda|\leq \delta_1\Big\}$ is bounded in $L^p(Q)$, for some $1<p<2$. Indeed, choosing arbitrarily $1<p<p'<2$, we have
\begin{equation*}
\begin{split}
&\Big(E^Q\Big[\Big|\frac{\mathcal{E}_t^{\lambda'}-\mathcal{E}_t^{\lambda}}{\lambda'-\lambda}\Big|^p\Big]\Big)^{\frac{1}{p}}=\Big(E^{Q_t^\lambda}\Big[\Big|\frac{\mathcal{E}_t^{\lambda,\lambda'}-1}{\lambda'-\lambda}\Big|^p(\mathcal{E}_t^\lambda)^p\frac{1}{\mathcal{E}_t^\lambda}\Big]\Big)^{\frac{1}{p}}\\
&\leq  C_{p,p'}\Big(E^{Q_t^\lambda}\Big[\Big|\frac{\mathcal{E}_t^{\lambda,\lambda'}-1}{\lambda'-\lambda}\Big|^{p'}\Big]\Big)^{\frac{1}{p'}}.
\end{split}
\end{equation*}
Here we have used that $\gamma^{\lambda'}$ is bounded, uniformly w.r.t. $\lambda'\in\Lambda$. Moreover, as $\displaystyle \mathcal{E}_t^{\lambda,\lambda'}=1+\int_0^t(\gamma_s^{\lambda'}-\gamma_s^\lambda)\mathcal{E}_s^{\lambda,\lambda'}dB_s^{\lambda},$ standard estimates give that, for some $\delta_1>0$,
\begin{equation*}
\begin{split}
&\Big(E^Q\Big[\Big|\frac{\mathcal{E}_t^{\lambda'}-\mathcal{E}_t^{\lambda}}{\lambda'-\lambda}\Big|^p\Big]\Big)^{\frac{1}{p}}\leq C_{p,p'}\Big(E^{Q_t^\lambda}\Big[\Big|\frac{\mathcal{E}_t^{\lambda,\lambda'}-1}{\lambda'-\lambda}\Big|^{p'}\Big]\Big)^{\frac{1}{p'}}= \frac{C_{p,p'}}{|\lambda'-\lambda|}\Big(E^{Q_t^\lambda}\Big[\Big|\int_0^t(\gamma_s^{\lambda'}-\gamma_s^{\lambda})\mathcal{E}_s^{\lambda,\lambda'}dB_s^\lambda\Big|^{p'}\Big]\Big)^{\frac{1}{p'}}\\
&\leq \frac{C_{p,p'}}{|\lambda'-\lambda|}\Big(E^{Q_t^\lambda}\Big[\Big(\int_0^t\big(|\gamma_s^{\lambda'}-\gamma_s^{\lambda}|\mathcal{E}_s^{\lambda,\lambda'}\big)^2ds\Big)^{\frac{p'}{2}}\Big]\Big)^{\frac{1}{p'}}\\
&\leq \frac{C_{p,p'}}{|\lambda'-\lambda|}\Big(E^{Q}\Big[\Big(\int_0^t|\gamma_s^{\lambda'}-\gamma_s^{\lambda}|^2ds\Big)^{\frac{p'}{2}}\sup_{s\in[0,t]}(\mathcal{E}_s^{\lambda,\lambda'})^{p'}\mathcal{E}_t^\lambda\Big]\Big)^{\frac{1}{p'}}
\leq \frac{C_{p,p'}}{|\lambda'-\lambda|}\Big(E^{Q}\Big[\int_0^t|\gamma_s^{\lambda'}-\gamma_s^{\lambda}|^2ds\Big]\Big)^{\frac{1}{2}}\\
&=\frac{C_{p,p'}}{|\lambda'-\lambda|}|\gamma^{\lambda'}-\gamma^{\lambda}|_{L^2(dsdQ)}\leq C_{p,p'},\, \, \mbox{for all } \lambda'\in \Lambda \mbox{ with } |\lambda'-\lambda|\leq \delta_1.
\end{split}
\end{equation*}
The latter estimate follows from the $L^2(dsdQ)$-differentiability of $\lambda'\rightarrow\gamma^{\lambda'}$.
The boundness of $\Big\{\Big|\frac{\mathcal{E}_t^{\lambda'}-\mathcal{E}_t^{\lambda}}{\lambda'-\lambda}\Big|,\, \lambda'\in\Lambda,\, |\lambda'-\lambda|\leq \delta_1\Big\}$ in $L^p(Q)$ $(1<p<2)$ has been deduced and the proof ends.
\end{proof}

\smallskip

From Proposition \ref{lem1} we get an immediate consequence as follows.
\begin{corollary}\label{coro1}
Under the Assumptions 2 and 3, for $\lambda,\lambda'\in\Lambda$ such that $[\lambda,\lambda']=\big\{\lambda(s)=s\lambda'+(1-s)\lambda,\ s\in[0,1]\big\}\subset\Lambda$,
\begin{equation*}
\begin{split}
&f\big((\mathcal{E}_t^{\lambda'}Q)_{\xi}\big)-f\big((\mathcal{E}_t^{\lambda}Q)_{\xi}\big)\Big(=\int_0^1 \partial_s\big[f\big((\mathcal{E}_t^{\lambda(s)}Q)_\xi\big)\big]ds\Big)\\
&=(\lambda'-\lambda)\int_0^1 E^Q\Big[\Big(\int_0^\xi\partial_\mu f\big((Q_t^{\lambda(s)})_\xi,y\big)dy\Big)\partial_\lambda\mathcal{E}_t^{\lambda(s)}\Big]ds.
\end{split}
\end{equation*}
\end{corollary}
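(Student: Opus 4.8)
The plan is to obtain the identity by a straightforward reparametrisation of Proposition~\ref{lem1} along the segment $[\lambda,\lambda']$, followed by the fundamental theorem of calculus. Fix $\lambda,\lambda'\in\Lambda$ with $[\lambda,\lambda']\subset\Lambda$, set $\lambda(s):=s\lambda'+(1-s)\lambda$ for $s\in[0,1]$, and consider the reparametrised family of smooth Wiener step processes $\widetilde\gamma^{\,s}:=\gamma^{\lambda(s)}$, $s\in[0,1]$. Since $s\mapsto\lambda(s)$ is affine and, by Assumption~3, $\Lambda\ni\lambda\mapsto\gamma^\lambda$ is continuously $L^2(dsdQ)$-differentiable while all derivatives of the $\varphi_i^\lambda$ are bounded over $\Lambda\times\mathbb{R}^{i-1}$, the family $(\widetilde\gamma^{\,s})_{s\in[0,1]}$ again satisfies Assumption~3 with $[0,1]$ in the role of $\Lambda$, and Assumption~2 on $f$ is unchanged. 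Note that $\mathcal{E}_t^{\lambda(s)}$ is precisely the Dol\'ean--Dade exponential attached to $\widetilde\gamma^{\,s}$, so that $(Q_t^{\lambda(s)})_\xi=(\mathcal{E}_t^{\lambda(s)}Q)_\xi$.

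First I would apply Proposition~\ref{lem1} to the family $(\widetilde\gamma^{\,s})_{s\in[0,1]}$, which yields that $[0,1]\ni s\mapsto f\big((Q_t^{\lambda(s)})_\xi\big)$ is differentiable with
\begin{equation*}
\partial_s\big[f\big((Q_t^{\lambda(s)})_\xi\big)\big]=E^Q\Big[\Big(\int_0^\xi\partial_\mu f\big((Q_t^{\lambda(s)})_\xi,y\big)dy\Big)\partial_s\big[\mathcal{E}_t^{\lambda(s)}\big]\Big].
\end{equation*}
By the chain rule together with the continuous $L^1(Q)$-differentiability of $\lambda\mapsto\mathcal{E}_t^\lambda$ established in Lemma~\ref{lem3.6}, one has $\partial_s\big[\mathcal{E}_t^{\lambda(s)}\big]=(\lambda'-\lambda)\,\partial_\lambda\mathcal{E}_t^{\lambda(s)}$ in $L^1(Q)$, whence
\begin{equation*}
\partial_s\big[f\big((Q_t^{\lambda(s)})_\xi\big)\big]=(\lambda'-\lambda)\,E^Q\Big[\Big(\int_0^\xi\partial_\mu f\big((Q_t^{\lambda(s)})_\xi,y\big)dy\Big)\partial_\lambda\mathcal{E}_t^{\lambda(s)}\Big],\qquad s\in[0,1].
\end{equation*}

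Next I would check that this derivative is continuous in $s$, so that $s\mapsto f\big((Q_t^{\lambda(s)})_\xi\big)$ is of class $C^1$ on $[0,1]$ and the fundamental theorem of calculus applies. Continuity follows from three ingredients: $s\mapsto(Q_t^{\lambda(s)})_\xi$ is $W_1$-continuous, because $W_1\big((Q_t^{\lambda(s)})_\xi,(Q_t^{\lambda(s_0)})_\xi\big)\le|\xi|_{L^\infty}\,\big|\mathcal{E}_t^{\lambda(s)}-\mathcal{E}_t^{\lambda(s_0)}\big|_{L^1(Q)}\to0$ (here $\xi\in\mathcal{S}$ is bounded and $s\mapsto\mathcal{E}_t^{\lambda(s)}$ is $L^1(Q)$-continuous); $\partial_\mu f(\cdot,y)$ is $W_1$-continuous and bounded by $C$ (Assumption~2\,ii)--iii)); and $s\mapsto\partial_\lambda\mathcal{E}_t^{\lambda(s)}$ is $L^1(Q)$-continuous by Lemma~\ref{lem3.6}. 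Dominated convergence then gives continuity of $s\mapsto\partial_s[f((Q_t^{\lambda(s)})_\xi)]$, and therefore
\begin{equation*}
\begin{split}
f\big((\mathcal{E}_t^{\lambda'}Q)_\xi\big)-f\big((\mathcal{E}_t^{\lambda}Q)_\xi\big)
&=\int_0^1\partial_s\big[f\big((Q_t^{\lambda(s)})_\xi\big)\big]ds\\
&=(\lambda'-\lambda)\int_0^1E^Q\Big[\Big(\int_0^\xi\partial_\mu f\big((Q_t^{\lambda(s)})_\xi,y\big)dy\Big)\partial_\lambda\mathcal{E}_t^{\lambda(s)}\Big]ds,
\end{split}
\end{equation*}
which is the asserted formula.

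The argument is essentially mechanical, and I do not expect a serious obstacle. The only delicate point is the passage from the pointwise differentiability supplied by Proposition~\ref{lem1} to the integral representation, i.e.\ the application of the fundamental theorem of calculus; this is resolved as above by verifying that the derivative in $s$ is continuous (equivalently, it would suffice to note that it is bounded in $s$, which already grants absolute continuity). Everything else is reparametrisation and the chain rule for $\lambda\mapsto\mathcal{E}_t^\lambda$.
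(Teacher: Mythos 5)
Your proposal is correct and follows exactly the route the paper intends: the paper presents Corollary \ref{coro1} as an ``immediate consequence'' of Proposition \ref{lem1}, namely restricting to the segment $[\lambda,\lambda']$, applying the proposition pointwise in $s$ together with the chain rule for $\lambda\mapsto\mathcal{E}_t^\lambda$ from Lemma \ref{lem3.6}, and integrating via the fundamental theorem of calculus. Your verification that the $s$-derivative is continuous (using the boundedness of $\xi\in\mathcal{S}$, Assumption 2 and the continuous $L^1(Q)$-differentiability of $\lambda\mapsto\mathcal{E}_t^\lambda$) correctly fills in the only detail the paper leaves implicit.
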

In addition to Proposition \ref{lem1} the proof of the main statement of this subsection, Theorem \ref{Theorem3.1}, also needs the following approximation result.
\begin{proposition}\label{lem2}
Let $\Lambda\ni\lambda\rightarrow L^\lambda\in \mathcal{L}^Q\cap L^2(\Omega,\mathcal{F},Q)$ be a continuously $L^2(Q)$-differentiable mapping. Then there exists a sequence of bounded smooth Wiener step processes $\gamma^{\lambda,n},\ n\geq1$,
\begin{equation}\label{eqlem}
\gamma_t^{\lambda,n}=\sum_{i=1}^{N_n}\varphi_i^{\lambda,n}\big(B(\Delta^n_1),...,B(\Delta^n_{i-1})\big)I_{\Delta^n_i}(t),\ t\in[0,1],
\end{equation}
with $0=t^n_0<t^n_1<...<t^n_{N_n}=1$, $\Delta^n_i=(t^n_{i-1},t^n_i]$, such that\\
\indent {\rm i)} $\varphi^{\cdot,n}_i:\Lambda\times\mathbb{R}^{i-1}\rightarrow\mathbb{R}$ is a bounded Borel function,\\
\indent {\rm ii)} $\varphi^{\lambda,n}_{i}:\mathbb{R}^{i-1}\rightarrow\mathbb{R}$ is of class $C^\infty$ and the derivatives of all order are bounded over $\Lambda\times\mathbb{R}^{i-1}$,\\
\indent \ \ \ \ \ $1\leq i\leq n$,\\
\indent {\rm iii)} $\Lambda\ni\lambda\rightarrow\gamma^{\lambda,n}\in L^2_{\mathbb{F}}([0,1]\times\Omega,dsdQ)$ is continuously $L^2(dsdQ)$-differentiable on $\Lambda$, such that,\\
\indent \ \ \ \ \ for $\displaystyle \mathcal{E}_t^{\lambda,n}:=\exp\Big\{\int_0^t\gamma^{\lambda,n}_sdB_s-\frac{1}{2}\int_0^t|\gamma^{\lambda,n}_s|^2ds\Big\},\ t\in[0,1],\ \lambda\in\Lambda$, we have
\begin{equation*}
\begin{split}
&{\rm a) }\ (\mathcal{E}_1^{\lambda,n},\partial_\lambda\mathcal{E}_1^{\lambda,n})\xrightarrow[n\rightarrow\infty]{L^2(Q)}(L^\lambda,\partial_\lambda L^\lambda),\, \mbox{ for all }\lambda\in\Lambda,\\
&{\rm b) } \mbox{ for all } \lambda,\lambda'\in\Lambda \mbox{ with } [\lambda,\lambda']\subset\Lambda,\ \lambda(s):=s\lambda'+(1-s)\lambda,\ s\in[0,1],\\
&\ \ \ \ \ \ \ \ \ \ \ (\mathcal{E}_1^{\lambda(\cdot),n},\partial_\lambda\mathcal{E}_1^{\lambda(\cdot),n})\xrightarrow[n\rightarrow\infty]{L^2(dsdQ)}(L^{\lambda(\cdot)},\partial_\lambda L^{\lambda(\cdot)}).
\end{split}
\end{equation*}
\end{proposition}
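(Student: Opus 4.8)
The plan is to build the processes $\gamma^{\lambda,n}$ by a chain of reductions, keeping track of the $\lambda$–derivative at every stage, and then to pass from the $\gamma^{\lambda,n}$ to the Dol\'ean–Dade exponentials $\mathcal E^{\lambda,n}_1$ exactly by the computations already carried out in Lemma \ref{lem3.6} and in the proof of Proposition \ref{lem1}. First I would reduce to densities that are bounded and bounded away from $0$: fix $C>1$ and a smooth increasing map $\chi_C\colon(0,\infty)\to[\tfrac{1}{2C},C]$ with $\chi_C(x)=x$ on $[C^{-1},C/2]$ and $0\le\chi_C'\le 2$, and set $L^{\lambda,C}:=\chi_C(L^\lambda)/E^Q[\chi_C(L^\lambda)]\in\mathcal L^Q\cap L^\infty(\Omega,\mathcal F,Q)$. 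Using the chain rule, the bound on $\chi_C'$, and dominated convergence (with $\chi_C(L^\lambda)\le L^\lambda+1$ and, by the $L^2(Q)$–continuity of $\lambda\mapsto L^\lambda$, the boundedness of $s\mapsto E^Q[|L^{\lambda(s)}|^2]$ on $[0,1]$), one checks that $\lambda\mapsto L^{\lambda,C}$ is again continuously $L^2(Q)$–differentiable and that $(L^{\lambda,C},\partial_\lambda L^{\lambda,C})\to(L^\lambda,\partial_\lambda L^\lambda)$ in $L^2(Q)$ for each $\lambda$, and in $L^2(dsdQ)$ along every segment $[\lambda,\lambda']\subset\Lambda$, as $C\to\infty$. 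Hence, via a diagonal argument over $C$ at the very end, it suffices to prove the proposition under the extra assumption $c\le L^\lambda\le C$ for constants $0<c\le C$ not depending on $\lambda$ on the relevant compacts and segments.

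Next I would produce the step–process coefficients at a purely discrete level. With $c\le L^\lambda\le C$, for a dyadic partition $0=t^n_0<\dots<t^n_{N_n}=1$ put $M_i^{\lambda,n}:=E^Q[L^\lambda\,|\,\mathcal F_{t^n_i}]\in[c,C]$ and define on $\Delta_i^n$ the one–step least–squares coefficient
$$\widehat\varphi_i^{\lambda,n}:=\frac{1}{|\Delta_i^n|\,M_{i-1}^{\lambda,n}}\,E^Q\!\big[L^\lambda\,B(\Delta_i^n)\,\big|\,\mathcal F_{t^n_{i-1}}\big],$$
which is the natural $\mathbb F$–adapted step–process version of the integrand $\gamma^\lambda:=\zeta^\lambda/M^\lambda$ coming from the Brownian representation $M^\lambda=1+\int\zeta^\lambda\,dB$. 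Since $L^\lambda\in[c,C]$ and $E^Q[|B(\Delta_i^n)|\,|\,\mathcal F_{t^n_{i-1}}]=\sqrt{2|\Delta_i^n|/\pi}$, each $\widehat\varphi_i^{\lambda,n}$ is bounded (for fixed $n$) by a constant depending only on $c,C$ and the mesh, and differentiating under the conditional expectation, $\partial_\lambda\widehat\varphi_i^{\lambda,n}=\frac{1}{|\Delta_i^n|M_{i-1}^{\lambda,n}}E^Q[\partial_\lambda L^\lambda\,B(\Delta_i^n)\,|\,\mathcal F_{t^n_{i-1}}]-\frac{\widehat\varphi_i^{\lambda,n}}{M_{i-1}^{\lambda,n}}E^Q[\partial_\lambda L^\lambda\,|\,\mathcal F_{t^n_{i-1}}]$, whose $L^2(Q)$–norm is finite by Cauchy–Schwarz and Doob's inequality, using only $\partial_\lambda L^\lambda\in L^2(Q)$. (Working at this discrete level is essential here: the analogous object at the continuous level would involve the product $\zeta^\lambda\,\partial_\lambda M^\lambda$, which is only $L^1$ in $(s,\omega)$ unless one knows $\partial_\lambda L^\lambda\in L^4$.) Thus $\widehat\gamma^{\lambda,n}:=\sum_i\widehat\varphi_i^{\lambda,n}I_{\Delta_i^n}$ is a bounded adapted step process, $\lambda\mapsto\widehat\gamma^{\lambda,n}$ is continuously $L^2(dsdQ)$–differentiable, and, since $\widehat\gamma^{\lambda,n}$ is up to $O(|\Delta^n|)$ terms the causal dyadic step–process approximation of $\gamma^\lambda$, one has $\widehat\gamma^{\lambda,n}\to\gamma^\lambda$ in $L^2(dsdQ)$, whence the Dol\'ean–Dade exponentials satisfy $\widehat{\mathcal E}^{\lambda,n}_1\to L^\lambda$ and $\partial_\lambda\widehat{\mathcal E}^{\lambda,n}_1\to\partial_\lambda L^\lambda$ in probability; these are promoted to $L^2(Q)$ (and to $L^2(dsdQ)$ along segments) by the uniform $L^p$, $p>2$, bounds coming from the Burkholder–Davis–Gundy inequality together with $c\le L^\lambda\le C$, exactly as in the proof of Lemma \ref{lem3.6}.

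Finally I would smooth in the increment variables: each $\widehat\varphi_i^{\lambda,n}$ is an $\mathcal F_{t^n_{i-1}}$–measurable, hence bounded Borel, function of $\big(B(\Delta_1^n),\dots,B(\Delta^n_{i-1})\big)$, so convolving it with a Gaussian kernel $\rho_\delta$ on $\mathbb R^{i-1}$ and, if needed, composing with a fixed smooth bounded cut–off yields $\varphi_i^{\lambda,n}\in C^\infty_b(\mathbb R^{i-1})$ with derivatives of every order bounded uniformly in $(\lambda,x)$, at the cost of an error that tends to $0$ in $L^2(dsdQ)$ together with its $\lambda$–derivative as $\delta\to0$ (convolution and composition with a fixed smooth map commute with $\partial_\lambda$). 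A diagonal extraction over $(n,\delta,C)$ then produces a single sequence $\gamma^{\lambda,n}$ of the form \eqref{eqlem} satisfying i)–iii), and repeating the passage to the exponentials of the previous step with $\gamma^{\lambda,n}$ in place of $\widehat\gamma^{\lambda,n}$ gives a) and b).

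I expect the main obstacle to be the \emph{simultaneous} $L^2$–convergence of the $\lambda$–derivatives: one must arrange the whole construction so that the martingale representation, the division by $M^\lambda$, the dyadic regression, the Gaussian mollification and the cut–off all commute with $\partial_\lambda$, and so that every error term \emph{and its $\lambda$–derivative} tends to zero, while the only integrability available on $\partial_\lambda L^\lambda$ is $L^2$. Carrying out this bookkeeping — and, as indicated, performing the construction at the discrete, finitely–many–conditional–expectations level so that the delicate product $\zeta^\lambda\,\partial_\lambda M^\lambda$ is never met at the continuous level — is the technical heart of the argument, which is why its full execution is deferred to the Appendix.
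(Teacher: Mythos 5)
Your architecture (truncate, build adapted step--process coefficients by a one-step regression, then mollify in the increment variables) is genuinely different from the paper's, but it has a gap at precisely the point you yourself flag as the technical heart, and the discrete detour does not remove it. First, your own formula for $\partial_\lambda\widehat\varphi_i^{\lambda,n}$ contains the term $-\frac{\widehat\varphi_i^{\lambda,n}}{M_{i-1}^{\lambda,n}}E^Q[\partial_\lambda L^\lambda\,|\,\mathcal F_{t_{i-1}^n}]$, which is exactly the discrete counterpart of the product $\zeta^\lambda\,\partial_\lambda M^\lambda/(M^\lambda)^2$ you claim to have avoided: $\widehat\varphi^{\lambda,n}$ converges only in $L^2(dsdQ)$ and $E^Q[\partial_\lambda L^\lambda\,|\,\mathcal F_{t^n_{i-1}}]$ only in $L^2(Q)$, so their product converges in $L^1(dsdQ)$ but not, without $L^4$ information on $\partial_\lambda L^\lambda$ or $\zeta^\lambda$, in $L^2(dsdQ)$; hence neither the identification of the $L^2(dsdQ)$-limit of $\partial_\lambda\widehat\gamma^{\lambda,n}$ nor, more importantly, the $L^2(Q)$-convergence of $\partial_\lambda\widehat{\mathcal E}^{\lambda,n}_1=\widehat{\mathcal E}^{\lambda,n}_1\big(\int_0^1\partial_\lambda\widehat\gamma^{\lambda,n}_sdB_s-\int_0^1\widehat\gamma^{\lambda,n}_s\partial_\lambda\widehat\gamma^{\lambda,n}_sds\big)$ is established. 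Second, the promotion of the exponentials from convergence in probability to $L^2(Q)$ ``exactly as in Lemma \ref{lem3.6}'' is not available: every estimate there (and in Proposition \ref{lem1}) uses that the $\gamma$'s are bounded \emph{uniformly in $(s,\omega,\lambda)$ and in the approximation parameter}, whereas your $\widehat\gamma^{\lambda,n}$ is bounded only by a constant of order $|\Delta^n|^{-1/2}$ and $\int_0^1|\widehat\gamma^{\lambda,n}_s|^2ds$ is not pathwise bounded. The bound $c\le L^\lambda\le C$ gives $E^Q\big[\big(\int_0^1|\zeta^\lambda_s|^2ds\big)^{p/2}\big]<\infty$ for every $p$ by Burkholder--Davis--Gundy, but not the exponential integrability needed for uniform $L^{2+\epsilon}$-bounds on $\widehat{\mathcal E}^{\lambda,n}_1$, so uniform integrability of $(\widehat{\mathcal E}^{\lambda,n}_1)^2$ does not follow.

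This is exactly why the paper's proof takes the longer route: it first replaces $L^\lambda$ by $E^Q[L^\lambda\,|\,\mathcal G_n]$, writes it as $h^\lambda\big(B(\Delta_1),\dots,B(\Delta_N)\big)$, truncates and mollifies $h$ jointly in $(\lambda,x)$, and renormalizes so as to obtain a smooth Wiener functional $L^\lambda_\varepsilon\in\mathcal S$ bounded above and strictly away from zero; only then does it extract $\gamma^{\lambda,\varepsilon}_s=E^Q[D_sL^\lambda_\varepsilon\,|\,\mathcal F_s]/E^Q[L^\lambda_\varepsilon\,|\,\mathcal F_s]=g^\lambda(s,B_s)$ via the Clark--Ocone formula, with $g$ and all its $(\lambda,x)$-derivatives bounded, so that the final time-discretization stays inside a fixed sup-norm ball and all the Dol\'{e}an--Dade estimates (including the differentiation of the exponential in $\lambda$) go through. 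To salvage your construction you would need to insert an analogous uniform-boundedness stage (e.g.\ truncating $\widehat\varphi_i^{\lambda,n}$ at a level tending to infinity slowly with $n$, and controlling the truncation error together with its $\lambda$-derivative) and to upgrade the convergence of the product term above; both tasks amount to redoing the part of the argument you have deferred.
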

For a better readability of the work, its proof is postponed to the Appendix. Let us now apply this proposition, in order to give the proof of Theorem \ref{Theorem3.1}.

\begin{proof} (of Theorem \ref{Theorem3.1}). Let the mapping $\Lambda\ni\lambda\rightarrow L^\lambda\in \mathcal{L}^Q\cap L^2(\Omega,\mathcal{F},Q)$ be continuously $L^2(Q)$-differentiable,  and $\xi\in L^4(\Omega,\mathcal{F},Q)$.
Then, due to Proposition \ref{lem2}, there exists a sequence of smooth Wiener step processes $\gamma^{\lambda,n},\ n\geq1$, of the form \eqref{eqlem}, such that\\
i) $\Lambda\ni\lambda\rightarrow\gamma^{\lambda,n}\in L^2_{\mathbb{F}}([0,1]\times\Omega,dsdQ)$ is continuously $L^2(dsdQ)$-differentiable on $\Lambda$;\\
ii) For the Dol\'ean-Dade exponential $\displaystyle\mathcal{E}_1^{\lambda,n}:=\exp\Big\{\int_0^1\gamma^{\lambda,n}_sdB_s-\frac{1}{2}\int_0^1|\gamma^{\lambda,n}_s|^2ds\Big\},\ \lambda\in\Lambda,\, n\ge 1$, we have
\begin{equation*}
\begin{split}
&\mbox{a) } \mbox{ For all } \lambda\in\Lambda,\ (\mathcal{E}_1^{\lambda,n},\partial_\lambda\mathcal{E}_1^{\lambda,n})\xrightarrow[n\rightarrow\infty]{L^2(Q)}(L^\lambda,\partial_\lambda L^\lambda);\\
&\mbox{b) } \mbox{ For all } \lambda,\lambda'\in\Lambda \mbox{ with } [\lambda,\lambda']\subset\Lambda,\, \mbox{ and } \lambda(s)=s\lambda'+(1-s)\lambda,\ s\in[0,1],\\
&\ \ \ \ \ \ \ \ \ \ \ \big(\mathcal{E}_1^{\lambda(\cdot),n},\partial_\lambda\mathcal{E}_1^{\lambda(\cdot),n}\big)\xrightarrow[n\rightarrow\infty]{L^2(dsdQ)}\big(L^{\lambda(\cdot)},\partial_\lambda L^{\lambda(\cdot)}\big).
\end{split}
\end{equation*}
Moreover, by now standard arguments we have the existence of a sequence of smooth Wiener functionals $\{\xi^n\}_{n\geq1}$ such that $\xi^n\rightarrow\xi$, in $L^4(Q)$, $n\rightarrow\infty$. From Corollary \ref{coro1} we have
\begin{equation}\label{proof1}
\begin{split}
&f\big((\mathcal{E}^{\lambda',n}Q)_{\xi^n}\big)-f\big((\mathcal{E}^{\lambda,n}Q)_{\xi^n}\big)\\	
&=(\lambda'-\lambda)\int_0^1 E^Q\Big[\Big(\int_0^{\xi^n}\partial_\mu f\big((\mathcal{E}^{\lambda(s),n}Q)_{\xi^n},y\big)dy\Big)\partial_\lambda\mathcal{E}^{\lambda(s),n}\Big]ds,\ n\geq1.
\end{split}
\end{equation}
Let us show that taking the limit in this equality, as $n\rightarrow\infty$, yields the result stated in the theorem.\\
\textbf{1)} Let us begin our discussion with the term of the left-hand side of (\ref{proof1}). For this we note that, for all $h\in \mbox{Lip}_1(\mathbb{R})$ with $h(0)=0$,
\begin{equation*}
\begin{split}
&\Big|\int_{\mathbb{R}} h d(\mathcal{E}_1^{\lambda,n}Q)_{\xi^n}-\int_{\mathbb{R}} h d(L^\lambda Q)_{\xi}\Big|=\Big|E^Q\big[\mathcal{E}_1^{\lambda,n} h(\xi^n)\big]-E^Q\big[L^\lambda h(\xi)\big]\Big|\\
&\leq E^Q\big[\big|\xi^n\big|\cdot\big|\mathcal{E}_1^{\lambda,n}-L^\lambda\big|\big]+E^Q\big[L^\lambda\cdot\big|\xi^n-\xi\big|\big]\\
&\leq \big|\xi^n\big|_{L^2(Q)}\big|\mathcal{E}_1^{\lambda,n}-L^\lambda\big|_{L^2(Q)}+\big|L^\lambda\big|_{L^2(Q)}\big|\xi^n-\xi\big|_{L^2(Q)}=:a_n(\lambda).
\end{split}
\end{equation*}
Consequently, due to the Kanorovich-Rubinstein Theorem,
$$W_1\big((\mathcal{E}^{\lambda,n}Q)_{\xi^n}, (L^\lambda Q)_{\xi}\big)\leq a_n(\lambda),$$
and as, due to Assumption 2, $\big|f\big((\mathcal{E}^{\lambda,n}Q)_{\xi^n}\big)-f\big((L^\lambda Q)_{\xi}\big)\big|\leq CW_1\big((\mathcal{E}^{\lambda,n}Q)_{\xi^n},(L^\lambda Q)_{\xi}\big),$ it follows that, for all $\lambda\in\Lambda$, $\big|f\big((\mathcal{E}^{\lambda,n}Q)_{\xi^n}\big)-f\big((L^\lambda Q)_{\xi}\big)\big|\leq C a_n(\lambda)\rightarrow 0,\, \mbox{ as } n\rightarrow\infty.$ Consequently, for all $\lambda,\lambda'\in\Lambda$,
$$ f\big((\mathcal{E}^{\lambda',n}Q)_{\xi^n}\big)-f\big((\mathcal{E}^{\lambda,n}Q)_{\xi^n}\big)\xrightarrow[n\rightarrow\infty]{} f\big((L^{\lambda'} Q)_{\xi}\big)-f\big((L^\lambda Q)_{\xi}\big). $$
\textbf{2)} Let us now show the convergence for the right-hand side of (\ref{proof1}). For this we put $\displaystyle\vartheta_n(s):=\int_0^{\xi^n}\partial_\mu f\big((\mathcal{E}^{\lambda(s),n}Q)_{\xi^n},y\big)dy$ and $\displaystyle\vartheta(s):=\int_0^{\xi}\partial_\mu f\big((L^{\lambda(s)}Q)_{\xi},y\big)dy$, $s\in[0,1]$, and from Assumption 2 we obtain
\begin{equation*}
\begin{split}
\big|\vartheta_n(s)-\vartheta(s)\big|&\leq C|\xi-\xi^n|+C\int_0^{|\xi|}W_1\big((\mathcal{E}^{\lambda(s),n}Q)_{\xi^n},(L^{\lambda(s)}Q)_{\xi}\big)dy\\
&\leq C|\xi-\xi^n|+C|\xi|a_n\big(\lambda(s)\big),
\end{split}
\end{equation*}
and, hence,
$$ \int_0^1 E^Q\big[\big|\vartheta_n(s)-\vartheta(s)\big|^2\big]ds\leq CE^Q[|\xi-\xi^n|^2]+CE^Q[|\xi|^2]\int_0^1\Big(a_n\big(\lambda(s)\big)\Big)^2ds\xrightarrow[n\rightarrow\infty]{}0. $$
Consequently, for the right-hand side of (\ref{proof1}) we get
\begin{equation*}
\begin{split}
&\Big|\int_0^1E^Q\big[\vartheta_n(s)\partial_\lambda\mathcal{E}^{\lambda(s),n}\big]ds-\int_0^1E^Q\big[\vartheta(s)\partial_\lambda L^{\lambda(s)}\big]ds\Big|\\
\leq&\Big(\int_0^1 E^Q\big[\big|\vartheta_n(s)-\vartheta(s)\big|^2\big]ds\Big)^{\frac{1}{2}}\Big(\int_0^1 E^Q\big[\big|\partial_\lambda\mathcal{E}^{\lambda(s),n}\big|^2\big]ds\Big)^{\frac{1}{2}}\\
&+\Big(\int_0^1 E^Q\big[\big|\partial_\lambda\mathcal{E}^{\lambda(s),n}-\partial_\lambda L^{\lambda(s)}\big|^2\big]ds\Big)^{\frac{1}{2}}\Big(\int_0^1 E^Q\big[\big|\vartheta(s) \big|^2\big]ds\Big)^{\frac{1}{2}}\rightarrow0,\ n\rightarrow\infty.
\end{split}
\end{equation*}
Combined with our result from 1) this yields
\begin{equation}\label{equ3.30}
\begin{split}
f\big((L^{\lambda'}Q)_{\xi}\big)-f\big((L^{\lambda}Q)_{\xi}\big)=(\lambda'-\lambda)\int_0^1 E^Q\Big[\Big(\int_0^{\xi}\partial_\mu f\big((L^{\lambda(s)}Q)_{\xi},y\big)dy\Big)\partial_\lambda L^{\lambda(s)}\Big]ds.
\end{split}
\end{equation}
Finally, observe that, as
\begin{equation*}
\begin{split}
& E^Q\Big[\Big|\int_0^{\xi}\Big(\partial_\mu f\big((L^{\lambda(s)}Q)_{\xi},y\big)-\partial_\mu f\big((L^{\lambda}Q)_{\xi},y\big)\Big)dy \Big|\Big]\\
&\leq CE^Q\big[|\xi|\cdot W_1\big((L^{\lambda(s)}Q)_\xi, (L^{\lambda}Q)_\xi\big)\big]\\
&\leq CE^Q\big[|\xi|\big]E^Q\big[|\xi||L^{\lambda(s)}-L^\lambda|\big],
\end{split}
\end{equation*}
we deduce from (\ref{equ3.30}) with the help of the $L^2(Q)$-continuity of the mapping $\lambda\rightarrow (L^\lambda,\partial_\lambda L^\lambda)$ that
\begin{equation}
\begin{split}
&f\big((L^{\lambda'}Q)_{\xi}\big)-f\big((L^{\lambda}Q)_{\xi}\big)\\
&=(\lambda'-\lambda)\int_0^1 E^Q\Big[\Big(\int_0^{\xi}\partial_\mu f\big((L^{\lambda(s)}Q)_{\xi},y\big)dy\Big)\partial_\lambda L^{\lambda(s)}\Big]ds\\
&=(\lambda'-\lambda) E^Q\Big[\Big(\int_0^{\xi}\partial_\mu f\big((L^{\lambda}Q)_{\xi},y\big)dy\Big)\partial_\lambda L^{\lambda}\Big]+\widetilde{R}_{\lambda,\lambda'}(\lambda'-\lambda),\\
\end{split}
\end{equation}
where
$$ \widetilde{R}_{\lambda,\lambda'}=\int_0^1 E^Q\Big[\Big(\int_0^{\xi}\partial_\mu f\big((L^{\lambda(s)}Q)_{\xi},y\big)dy\Big)\partial_\lambda L^{\lambda(s)}\Big]ds-E^Q\Big[\Big(\int_0^{\xi}\partial_\mu f\big((L^{\lambda}Q)_{\xi},y\big)dy\Big)\partial_\lambda L^{\lambda}\Big]. $$
Standard arguments show that $\widetilde{R}_{\lambda,\lambda'}\rightarrow0$, as $\lambda'\rightarrow\lambda$.
This proves that $\lambda\rightarrow f\big((L^\lambda Q)_\xi\big)$ is continuously differentiable, and
\begin{equation}
\partial_\lambda f\big((L^\lambda Q)_\xi\big)=E^Q\Big[\Big(\int_0^{\xi}\partial_\mu f\big((L^{\lambda}Q)_{\xi},y\big)dy\Big)\partial_\lambda L^{\lambda}\Big].
\end{equation}
\end{proof}

After having studied the case of 1-dimensional random variables $\xi$, let us discuss now briefly the multi-dimensional case. Our objective is to show that under suitable assumptions we have also for multi-dimensional $\xi$ the relation (\ref{3.19bis}) stated in Theorem \ref{Theorem3.2} for 1-dimensional $\xi$.

However, while $\xi\in L^4(\Omega,\mathcal{F},Q;\mathbb{R}^d)$ can be now $d$-dimensional, $d\ge 1$, we continue to suppose for simplicity that the coordinate process $B$ in the classical Wiener space $(\Omega,\mathcal{F},Q)$ is 1-dimensional. Our results can be extended in an obvious way to the case of a multi-dimensional coordinate process $B$.
Let us use the notations which have been introduced in this subsection. But, since we will have to consider perturbations of $\xi$, we emphasise the dependence of $F_{Q_L}(L')=f\big((L'Q_L)_\xi\big)$ on $\xi$ by writing
$$F_{Q_L}^\xi(L')=f\big((L'Q_L)_\xi\big),\, L\in\mathcal{L}^Q,\ L'\in\mathcal{L}^{Q_L} \mbox{ and } \xi\in L^4(\Omega,\mathcal{F},Q_L;\mathbb{R}^d).$$

Working first with Assumption 2 and assumptions of the same type as those in Assumption 3, the fact that the Brownian motion $B$ is 1-dimensional, allows to use also for our  $d$-dimensional $\xi$ ($d\ge 1$) all the computations made before in the proof of Proposition \ref{lem1} until equation (\ref{equ**}) including. Assume that $\Lambda$ is a bounded subinterval of $\mathbb{R}$ and let us make the following specification for our \textbf{Assumption 3'}: We let

\noindent \textbf{i)} $\xi$ be a $d$-dimensional smooth Wiener functional:
$$\xi=\varphi\big(B(\Delta_1),...,B(\Delta_n)\big),\ \varphi\in C_b^\infty(\mathbb{R}^n;\mathbb{R}^d).$$
For the notations we refer to Assumption 3; also here we put for simplicity $T=1$.

\noindent \textbf{ii)} Let $L\in\mathcal{L}^Q$ be a smooth Wiener functional (of the above form, for another $\varphi\in C_b^\infty(\mathbb{R}^n)$) such that, for some $0<c<C<+\infty$, it holds $c\le L\le C$.

Putting
$$\gamma_s:=\frac{E^Q[D_sL\,|\,\mathcal{F}_s]}{E^Q[L\,|\,\mathcal{F}_s]},\, s\in[0,1],$$
we get a process which is smooth in the sense of the Malliavin derivative $D=(D_s)_{s\in[0,1]}$, and this process as well as its Malliavin derivatives of any order are bounded over $[0,1]\times\Omega$. Moreover, due to the Clark-Ocone formula, $L_s:=E^Q[L\,|\,\mathcal{F}_s]$ has the form
$$L_s=\exp\Big\{\int_0^s\gamma_rdB_r-\frac12\int_0^s|\gamma_r|^2dr\Big\},\, s\in[0,1].$$

\noindent \textbf{iii)} Let $\theta$ be a smooth Wiener process of the form
$$\begin{array}{lll}
\theta_s=\displaystyle\sum_{i=1}^n\varphi_i\big(B(\Delta_1),...,B(\Delta_{i-1})\big)I_{\Delta_i}(s),\, s\in[0,1],\,
\varphi_i\in C_b^\infty(\mathbb{R}^{i-1}),\, 1\le i\le n.
\end{array}$$

\noindent \textbf{iv)} Let $\lambda\in\Lambda$ be arbitrarily fixed. For any $\lambda'\in\Lambda$, we put $\displaystyle \gamma^{\lambda'}=\gamma+(\lambda'-\lambda)\theta$ (We remark that, in particular, $\gamma^\lambda=\gamma$), and we also observe that $\gamma^{\lambda'},\lambda'\in\Lambda$, is a bounded smooth (in Malliavin's sense) random field over $\Lambda\times[0,1]\times \Omega$ with bounded derivatives of all order w.r.t. $(\lambda,\omega)$.
We also recall the notation
$$ \mathcal{E}_t^{\lambda'}:=\exp\Big\{\int_0^t\gamma^{\lambda'}_sdB_s-\frac12\int_0^t|\gamma_s^{\lambda'}|^2ds\Big\},\, \lambda'\in\Lambda.$$
With it we have, in particular, $\mathcal{E}_t^{\lambda}=L_t,\, \mathcal{E}_1^{\lambda}=L_1=L.$
\smallskip

\noindent \textbf{v)} For all $s\in[0,1]$, we introduce $\displaystyle\mathcal{E}^{\lambda,\lambda'}_t(s):=(1-s)\mathcal{E}^\lambda_t+s\mathcal{E}^{\lambda'}_t\big(=(1-s)L_t+s\mathcal{E}^{\lambda'}_t\big)$.

\noindent Let us point out that because of iv), $\displaystyle \mathcal{E}_s^{\lambda,\lambda'}:=\frac{\mathcal{E}_s^{\lambda'}}{\mathcal{E}_s^{\lambda}}\rightarrow 1,$ as $\lambda'\rightarrow\lambda.$

The above setting allows to work as under Assumption 3.  Thus, revisiting the proof of Proposition \ref{lem1}, we see that, for getting (\ref{equ3.23}), (\ref{equ**}) and (\ref{equ***}), the dimension 1 of $\xi$ was not used; the computations are exactly the same for a higher dimension of $\xi$. So we have, for $t\in[0,1]$,
\begin{equation}\label{d111a}
f\big((\mathcal{E}_t^{\lambda'}Q)_\xi\big)-f\big((\mathcal{E}_t^{\lambda}Q)_\xi\big)=I_{\lambda,\lambda'}+R_{\lambda,\lambda'}, \mbox{ where } R_{\lambda,\lambda'}=o(|\lambda'-\lambda|), \mbox{ as } \lambda'\rightarrow\lambda,
\end{equation}
and
\begin{equation}\label{daaa3}
\begin{split}
I_{\lambda,\lambda'}&=E^{Q}\Big[\int_0^t(\partial_\mu f)\Big((Q_t^\lambda)_{\xi},\bar{\xi}\Big)D_s[\bar{\xi}](\bar{\gamma}_s^{\lambda'}-\bar{\gamma}_s^\lambda)\bar{\mathcal{E}}_s^{\lambda,\lambda'}ds\Big]\\
&=E^{Q_t^\lambda}\Big[\int_0^t(\partial_\mu f)\Big((Q_t^\lambda)_{\xi},\bar{\xi}(B^\lambda)\Big)D_s[\bar{\xi}](B^\lambda)(\bar{\gamma}_s^{\lambda'}-\bar{\gamma}_s^\lambda)(B^\lambda)\bar{\mathcal{E}}_s^{\lambda,\lambda'}(B^\lambda)ds\Big]\\
&=E^{Q_t^\lambda}\Big[\int_0^t(\partial_\mu f)\big((Q_t^\lambda)_{\xi},\xi\big)D_s^\lambda[\xi](\gamma_s^{\lambda'}-\gamma_s^\lambda)\mathcal{E}_s^{\lambda,\lambda'}ds\Big]\\
&=(\lambda'-\lambda)E^{Q_t^\lambda}\Big[\int_0^t(\partial_\mu f)\big((Q_t^\lambda)_{\xi},\xi\big)D_s^\lambda[\xi]\theta_sds\Big]
+o(|\lambda'-\lambda|),\, \mbox{as }\lambda'\rightarrow \lambda.
\end{split}
\end{equation}
(Recall that $D^\lambda=(D^\lambda_s)_{s\in[0,1]}$ denotes the Malliavin derivative w.r.t. the Brownian motion $B^\lambda$). In order to get the latter equality, recall that, due to Assumption iv), $\gamma^{\lambda'}=\gamma^{\lambda}+(\lambda'-\lambda)\theta$ and
\begin{equation}\label{340-0}
\begin{split}
\mathcal{E}_s^{\lambda,\lambda'}=1+\int_0^s(\gamma_r^{\lambda'}-\gamma^{\lambda}_r)\mathcal{E}_r^{\lambda,\lambda'}dB_r^\lambda=1+(\lambda'-\lambda)\int_0^s\theta_r\mathcal{E}_r^{\lambda,\lambda'}dB_r^\lambda,\, s\in[0,1].
\end{split}
\end{equation}
Taking into account that $\mathbb{F}$-adaptedness of the process $\gamma^{\lambda'}-\gamma^{\lambda}=(\lambda'-\lambda)\theta$, we get from \eqref{daaa3} that
\begin{equation}\label{340-1}
\begin{split}
I_{\lambda,\lambda'}&=E^{Q_t^\lambda}\Big[\int_0^t E^{Q_t^\lambda}\big[(\partial_\mu f)\big((Q_t^\lambda)_{\xi},\xi\big)D_s^\lambda[\xi]\,\big|\,\mathcal{F}_s\big](\gamma_s^{\lambda'}-\gamma_s^{\lambda})\mathcal{E}_s^{\lambda,\lambda'}ds\Big]\\
&=E^{Q_t^\lambda}\Big[\int_0^t E^{Q_t^\lambda}\big[(\partial_\mu f)\big((Q_t^\lambda)_{\xi},\xi\big)D_s^\lambda[\xi]\,\big|\,\mathcal{F}_s\big]dB_s^\lambda\int_0^t(\gamma_s^{\lambda'}-\gamma_s^{\lambda})\mathcal{E}_s^{\lambda,\lambda'}dB_s^\lambda\Big]\\
&=E^{Q_t^\lambda}\Big[\int_0^t E^{Q_t^\lambda}\big[(\partial_\mu f)\big((Q_t^\lambda)_{\xi},\xi\big)D_s^\lambda[\xi]\,\big|\,\mathcal{F}_s\big]dB_s^\lambda(\mathcal{E}_t^{\lambda,\lambda'}-1)\Big].
\end{split}
\end{equation}
Hence,
$$ I_{\lambda,\lambda'}=E^{Q}\Big[\int_0^t E^{Q_t^\lambda}\big[(\partial_\mu f)\big((Q_t^\lambda)_{\xi},\xi\big)D_s^\lambda[\xi]\,\big|\,\mathcal{F}_s\big]dB_s^\lambda(\mathcal{E}_t^{\lambda'}-\mathcal{E}_t^{\lambda})\Big]. $$
On the other hand, as $\mathcal{E}^{\lambda'}-\mathcal{E}^{\lambda}$ is an $(\mathbb{F},Q)$-martingale, from Doob's martingale inequality, Burkholder-Davis-Gundy inequality and standard estimates we see that, for some generic constant $C>0$,
\begin{equation*}
\begin{split}
|\lambda'-\lambda|&\leq C|\lambda'-\lambda|E^{Q_t^\lambda}\Big[\Big(\int_0^t|\theta_s|^2(\mathcal{E}_s^{\lambda,\lambda'})^2ds\Big)^{\frac{1}{2}}\Big]\\
&=C E^{Q_t^\lambda}\Big[\Big(\int_0^t\big|(\gamma_s^{\lambda'}-\gamma_s^{\lambda})\mathcal{E}_s^{\lambda,\lambda'}\big|^2ds\Big)^{\frac{1}{2}}\Big]\\
&\leq C E^{Q_t^\lambda}\big[\sup_{s\in[0,t]}|\mathcal{E}_s^{\lambda,\lambda'}-1|\big]=C E^{Q}\Big[\sup_{s\in[0,t]}\Big(\frac{\mathcal{E}_t^{\lambda}}{\mathcal{E}_s^{\lambda}}|\mathcal{E}_s^{\lambda'}-\mathcal{E}_s^{\lambda}|\Big)\Big]\\
&\leq C \Big(E^{Q}\big[\sup_{s\in[0,t]}|\mathcal{E}_s^{\lambda'}-\mathcal{E}_s^{\lambda}|^2\big]\Big)^{\frac{1}{2}}\leq C \Big(E^{Q}\big[|\mathcal{E}_t^{\lambda'}-\mathcal{E}_t^{\lambda}|^2\big]\Big)^{\frac{1}{2}}.
\end{split}
\end{equation*}
Thus,\ $\displaystyle \frac{|\lambda'-\lambda|}{\Big(E^{Q}\big[|\mathcal{E}_t^{\lambda'}-\mathcal{E}_t^{\lambda}|^2\big]\Big)^{\frac{1}{2}}}\leq C, $\
and for $R_{\lambda,\lambda'}$ in \eqref{d111a} we have
$$ \frac{|R_{\lambda,\lambda'}|}{\Big(E^{Q}\big[|\mathcal{E}_t^{\lambda'}-\mathcal{E}_t^{\lambda}|^2\big]\Big)^{\frac{1}{2}}}\leq C\frac{|R_{\lambda,\lambda'}|}{|\lambda'-\lambda|}\rightarrow0,\ \mbox{ as } \lambda'\rightarrow\lambda. $$
Consequently, from \eqref{340-1} and \eqref{d111a},
\begin{equation}\label{340-2}
\begin{split}
&f\big((\mathcal{E}_t^{\lambda'}Q)_\xi\big)-f\big((\mathcal{E}_t^{\lambda}Q)_\xi\big)\\
&=E^{Q}\Big[\int_0^t E^{Q_t^\lambda}\big[(\partial_\mu f)\big((Q_t^\lambda)_{\xi},\xi\big)D_s^\lambda[\xi]\,\big|\,\mathcal{F}_s\big]dB_s^\lambda(\mathcal{E}_t^{\lambda'}-\mathcal{E}_t^{\lambda})\Big]+o(|\mathcal{E}_t^{\lambda'}-\mathcal{E}_t^{\lambda}|_{L^2(Q)}).
\end{split}
\end{equation}
Recall that $\mathcal{E}_t^{\lambda}=L$ and $F_Q^\xi(L'):=f\big((L'Q)_\xi\big),\, L'\in\mathcal{L}^Q$. The equality \eqref{340-2} proves the $L^2(Q)$-differentiability of $L'\rightarrow F_Q^\xi(L')$ at $L$ along special curves $\lambda'\rightarrow\mathcal{E}_t^{\lambda'}$, and the derivative is
\begin{equation}\label{340-3}
DF_Q^\xi(L)=DF_Q^\xi(\mathcal{E}_t^{\lambda})=\int_0^t E^{Q_t^\lambda}\big[(\partial_\mu f)\big((Q_t^\lambda)_{\xi},\xi\big)D_s^\lambda[\xi]\,\big|\,\mathcal{F}_s\big]dB_s^\lambda.
\end{equation}
Combining approximation techniques from Proposition \ref{lem2} with the argument of the proof of Theorem \ref{P3} and Remark \ref{Re3.2}, one can show that there is a Borel measurable function $g:\mathbb{R}^d\rightarrow\mathbb{R}$ such that $DF_Q^\xi(L)=g(\xi),\ Q$-a.s., and $g$ depends on $(Q, L,\xi)$ only through $\big(Q_\xi,(LQ)_\xi\big)$. This allows to write
\begin{equation}\label{340-4}
\partial_L F\big(Q_\xi,(LQ)_\xi,x\big):=g(x),\ x\in\mathbb{R}^d.
\end{equation}
Note that for $d=1$, using the Clark-Ocone formula, we have for the right-hand side of \eqref{340-3} just
\begin{equation*}
\begin{split}
&\int_0^t E^{Q_t^\lambda}\big[(\partial_\mu f)\big((Q_t^\lambda)_{\xi},\xi\big)D_s^\lambda[\xi]\,\big|\,\mathcal{F}_s\big]dB_s^\lambda=\int_0^t E^{Q_t^\lambda}\Big[D_s^\lambda\big[\int_0^\xi(\partial_\mu f)\big((Q_t^\lambda)_{\xi},y\big)dy\big]\,\Big|\,\mathcal{F}_s\Big]dB_s^\lambda\\
=&\int_0^\xi (\partial_\mu f)\big((Q_t^\lambda)_{\xi},y\big)dy-E^{Q_t^\lambda}\Big[\int_0^\xi(\partial_\mu f)\big((Q_t^\lambda)_{\xi},y\big)dy\Big],\ Q\mbox{-a.s.},
\end{split}
\end{equation*}
i.e., we rediscover here our result for $d=1$. For $d\geq2$, let us characterize the relation between $DF_Q^\xi$ and $\partial_\mu f$ in a manner which is more explicit than \eqref{340-3}.

In order to shorten our argument, let us suppose now the $L^2(Q)$-differentiability of $L'\rightarrow F_{Q}^\xi(L'):=f\big((L'Q)_\xi\big)$ at all $L'\in\mathcal{L}^{Q}\cap L^2(\Omega,\mathcal{F},Q)$, and identifying its derivative $(DF_Q^\xi)(L')\in L(L^2_0(\Omega,\mathcal{F},Q);\mathbb{R})$ with $DF_Q^\xi(L')\in L^2_0(\Omega,\mathcal{F},Q)$ such that $DF_Q^\xi(L')(\eta)=E^Q[DF_Q^\xi(L')\eta]$, for all $\eta\in L^2_0(\Omega,\mathcal{F},Q)$, we obtain
\begin{equation}\label{d111}
\begin{split}
&f\big((\mathcal{E}_t^{\lambda'}Q)_\xi\big)-f\big((\mathcal{E}_t^{\lambda}Q)_\xi\big)=\displaystyle F^\xi_{Q}(\mathcal{E}_t^{\lambda'})- F^\xi_{Q}(\mathcal{E}_t^{\lambda})=\int_0^1\partial_s [F^\xi_{Q}\big(\mathcal{E}_t^{\lambda,\lambda'}(s)\big)]ds\\
=&E^{Q}\big[DF^\xi_{Q}(\mathcal{E}_t^{\lambda}) (\mathcal{E}^{\lambda'}_t-\mathcal{E}^{\lambda}_t)\big]+
\int_0^1E^{Q}\Big[\Big(DF^\xi_{Q}\big(\mathcal{E}_t^{\lambda,\lambda'}(s)\big)-DF^\xi_{Q}(\mathcal{E}_t^{\lambda}) \Big)(\mathcal{E}^{\lambda'}_t- \mathcal{E}^{\lambda}_t)\Big]ds\\
=&I^\xi(\lambda,\lambda')+R^\xi(\lambda,\lambda'),
\end{split}
\end{equation}
where  $\displaystyle I^\xi(\lambda,\lambda'):=E^{Q}\big[DF^\xi_{Q}(\mathcal{E}_t^{\lambda}) (\mathcal{E}^{\lambda'}_t-\mathcal{E}^{\lambda}_t)\big],$\   and $$\displaystyle R^\xi(\lambda,\lambda'):=\int_0^1E^{Q}\Big[\Big(DF^\xi_{Q}\big(\mathcal{E}_t^{\lambda,\lambda'}(s)\big)-DF^\xi_{Q}(\mathcal{E}_t^{\lambda}) \Big)(\mathcal{E}^{\lambda'}_t- \mathcal{E}^{\lambda}_t)\Big]ds.$$

Before making the computation for $I^\xi(\lambda,\lambda')$, we remark that $0<c\le L_t\big(=\mathcal{E}^\lambda_t\big)\le C\in\mathbb{R}_+$ implies that with the $L^2(Q)$-differentiability of $F_Q^\xi$ at $L_t$ we also have the $L^2(Q_t^\lambda)$-differentiability of $F_{Q_t^\lambda}^\xi:\mathcal{L}^{Q^\lambda_t}\cap L^2(\Omega,\mathcal{F},Q^\lambda_t)\rightarrow\mathbb{R}$ at $1$
(Recall that $Q_t^\lambda=\mathcal{E}_t^\lambda Q(=L_tQ)$), and
\begin{equation}\label{tri}
\begin{split}
DF^\xi_Q(\mathcal{E}_t^\lambda)&=\displaystyle DF^\xi_{Q^\lambda_t}(1)-E^Q\big[DF^\xi_{Q^\lambda_t}(1)\big],\\
DF^\xi_{Q^\lambda_t}(1)&=\displaystyle DF^\xi_Q(\mathcal{E}_t^\lambda)-E^{Q^\lambda_t}\big[DF^\xi_Q(\mathcal{E}_t^\lambda)\big].
\end{split}
\end{equation}
Moreover, we recall from Theorem \ref{P3} and its proof (see also Lemma \ref{Lemjing}) that, for $L\in\mathcal{L}^Q\cap L^2(\Omega,\mathcal{F},Q)$, the differentiability of $F_{Q_t^\lambda}^\xi$ at $1$ implies that there is a Borel function $g:\mathbb{R}^d\rightarrow \mathbb{R}$ such that $DF_{Q_t^\lambda}^\xi(1)=g(\xi)$, $Q_L$-a.s. and $g$ depends on $(Q,\mathcal{E}_t^\lambda,\xi)$ only through the law $(Q_t^\lambda)_\xi$, which is expressed by the notation $\partial_1 F\big((Q_t^\lambda)_\xi,x\big):=g(x),\, x\in \mathbb{R}^d$. So we have $\displaystyle DF_{Q_t^\lambda}^\xi(1)=\partial_1 F \big((Q_t^\lambda)_\xi,\xi\big),\, Q_t^\lambda(\sim Q)\mbox{-a.s.}$ Note that, for $L_t=\mathcal{E}_t^\lambda$, $\partial_L F$ introduced in \eqref{340-4} and $\partial_1 F$ thanks to \eqref{tri} are related by
$$ \partial_{L_t}F\big(Q_\xi,(L_t Q)_\xi,x\big)=\partial_1 F\big((Q_{L_t})_\xi,x\big)-E^Q\big[\partial_1 F\big((Q_{L_t})_\xi,\xi\big)\big],\ x\in\mathbb{R}^d. $$
Consequently, from \eqref{tri} and the definition $\mathcal{E}_t^{\lambda,\lambda'}=\mathcal{E}_t^{\lambda'}/\mathcal{E}_t^{\lambda}$,
\begin{equation*}
\begin{split}
I^\xi(\lambda,\lambda')&=E^{Q}[DF^\xi_{Q}(\mathcal{E}_t^{\lambda}) (\mathcal{E}^{\lambda'}_t- \mathcal{E}^{\lambda}_t)]=E^{Q}[DF^\xi_{Q^\lambda_t}(1)(\mathcal{E}^{\lambda'}_t- \mathcal{E}^{\lambda}_t)]\\
&=E^{Q^\lambda_t}[DF^\xi_{Q^\lambda_t}(1) (\mathcal{E}^{\lambda,\lambda'}_t- 1)]= E^{Q^\lambda_t}\big[\partial_1 F\big((Q_t^\lambda)_\xi,\xi\big)(\mathcal{E}^{\lambda, \lambda'}_t- 1)\big],
\end{split}
\end{equation*}
and as $\displaystyle\mathcal{E}^{\lambda,\lambda'}_t=1+\int_0^t(\gamma_s^{\lambda'}-\gamma_s^\lambda)\mathcal{E}^{\lambda,\lambda'}_sdB^\lambda_s$ (See  \eqref{equ3.19}), the above relation yields
\begin{equation}\label{d111b}\begin{split}
I^\xi(\lambda,\lambda')
&=\displaystyle E^{Q^\lambda_t}\Big[(\partial_1 F)\big((Q_t^\lambda)_\xi,\xi\big)\int_0^t (\gamma_s^{\lambda'}-\gamma_s^\lambda)\mathcal{E}^{\lambda, \lambda'}_sdB^\lambda_s\Big]\\
&=\displaystyle (\lambda'-\lambda)E^{Q^\lambda_t}\Big[(\partial_1 F)\big((Q_t^\lambda)_\xi,\xi\big)\int_0^t \theta_s\mathcal{E}^{\lambda,\lambda'}_sdB^\lambda_s\Big].\\
\end{split}\end{equation}
Hence, supposing that $\partial_1 F\big((Q_t^\lambda)_\xi,\cdot\big):\mathbb{R}^d\rightarrow\mathbb{R}$ is continuously differentiable and the derivative is bounded, we get from the integration by parts formula for the Malliavin derivative w.r.t. the $Q_t^\lambda$-Brownian motion $B^\lambda$
\begin{equation}\label{d111c}\begin{split}
I^\xi(\lambda,\lambda')&=\displaystyle (\lambda'-\lambda)E^{Q^\lambda_t}\Big[(\partial_1 F)\big((Q_t^\lambda)_\xi,\xi\big)\int_0^t \theta_s\mathcal{E}^{\lambda,\lambda'}_sdB^\lambda_s\Big]\\
&=\displaystyle (\lambda'-\lambda)E^{Q^\lambda_t}\Big[\partial_x(\partial_1 F)\big((Q_t^\lambda)_\xi,\xi\big)\int_0^t D^\lambda_s[\xi]\theta_s\mathcal{E}^{\lambda,\lambda'}_sds\Big]\\
&=\displaystyle (\lambda'-\lambda)E^{Q^\lambda_t}\Big[\partial_x(\partial_1 F)\big((Q_t^\lambda)_\xi,\xi\big)\int_0^t D^\lambda_s[\xi]\theta_sds\Big]+r(\lambda,\lambda'),
\end{split}\end{equation}
with $\displaystyle r(\lambda,\lambda')=(\lambda'-\lambda)^2 E^{Q^\lambda_t}\Big[\partial_x(\partial_1 F)\big((Q_t^\lambda)_\xi,\xi\big)\int_0^t\Big(D^\lambda_s[\xi]\theta_s\int_0^s\theta_r\mathcal{E}_r^{\lambda,\lambda'}dB_r^\lambda\Big)ds\Big]=o(|\lambda'-\lambda|)$, as $\lambda'\rightarrow\lambda.$

In order to estimate
\begin{equation*}
R^\xi(\lambda,\lambda')=\displaystyle\int_0^1E^{Q} \Big[\Big( DF^\xi_{Q}\big(\mathcal{E}_t^{\lambda,\lambda'}(s)\big) -DF^\xi_{Q}(\mathcal{E}_t^{\lambda}) \Big)( \mathcal{E}^{\lambda'}_t- \mathcal{E}^{\lambda}_t)\Big]ds,
\end{equation*}
we assume that the derivative w.r.t. the density $DF^\xi_{Q}(\cdot):\mathcal{L}^{Q}\cap L^2(\Omega,\mathcal{F},Q)\rightarrow \mathbb{R}$ is $L^2(Q)$-Lipschitz.
\begin{remark}\label{Reshort}
We note that
$$ DF^\xi_Q(L)=\partial_L F\big(Q_\xi,(LQ)_\xi,\xi\big),\, L\in\mathcal{L}^{Q}\cap L^2(\Omega,\mathcal{F},Q), $$
is $L^2(Q)$-Lipschitz w.r.t. $L$, if there is a constant $C\in\mathbb{R}_+$ such that
$$ \big|\partial_L F\big(Q_\xi,\mu,\xi\big)-\partial_L F\big(Q_\xi,\mu',\xi\big)\big|\leq CW_1(\mu,\mu'),\ \mu,\mu'\in\mathcal{P}_2(\mathbb{R}^d).  $$
Indeed, using the Kantorovich-Rubinstein duality, one checks easily that
$$ W_1\big((LQ)_\xi,(L'Q)_\xi\big)\leq E^Q[|L-L'|\cdot|\xi|]\leq \big(E^Q[|\xi|^2]\big)^{\frac{1}{2}}\big(E^Q[|L-L'|^2]\big)^{\frac{1}{2}}, $$
for all $L,\,L'\in\mathcal{L}^{Q}\cap L^2(\Omega,\mathcal{F},Q)$.
\end{remark}

Then, recalling that $\mathcal{E}^\lambda_t=L_t=E^Q[L\,|\,\mathcal{F}_t]$ is bounded,
we have
\begin{equation*}
\begin{split}
|R^\xi(\lambda,\lambda')|&\le CE^Q\big[|\mathcal{E}^{\lambda'}_t -\mathcal{E}^\lambda_t|^2\big]\le CE^{Q^\lambda_t} \big[|\mathcal{E}^{\lambda,\lambda'}_t -1|^2\big]=\displaystyle CE^{Q^\lambda_t} \Big[\Big|\int_0^t (\gamma_s^{\lambda'}-\gamma_s^\lambda)\mathcal{E}^{\lambda, \lambda'}_sdB^\lambda_s \Big|^2\Big]\\
&=\displaystyle C|\lambda'-\lambda|^2 E^{Q^\lambda_t} \Big[\Big|\int_0^t \theta_s\mathcal{E}^{\lambda,\lambda'}_sdB^\lambda_s \Big|^2\Big]=\displaystyle C|\lambda'-\lambda|^2 E^{Q^\lambda_t} \Big[\int_0^t |\theta_s\mathcal{E}^{\lambda,\lambda'}_s|^2ds\Big].
\end{split}
\end{equation*}
Recalling that the smooth Wiener step process $\theta$ is bounded
and $\mathcal{E}^{\lambda, \lambda'}_s,\, s\in[0,1],\, \lambda'\in\Lambda$, is bounded in $L^{Q^\lambda_t}$, we obtain
$$\displaystyle |R^\xi(\lambda,\lambda')|\le C|\lambda'-\lambda|^2,\, \lambda'\in\Lambda.$$
Together with (\ref{d111}) and (\ref{d111c}) this yields
\begin{equation}\label{d111d}
f\big((\mathcal{E}_t^{\lambda'}Q)_\xi\big)-f\big((\mathcal{E}_t^{\lambda}Q)_\xi\big)=\displaystyle (\lambda'-\lambda)E^{Q^\lambda_t}\Big[\partial_x(\partial_1 F)\big((Q_t^\lambda)_\xi,\xi\big)\int_0^t D^\lambda_s[\xi]\theta_sds\Big]+o(|\lambda'-\lambda|),
\end{equation}
as $\lambda'\rightarrow\lambda.$ Comparing this relation with (\ref{d111a}) combined with (\ref{daaa3}), we get
\begin{equation*}
E^{Q^\lambda_t}\Big[\partial_x(\partial_1 F)\big((Q_t^\lambda)_\xi,\xi\big)\int_0^t D^\lambda_s[\xi]\theta_sds\Big]=E^{Q_t^\lambda}\Big[(\partial_\mu f)\big((Q_t^\lambda)_{\xi},\xi\big)\int_0^tD_s^\lambda[\xi]\theta_sds\Big].
\end{equation*}
Hence, for $\Gamma_\cdot:L^2(\Omega,\mathcal{F},Q^\lambda_t;\mathbb{R}^d)\rightarrow L^2(\Omega,\mathcal{F},Q^\lambda_t;\mathbb{R}^d)$ defined by
$$\displaystyle \Gamma_\xi:=\partial_x(\partial_1 F)\big((Q_t^\lambda)_\xi,\xi\big)-(\partial_\mu f)\big((Q_t^\lambda)_{\xi},\xi\big),$$
it holds
\begin{equation}\label{daab1}
E^{Q^\lambda_t}\Big[\Gamma_\xi\int_0^tD^\lambda_s[\xi]\theta_sds\Big]=0,
\end{equation}
for all smooth step process $\theta$ and for any smooth Wiener functional $\xi$.

Let us assume that the functions
$\partial_x(\partial_1 F)(\cdot,\cdot),\, \partial_\mu f(\cdot,\cdot):\mathcal{P}_2(\mathbb{R}^d)\times\mathbb{R}^d\rightarrow \mathbb{R}^d$ are bounded and continuous with a continuity modulus $\rho:\mathbb{R}_+\rightarrow\mathbb{R}_+,\, \rho(0+)=0$, which implies that $\Gamma_\vartheta$ is bounded and
\begin{equation}\label{daaac1}
|\Gamma_{\vartheta}-\Gamma_{\zeta}|\le 2\rho\Big(W_2\big((Q^\lambda_t)_\vartheta,(Q^\lambda_t)_\zeta\big)+|\vartheta-\zeta|\Big),\ \vartheta,\ \zeta\in L^2(\Omega,\mathcal{F},Q^\lambda_t).
\end{equation}

We denote by $e_i$ ($1\le i\le d$) the $i$-th unit vector in $\mathbb{R}^d$: $e_{i,j}:=1$ for $j=i$, $e_{i,j}:=0$ for $j\not=i$. Let $t=1$. For $\varepsilon>0$ we put $\xi^{i}_\varepsilon:=E^{Q}[\xi\,|\,\mathcal{F}_{t-\varepsilon}]+\varepsilon(B^\lambda_t-B^\lambda_{t-\varepsilon})e_i$. Then $D^\lambda_s[\xi^{i}_\varepsilon]=\varepsilon e_i,\, s\in[t-\varepsilon,t]$, and choosing the smooth Wiener process $\theta^\varepsilon$ such that $\theta_s^\varepsilon=0,\, s\in[0,t-\varepsilon]$, we get from (\ref{daab1})
\begin{equation}\label{daab2}
E^{Q^\lambda_t}\Big[\Gamma_{\xi^{i}_\varepsilon}e_i\int_{t-\varepsilon}^t\theta_s^\varepsilon ds\Big]=0,\, 1\le i\le d,
\end{equation}
for all smooth Wiener process $\theta^\varepsilon$. On the other side it is easy to check that
$\displaystyle\big\{\int_{t-\varepsilon}^t\theta_s^\varepsilon ds\, :\, \theta^\varepsilon\, \mbox{ smooth}$ $\displaystyle\mbox{Wiener functonal}\,\big\}$ is dense in $L^2(\Omega,\mathcal{F},Q)$, and thus also in $L^2(\Omega,\mathcal{F},Q^\lambda_t).$ Hence, it follows from (\ref{daab2}) that $\Gamma_{\xi^{i}_\varepsilon}e_i=0$, $1\le i\le d$, $Q^\lambda_t$-a.s. Finally, taking into account (\ref{daaac1})
and that $\xi^{i}_\varepsilon\rightarrow \xi$, $Q$-a.s. and in $L^2(\Omega,\mathcal{F},Q^\lambda_t)$, as $\varepsilon\rightarrow 0$, $1\le i\le d$, we conclude $\Gamma_\xi=0$, $Q^\lambda_t$-a.s., i.e.,
$$\displaystyle \partial_x(\partial_1 F)\big((Q_t^\lambda)_\xi,x\big)=\partial_\mu f\big((Q_t^\lambda)_{\xi},x\big),$$
which holds a priori $(Q_t^\lambda)_\xi$-a.s., but using the continuity of $\partial_x(\partial_1 F)(\cdot,\cdot),\, \partial_\mu f(\cdot,\cdot):\mathcal{P}_2(\mathbb{R}^d)\times\mathbb{R}^d\rightarrow \mathbb{R}^d$, we obtain that the equality holds for all $x\in\mathbb{R}^d$ and for all $\xi\in L^2(\Omega,\mathcal{F},Q^\lambda_t)$. Finally, for $t=1$ we have $Q^\lambda_t=LQ=Q_L$, i.e.,
\begin{equation}\label{d2a}
\partial_x(\partial_1 F)\big((Q_L)_\xi,x\big)=\partial_\mu f\big((Q_L)_{\xi},x\big),
\end{equation}
for all $x\in\mathbb{R^d},\, \xi\in L^2(\Omega,\mathcal{F},Q_L)$ and $L\in\mathcal{L}^Q$ smooth Wiener functional bounded from below by a strictly positive constant.

Finally, Proposition \ref{lem2} and the arguments developed in its proof show how to extend this result to general bounded densities $L\in\mathcal{L}^Q$ which are bounded from below by a strictly positive constant. This allows to state the following:
\begin{theorem}
Let $L\in\mathcal{L}^Q$ be such that, for some real constants $C,c>0$, $c\le L\le C$, and suppose that, for all $\xi\in L^4(\Omega,\mathcal{F},Q;\mathbb{R}^d)$ and for $f:\mathcal{P}_2(\mathbb{R}^d)\rightarrow \mathbb{R}$ satisfying Assumption 1 we have that the function $F_Q^\xi(L'):=f\big((L'Q)_\xi\big),\, L'\in \mathcal{L}^Q\cap L^2(\Omega,\mathcal{F},Q),$ is $L^2(Q)$-differentiable, $DF_Q^\xi(\cdot)$ is $L^2(Q)$-Lipschitz, $\partial_1 F\big((Q_L)_\xi,\cdot\big)$ is continuously differentiable, and $\partial_x(\partial_1 F)(\cdot,\cdot),\,\partial_\mu f(\cdot,\cdot):\mathcal{P}_2(\mathbb{R}^d)\times\mathbb{R}^d\rightarrow \mathbb{R}$ are bounded and continuous (with continuity modulus). Then,
$$\partial_x(\partial_1 F)\big((Q_L)_\xi,x\big)=\partial_\mu f\big((Q_L)_{\xi},x\big),\, x\in\mathbb{R}^d.$$
\end{theorem}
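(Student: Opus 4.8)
The plan is to deduce the statement from formula \eqref{d2a} by an approximation argument. Recall that \eqref{d2a} already gives the asserted identity whenever $L\in\mathcal{L}^Q$ is a \emph{smooth} Wiener functional bounded from below by a strictly positive constant (and from above by a finite one); the two points to settle are the removal of this smoothness and the upgrade of $f$ from Assumption 2 to Assumption 1. The genuinely new work lies in the first point: approximating a general density $L$ with $0<c\le L\le C$ by smooth Wiener functionals that are themselves bounded away from $0$ and bounded above, so that \eqref{d2a} applies to them.

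For the construction I would follow the device used in the proof of Proposition \ref{lem2}, but keeping the two-sided bounds. Fix a refining sequence of partitions $0=t_0^k<\dots<t^k_{N_k}=1$ with mesh tending to $0$, put $\mathcal{G}_k:=\sigma\big(B(\Delta^k_1),\dots,B(\Delta^k_{N_k})\big)$ and $L^{(k)}:=E^Q[L\,|\,\mathcal{G}_k]$. Martingale convergence gives $L^{(k)}\to L$ in $L^2(Q)$, and conditioning preserves the bound $c\le L^{(k)}\le C$. Writing $L^{(k)}=\psi_k\big(B(\Delta^k_1),\dots,B(\Delta^k_{N_k})\big)$ with $\psi_k:\mathbb{R}^{N_k}\to[c,C]$ Borel, mollify: $\psi_k^\varepsilon:=\psi_k*\chi_\varepsilon$ for a standard mollifier $\chi_\varepsilon$. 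Then $\psi_k^\varepsilon\in C^\infty_b(\mathbb{R}^{N_k})$ (convolution with $\chi_\varepsilon$ makes all derivatives bounded), $\psi_k^\varepsilon$ still takes its values in $[c,C]$ because it is an average of values of $\psi_k$, and $\psi_k^\varepsilon\big(B(\Delta^k)\big)\to L^{(k)}$ in $L^2(Q)$ as $\varepsilon\downarrow0$ by dominated convergence. Normalising by $E^Q[\psi_k^\varepsilon(B(\Delta^k))]\to1$ and choosing $\varepsilon=\varepsilon_k\downarrow0$ suitably, one obtains smooth Wiener functionals $L^n\in\mathcal{L}^Q$ with $c/2\le L^n\le 2C$ for $n$ large and $L^n\to L$ in $L^2(Q)$.

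Applying \eqref{d2a} to each $L^n$ — legitimate since $L^n$ is a smooth density bounded below by a strictly positive constant and the standing regularity hypotheses of the theorem ($L^2(Q)$-differentiability of $F_Q^\xi$, $L^2(Q)$-Lipschitz continuity of $DF_Q^\xi$, continuous differentiability of $\partial_1 F$, boundedness and continuity of $\partial_x(\partial_1 F)$ and $\partial_\mu f$) are assumed for all the laws involved — yields
\begin{equation*}
\partial_x(\partial_1 F)\big((Q_{L^n})_\xi,x\big)=\partial_\mu f\big((Q_{L^n})_\xi,x\big),\qquad x\in\mathbb{R}^d,\ n\ge1 .
\end{equation*}
It then remains to pass to the limit. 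The Kantorovich-Rubinstein duality gives $W_1\big((Q_{L^n})_\xi,(Q_L)_\xi\big)\le E^Q[|\xi|\,|L^n-L|]\le|\xi|_{L^2(Q)}\,|L^n-L|_{L^2(Q)}\to0$, so $(Q_{L^n})_\xi\to(Q_L)_\xi$ weakly; and since $\xi\in L^4(\Omega,\mathcal{F},Q;\mathbb{R}^d)$ and the $L^n$ are uniformly bounded, $\big|\int_{\mathbb{R}^d}|x|^2(Q_{L^n})_\xi(dx)-\int_{\mathbb{R}^d}|x|^2(Q_L)_\xi(dx)\big|=\big|E^Q[(L^n-L)|\xi|^2]\big|\le|L^n-L|_{L^2(Q)}\,\big|\,|\xi|^2\,\big|_{L^2(Q)}\to0$, so in fact $(Q_{L^n})_\xi\to(Q_L)_\xi$ in $\mathcal{P}_2(\mathbb{R}^d)$. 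Letting $n\to\infty$ in the displayed identity and using the assumed continuity of $\partial_x(\partial_1 F)(\cdot,\cdot)$ and $\partial_\mu f(\cdot,\cdot)$ on $\mathcal{P}_2(\mathbb{R}^d)\times\mathbb{R}^d$ gives $\partial_x(\partial_1 F)\big((Q_L)_\xi,x\big)=\partial_\mu f\big((Q_L)_\xi,x\big)$ for every $x\in\mathbb{R}^d$. The passage from Assumption 2 to Assumption 1 for $f$ is handled afterwards by the same routine approximation already indicated after Theorem \ref{Theorem3.2} (approximating $f$ by functions with a linear modulus while respecting its modulus $\rho$), continuity in $x$ then restoring the identity on all of $\mathbb{R}^d$.

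The main obstacle is the first step. The Dol\'ean-Dade exponential approximation used in Proposition \ref{lem2}, although it gives densities in $\cap_{1<p<\infty}L^p(Q)$ with good differentiability, does \emph{not} produce approximants bounded away from $0$, so it cannot be plugged directly into \eqref{d2a}; one must instead stay inside the class of smooth densities with a strictly positive lower bound, which is why the conditional-expectation-plus-mollification construction above (rather than Proposition \ref{lem2} verbatim) is needed. The only other delicate point is that the convergence $(Q_{L^n})_\xi\to(Q_L)_\xi$ must be in $W_2$, not merely $W_1$, in order to invoke continuity of $\partial_\mu f$ and $\partial_x(\partial_1 F)$ on $\mathcal{P}_2(\mathbb{R}^d)$; this is exactly what the hypothesis $\xi\in L^4$, combined with the uniform bounds on the $L^n$, is there to provide.
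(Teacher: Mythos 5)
Your proposal is correct and follows the paper's own route: the paper establishes the identity \eqref{d2a} for smooth Wiener functionals $L$ with $c\le L\le C$ and then disposes of the general case in a single sentence appealing to the approximation ideas of Proposition \ref{lem2}, and you carry out precisely that approximation, passing to the limit in $W_2$ using $\xi\in L^4$ and the uniform bounds on the approximants. Your observation that the Dol\'ean--Dade step of Proposition \ref{lem2} does not preserve the two-sided bounds, so that one must instead rely on the conditional-expectation-plus-mollification part of that construction, is a correct and useful sharpening of the paper's remark rather than a departure from its method.
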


\smallskip
At the end of this section let us briefly discuss how the derivative w.r.t. the density $L\in\mathcal{L}^Q$ of a function $f\big((LQ)_\xi\big)$ presents, when $f\big((LQ)_\xi\big)=\Phi(f^{LQ}_\xi)$ is defined as a differentiable function $\Phi$ of the density $f^{LQ}_\xi$ of the law of a random variable $\xi\in L^2(\Omega,\mathcal{F},LQ)$. Such an approach basing on densities of random variables has been chosen in recent works by Bensoussan et al.; see, for instance, \cite{BFY15} and the papers cited therein. However, we adapt this setting slightly, in order to have it in coherence with the theory we have developed in the present work.

Let us denote by $\mathcal{L}^{\lambda_1}$ the set of all probability densities over $(\mathbb{R},\mathcal{B}(\mathbb{R}),dx)$. Given a complete probability space $(\Omega,\mathcal{F},Q)$ and a function $\Phi: \mathcal{L}^{\lambda_1}\cap L^2(\mathbb{R},\mathcal{B} (\mathbb{R}),dx)\rightarrow\mathbb{R}$, we put $f(Q_\xi):=\Phi(f^Q_\xi)$, for $\xi\in L^2(\Omega,\mathcal{F},Q)$ with density $f_\xi^Q\in L^2(\mathbb{R},\mathcal{B} (\mathbb{R}),dx)\rightarrow\mathbb{R}$.
We suppose $\Phi:\mathcal{L}^{\lambda_1}\cap L^2(\mathbb{R},\mathcal{B} (\mathbb{R}),dx)\rightarrow\mathbb{R}$ is $L^2(dx)$-differentiable in the sense of Definition \ref{def3.3}, i.e., for any $h\in \mathcal{L}^{\lambda_1}\cap L^2(\mathbb{R},\mathcal{B} (\mathbb{R}),dx)$ there exists $(D\Phi)(h)\in L(L^2_0(\mathbb{R},\mathcal{B} (\mathbb{R}),dx),\mathbb{R})$ such that, identified by the Riesz Representation Theorem with some $D\Phi(h,\cdot)\in L^2_0(\mathbb{R},\mathcal{B} (\mathbb{R}),dx)$, we have
\begin{equation*}
\begin{split}
\Phi(h')-\Phi(h)&=(D\Phi)(h)(h'-h)+o(|h'-h|_{L^2(\mathbb{R},dx)})\\
&=\int_{\mathbb{R}}D\Phi(h,x)\big(h'(x)-h(x)\big)dx+o(|h'-h|_{L^2(\mathbb{R},dx)}).
\end{split}
\end{equation*}
We suppose that $D\Phi(h,\cdot)$ is differentiable, and the derivative belongs to $L^2(\mathbb{R},\mathcal{B} (\mathbb{R}),dx).$

Let $\xi\in L^2(\Omega,\mathcal{F},Q)$ with density $f^Q_\xi\in  L^2(\mathbb{R},\mathcal{B} (\mathbb{R}),dx)$ and $\eta\in L^2(\Omega,\mathcal{F},Q)$ be such that the couple $(\xi,\eta)$ has a density $f^Q_{(\xi,\eta)}\in L^2(\mathbb{R}^2,\mathcal{B} (\mathbb{R}^2),dz)$ which is continuously differentiable in its first variable. Then, for all $\varepsilon\in\mathbb{R}$, $\xi+\varepsilon\eta$ has the density
$$ f^Q_{\xi+\varepsilon\eta}(x)=\int_{\mathbb{R}}f^Q_{(\xi,\eta)}(x-\varepsilon y,y)dy,\, x\in\mathbb{R},$$
and, under suitable additional integrability assumptions for $\partial_x f^Q_\xi(\cdot,\cdot)$,
$$ \partial_\varepsilon f^Q_{\xi+\varepsilon\eta}(x)_{|\varepsilon=0}=-\int_{\mathbb{R}}y\partial_xf^Q_{(\xi,\eta)}(x,y)dy,\,  x\in\mathbb{R}.$$
Then, using the integration by parts formula,
\begin{equation*}
\begin{split}
&\partial_\varepsilon f(Q_{\xi+\varepsilon\eta})_ {|\varepsilon=0}= (D\Phi)(f^Q_\xi) \Big(\!\!-\!\!\int_{\mathbb{R}}y\partial_xf^Q_{(\xi,\eta)}(\cdot,y)dy\Big)=\int_{\mathbb{R}}\Big(\!\!-\!\!\int_{\mathbb{R}} D\Phi(f^Q_\xi,x)y\partial_x f^Q_{(\xi,\eta)}(x,y)dx\Big)dy\\
=&\int_{\mathbb{R}}\Big(\int_{\mathbb{R}} \partial_xD\Phi(f^Q_\xi,x)yf^Q_{(\xi,\eta)}(x,y)dx\Big)dy= E^Q\big[\partial_xD\Phi(f^Q_\xi,\xi)\eta\big],
\end{split}
\end{equation*}

\noindent which shows that, if $f:\mathcal{P}_2(\mathbb{R}) \rightarrow\mathbb{R}$ defined above is differentiable, then, because of the density of the set of  random variables $\eta$ in $L^2(\Omega,\mathcal{F},Q)$ satisfying the above assumptions on the density $f_{(\xi,\eta)}$, we have $\partial_\mu f(Q_\xi,\xi)=\partial_xD\Phi(f^Q_\xi,\xi),\, Q$-a.s., i.e.,
\begin{equation}\label{BFY1}
\partial_\mu f(Q_\xi,x)=\partial_xD\Phi(f^Q_\xi,x),\ Q_\xi(dx)\mbox{-a.s.}
\end{equation}

Let now $L\in \mathcal{L}^Q$ for which we suppose, for simplicity, $0<c\le L\le C\in\mathbb{R}_+.$ We remark that, for $\xi\in L^2(\Omega,\mathcal{F},Q)$ with density $f^Q_\xi\in L^2(\mathbb{R},\mathcal{B} (\mathbb{R}),dx)$, the density under $Q_L:=LQ$ is, obviously, given by $f^{LQ}_\xi(x)=E^Q[L\,|\, \xi=x]f^Q_\xi(x),\, x\in\mathbb{R}$, and it belongs to $L^2(\Omega,\mathcal{F},Q_L)(=L^2(\Omega,\mathcal{F},Q)).$
Given another $L'\in\mathcal{L}^Q$ for which we suppose, for simplicity, $0<c\le L'\le C\in\mathbb{R}_+,$ we put $L^\varepsilon:=(1-\varepsilon)L+\varepsilon L'\in\mathcal{L}^Q,\, \varepsilon\in[0,1].$ Then $\displaystyle \partial_\varepsilon f_\xi^{L^\varepsilon Q}(x)=E^Q[L'-L\,|\, \xi=x]f_\xi^Q(x),\, x\in\mathbb{R},$\ and
\begin{equation*}
\begin{split}
&\partial_\varepsilon\Phi(f^{L^\varepsilon Q}_\xi)_{|\varepsilon=0}=(D\Phi)(f^{L Q}_\xi)\big(E^Q[L'-L\,|\,\xi=x]f^Q_\xi(x)\big)\\
=&\int_{\mathbb{R}} D\Phi(f^{L Q}_\xi,x)E^Q[L'-L\,|\,\xi=x]f^Q_\xi(x)dx=E^Q\big[D\Phi(f^{L Q}_\xi,\xi)(L'-L)\big].
\end{split}
\end{equation*}

\noindent Consequently, if $F^\xi_Q(L):=f\big((LQ)_\xi\big),\, L\in\mathcal{L}^Q\cap L^2(\Omega,\mathcal{F},Q)$, is differentiable (in the sense of Definition \ref{def3.3}), then
$$ DF^\xi_Q(L)=D\Phi(f^{LQ}_\xi,\xi)- E^Q\big[D\Phi(f^{LQ}_\xi, \xi)\big],\, Q\mbox{-a.s.}$$
On the other hand, recall that
\begin{equation*}
\begin{split}
 DF^\xi_{Q_L}(1)&=DF^\xi_Q(L)-E^{Q_L}\big[DF^\xi_Q(L)\big]=D\Phi(f^{LQ}_\xi,\xi)-E^{Q_L}\big[D\Phi(f^{LQ}_\xi,\xi)\big],
\, Q\mbox{-a.s.},
\end{split}
\end{equation*}

\noindent from where we deduce that
\begin{equation}\label{BFY2}
\partial_1 F\big((Q_L)_\xi,x\big)=D\Phi(f^{LQ}_\xi,x)-E^{Q_L}\big[D\Phi(f^{LQ}_\xi,\xi)\big],
\, (Q_L)_\xi\mbox{-a.s.}
\end{equation}
Finally, combining \eqref{BFY1} and \eqref{BFY2}, we see that also here it holds that
$$\partial_x(\partial_1 F)\big((Q_L)_\xi,x\big)=\partial_x D\Phi(f^{LQ}_\xi,x)=\partial_\mu f\big((Q_L)_\xi,x\big),\, \, (Q_L)_\xi(dx)\mbox{-a.s.}$$

\section{Appendix}

\subsection{Fr\'{e}chet differentiability of the lifted function}

\begin{lemma}\label{LemA.1}
Let $f:\mathcal{P}_2(\mathbb{R}^d)\rightarrow\mathbb{R}$ be differentiable, and let $Q'$ be any probability measure on the Radon space $(\Omega,\mathcal{F})$. Then $f$ lifted w.r.t. the probability measure $Q'$, $\widetilde{f}'(\eta'):=f(Q'_{\eta'})$, $\eta'\in L^2(\Omega,\mathcal{F},Q';\mathbb{R}^d)$, is Fr\'{e}chet differentiable over $L^2(\Omega,\mathcal{F},Q';\mathbb{R}^d)$, and $D\widetilde{f}'(\eta')=(\partial_\mu f)(Q'_{\eta'},\eta')$, $Q'$-a.s.
\end{lemma}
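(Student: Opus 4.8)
The plan is to show that the differentiability of $f:\mathcal{P}_2(\mathbb{R}^d)\to\mathbb{R}$ at a fixed law, which a priori only gives Fr\'echet differentiability of the lift with respect to \emph{one} fixed probability space, automatically transfers to Fr\'echet differentiability of the lift with respect to \emph{any} probability measure $Q'$ on the Radon space $(\Omega,\mathcal{F})$. The key structural fact is that $(\Omega,\mathcal{F})$ is rich enough (in the sense of Section 2), so that all laws in $\mathcal{P}_2(\mathbb{R}^d)$ and $\mathcal{P}_2(\mathbb{R}^d\times\mathbb{R}^d)$ are realised by random variables over $(\Omega,\mathcal{F},Q')$.

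First I would fix $\eta'\in L^2(\Omega,\mathcal{F},Q';\mathbb{R}^d)$ and let $\mu:=Q'_{\eta'}\in\mathcal{P}_2(\mathbb{R}^d)$. Since $f$ is differentiable at $\mu$, the Borel function $\partial_\mu f(\mu,\cdot):\mathbb{R}^d\to\mathbb{R}^d$ exists and, by the defining relation (\ref{equ2.4}), for any probability space $(\bar\Omega,\bar{\mathcal F},\bar P)$ and any $\vartheta_0\in L^2(\bar\Omega,\bar{\mathcal F},\bar P;\mathbb{R}^d)$ with $\bar P_{\vartheta_0}=\mu$ one has $f(\bar P_\vartheta)-f(\bar P_{\vartheta_0})=\bar E[\partial_\mu f(\mu,\vartheta_0)\cdot(\vartheta-\vartheta_0)]+o(|\vartheta-\vartheta_0|_{L^2(\bar P)})$. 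The point is that the function $h_0=\partial_\mu f(\mu,\cdot)$ appearing here depends only on $\mu$, not on the choice of $(\bar\Omega,\bar{\mathcal F},\bar P,\vartheta_0)$ --- this is exactly what was recalled in Section 2 from \cite{C12}. So the plan is: given $\eta'$ and an arbitrary perturbation $\zeta'\in L^2(\Omega,\mathcal{F},Q';\mathbb{R}^d)$, I want to show $\widetilde f'(\eta'+\zeta')-\widetilde f'(\eta')=E^{Q'}[\partial_\mu f(\mu,\eta')\cdot\zeta']+o(|\zeta'|_{L^2(Q')})$, uniformly as $|\zeta'|_{L^2(Q')}\to0$.

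The technical heart of the argument is to obtain this remainder estimate \emph{uniformly}. The naive application of (\ref{equ2.4}) to the pair $(\eta',\eta'+\zeta')$ on the space $(\Omega,\mathcal{F},Q')$ gives only a remainder $o(|\zeta'|_{L^2(Q')})$ for $\zeta'\to0$ along a fixed direction, but Fr\'echet differentiability requires the $o(\cdot)$ to be uniform over all $\zeta'$. To get this, I would argue by contradiction: suppose there is $\varepsilon_0>0$ and a sequence $\zeta'_n$ with $|\zeta'_n|_{L^2(Q')}=:\delta_n\to0$ but $|\widetilde f'(\eta'+\zeta'_n)-\widetilde f'(\eta')-E^{Q'}[\partial_\mu f(\mu,\eta')\cdot\zeta'_n]|\ge\varepsilon_0\delta_n$. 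Now pass to the joint laws $\nu_n:=Q'_{(\eta',\zeta'_n/\delta_n)}\in\mathcal{P}_2(\mathbb{R}^d\times\mathbb{R}^d)$; since the second marginals have unit $L^2$-norm and, modulo passing to a subsequence, one can realise a limiting joint law $\nu$ (using tightness and the richness of $(\Omega,\mathcal{F})$ to represent $\nu$ by a pair $(\eta'',\zeta'')$ over $(\Omega,\mathcal{F},Q')$ with $Q'_{(\eta'',\zeta'')}=\nu$), one transfers the contradiction to a statement about $f$ along the curve $\eta''+s\zeta''$, $s\to0$, and invokes the genuine (directional, hence along this single curve sufficient) differentiability of $f$ at $\mu=Q'_{\eta''}$ to derive $o(s)$, contradicting $\ge\varepsilon_0 s$. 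An alternative, cleaner route avoiding compactness: realise simultaneously $\eta'$ and a ``generic'' perturbation on a product extension and exploit that (\ref{equ2.4}) is scale-invariant, so that the remainder depends on $\zeta'$ only through $|\zeta'|_{L^2(Q')}$ and the normalised direction, reducing uniformity to continuity of $\partial_\mu f(\mu,\cdot)$ composed with the $L^2$-continuity of the lift; this is the argument I expect \cite{C12} / \cite{BLPR17} already make, and I would cite it.

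I expect the main obstacle to be precisely this passage from directional to uniform (Fr\'echet) differentiability on the new space $Q'$: one must be careful that $\partial_\mu f(\mu,\cdot)$ is the \emph{same} function regardless of the lifting space, and that no measurability or integrability issue arises when re-representing joint laws over $(\Omega,\mathcal{F},Q')$ --- here the Radon/rich-enough hypothesis on $(\Omega,\mathcal{F})$ is essential and should be invoked explicitly. Once uniformity is secured, the identification of the derivative is immediate: the continuous linear functional $\zeta'\mapsto E^{Q'}[\partial_\mu f(\mu,\eta')\cdot\zeta']$ is bounded on $L^2(\Omega,\mathcal{F},Q';\mathbb{R}^d)$ because $\partial_\mu f(\mu,\eta')\in L^2(\Omega,\mathcal{F},Q';\mathbb{R}^d)$ (finite second moment of $\mu$ plus the growth bound on $\partial_\mu f$), so by the Riesz representation theorem $D\widetilde f'(\eta')=\partial_\mu f(Q'_{\eta'},\eta')$, $Q'$-a.s., which is the claimed formula.
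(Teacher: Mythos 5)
You have correctly isolated the crux --- passing from the Fr\'echet differentiability of the lift on one reference space to a remainder that is uniform over all perturbations on an arbitrary $Q'$ --- and your final Riesz-representation step is fine. But the central argument you propose does not close. In your contradiction scheme you normalise the directions $\zeta'_n/\delta_n$, extract a (sub)sequential limit of the joint laws $\nu_n=Q'_{(\eta',\zeta'_n/\delta_n)}$, and then invoke differentiability of $f$ along the single limiting curve $s\mapsto Q'_{\eta''+s\zeta''}$. This controls only the limiting direction: the errors you must bound involve the laws of $\eta'+\zeta'_n$ for the \emph{varying} directions $\zeta'_n/\delta_n$, and directional differentiability along one limiting curve gives no estimate on these without exactly the uniformity in the direction that you are trying to establish. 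Moreover, weak (or even $W_2$-) convergence of $\nu_n$ does not let you replace the $n$-th increment of $f$ by the increment along the limit curve up to $o(\delta_n)$. So the contradiction is not actually derived. Your ``alternative cleaner route'' points in the right direction but leaves the decisive step unspecified.

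The paper's proof achieves uniformity directly, with no compactness and no contradiction, and the missing concrete step is a \emph{coupling of pairs}. Fix the point of differentiation $\eta'_0\in L^2(\Omega,\mathcal{F},Q';\mathbb{R}^d)$ and choose $\eta_0$ on the reference space $(\Omega,\mathcal{F},Q)$ with $Q_{\eta_0}=Q'_{\eta'_0}$ at which $\widetilde{f}$ is Fr\'echet differentiable. For each competitor $\eta'$, Proposition 3.1 of \cite{BCL} (this is where the Radon/rich-enough hypothesis is used) yields $\eta$ on $(\Omega,\mathcal{F},Q)$ with $Q_{(\eta,\eta_0)}=Q'_{(\eta',\eta'_0)}$. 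Then $|\eta-\eta_0|_{L^2(Q)}=|\eta'-\eta'_0|_{L^2(Q')}$, the increment satisfies $f(Q'_{\eta'})-f(Q'_{\eta'_0})=f(Q_{\eta})-f(Q_{\eta_0})$, and the candidate linear term transfers exactly, $E^Q[\partial_\mu f(Q_{\eta_0},\eta_0)\cdot(\eta-\eta_0)]=E^{Q'}[\partial_\mu f(Q'_{\eta'_0},\eta'_0)\cdot(\eta'-\eta'_0)]$, because it is a functional of the joint law alone. Hence the remainder on $(\Omega,\mathcal{F},Q')$ coincides, for every $\eta'$, with a remainder of the Fr\'echet expansion of $\widetilde{f}$ at the \emph{fixed} point $\eta_0$, which is $o(|\eta-\eta_0|_{L^2(Q)})$ uniformly in the direction by the very definition of Fr\'echet differentiability; the uniform $o(\cdot)$ therefore carries over verbatim, and the identification $D\widetilde{f}'(\eta'_0)=\partial_\mu f(Q'_{\eta'_0},\eta'_0)$ follows as you indicate.
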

\begin{proof}
We suppose that the differentiability of $f:\mathcal{P}_2(\mathbb{R}^d)\rightarrow\mathbb{R}$ is defined through the lifted function $\widetilde{f}(\eta):=f(Q_{\eta})$, $\eta\in L^2(\Omega,\mathcal{F},Q;\mathbb{R}^d)$, for the probability $Q$ fixed in Section 3. Now, for an arbitrarily given $\eta'_0\in L^2(\Omega,\mathcal{F},Q';\mathbb{R}^d)$, let $\eta_0\in L^2(\Omega,\mathcal{F},Q;\mathbb{R}^d)$ be such that $Q_{\eta_0}=Q'_{\eta'_0}$ and $\widetilde{f}$ is Fr\'echet differentiable at $\eta_0$, and for $\eta'\in L^2(\Omega,\mathcal{F},Q';\mathbb{R}^d)$ with $|\eta'-\eta'_0|_{L^2(Q)}\rightarrow0$, let $\eta\in L^2(\Omega,\mathcal{F},Q;\mathbb{R}^d)$ be such that $Q_{(\eta,\eta_0)}=Q'_{(\eta',\eta'_0)}$ (see Proposition 3.1 in \cite{BCL}). Then $|\eta-\eta_0|_{L^2(Q)}=|\eta'-\eta'_0|_{L^2(Q')}\rightarrow0$, and
\begin{equation*}
\begin{split}
&\widetilde{f}'(\eta')-\widetilde{f}'(\eta'_0)=f(Q'_{\eta'})-f(Q'_{\eta'_0})=f(Q_{\eta})-f(Q_{\eta_0})=\widetilde{f}(\eta)-\widetilde{f}(\mu_0)\\
=&D\widetilde{f}(\eta_0)(\eta-\eta_0)+o(|\eta-\eta_0|_{L^2(Q)})=E^Q\big[(\partial_\mu f)(Q_{\eta_0},\eta_0)(\eta-\eta_0)\big]+o(|\eta-\eta_0|_{L^2(Q)})\\
=&E^{Q'}\big[(\partial_\mu f)(Q'_{\eta'_0},\eta'_0)(\eta'-\eta'_0)\big]+o(|\eta'-\eta'_0|_{L^2(Q')}).
\end{split}
\end{equation*}
It follows that $\widetilde{f}':L^2(\Omega,\mathcal{F},Q';\mathbb{R}^d)\rightarrow\mathbb{R}$ is Fr\'{e}chet differentiable, and $D\widetilde{f}'(\eta'_0)=(\partial_\mu f)(Q'_{\eta'_0},\eta'_0)$, $Q'$-a.s.

\end{proof}

\subsection{Proof of Proposition \ref{lem2}}
This part is devoted to the proof of Proposition \ref{lem2}.
\begin{proof} (of Proposition \ref{lem2}).
The proof is split into seven steps. For simplicity we choose $T=1$ and $\Lambda=\mathbb{R}$. Recall that the mapping $\lambda\rightarrow L^\lambda\in\mathcal{L}^Q\cap L^2(\Omega,\mathcal{F},Q)$ is supposed to be continuously $L^2(Q)$-differentiable.

\noindent\textbf{Step 1.} We introduce the partition $t_i^n:=i\cdot 2^{-n}$ ($0\leq i\leq N_n=2^n$) for the interval $[0,1]$, and we put $\Delta_i^n=(t_{i-1}^n,t_i^n]$. Let us consider the increasing sequence of sub $\sigma$-fields $\mathcal{G}_n:=\sigma\big\{B(\Delta_i^n)(:=B_{t_i^n}-B_{t_{i-1}^n}),\ 1\leq i\leq N_n\big\}\vee\mathcal{N}_Q,\ n\geq1,$ with limit $\bigvee\limits_{n\geq1}\mathcal{G}_n$ $\big(:=\sigma\big\{\bigcup_{n\ge 1}\mathcal{G}_n\big\}\big)=\mathcal{F}$.

We define now $L_n^\lambda:=E^Q[L^\lambda\,|\,\mathcal{G}_n],\ \lambda\in\Lambda,\ n\geq1$. Then we have the following properties.\\
\textbf{a)} $L_n^\lambda\in\mathcal{L}^Q\cap L^2(\Omega,\mathcal{F},Q),\, \lambda\in\Lambda,\, n\ge 1$.\\
\textbf{b)} The mapping $\lambda\rightarrow L_n^\lambda$ is continuously $L^2(Q)$-differentiable, and $\partial_\lambda L_n^\lambda=E^Q[\partial_\lambda L^\lambda\,|\,\mathcal{G}_n],\ Q$-a.s., $\lambda\in\Lambda,\ n\geq1$. Indeed,
\begin{equation*}
\begin{split}
&\big|L_n^{\lambda'}-L_n^{\lambda}-(\lambda'-\lambda)E^Q[\partial_\lambda L^\lambda\,|\,\mathcal{G}_n]\big|_{L^2(Q)}=\big|E^Q[L^{\lambda'}-L^{\lambda}-(\lambda'-\lambda)\partial_\lambda L^\lambda\,|\,\mathcal{G}_n]\big|_{L^2(Q)}\\
\leq&\big|L^{\lambda'}-L^{\lambda}-(\lambda'-\lambda)\partial_\lambda L^\lambda\big|_{L^2(Q)}=o(|\lambda'-\lambda|),\ \lambda'\rightarrow\lambda.
\end{split}
\end{equation*}
Moreover, for all $\lambda\in\Lambda$,
$$ \big|\partial_\lambda L_n^{\lambda'}-\partial_\lambda L_n^{\lambda}\big|_{L^2(Q)}=\big|E^Q[\partial_\lambda L^{\lambda'}-\partial_\lambda L^{\lambda}\,|\,\mathcal{G}_n]\big|_{L^2(Q)}\leq\big|\partial_\lambda L^{\lambda'}-\partial_\lambda L^{\lambda}\big|_{L^2(Q)}\rightarrow0,\ \lambda'\rightarrow\lambda. $$
\textbf{c)} From the martingale convergence theorem we have\\
\indent i) $L_n^\lambda=E^Q[L^\lambda\,|\,\mathcal{G}_n] \xrightarrow[n\rightarrow\infty]{L^2(Q)}E^Q[L^\lambda\,|\,\mathcal{F}]=L^\lambda,\ \lambda\in\Lambda$, and\\
\indent ii) $\partial_\lambda L_n^\lambda=E^Q[\partial_\lambda L^\lambda\,|\,\mathcal{G}_n]\xrightarrow[n\rightarrow\infty]{L^2(Q)}E^Q[\partial_\lambda L^\lambda\,|\,\mathcal{F}]=\partial_\lambda L^\lambda,\ \lambda\in\Lambda$.\\
\textbf{d)} As $\lambda\rightarrow L^\lambda$ and $\lambda\rightarrow \partial_\lambda L^\lambda$ are $L^2(Q)$-continuous mappings, we have for all $\lambda,\,\lambda'\in\Lambda$ with $[\lambda,\lambda']:=\big\{\lambda(s):=s\lambda'+(1-s)\lambda,\, s\in[0,1]\big\}\subset\Lambda$,
$$E^Q\big[|(L_n^{\lambda(s)},\partial_\lambda L_n^{\lambda(s)})|^2\big]\leq E^Q\big[|(L^{\lambda(s)},\partial_\lambda L^{\lambda(s)})|^2\big]\leq \max\limits_{s\in[0,1]}E^Q\big[|(L^{\lambda(s)},\partial_\lambda L^{\lambda(s)})|^2\big]<+\infty,$$
$s\in[0,1],\ n\geq1$. This combined with c) shows that
$$(L_n^{\lambda(\cdot)},\partial_\lambda L_n^{\lambda(\cdot)})\xrightarrow[n\rightarrow\infty]{L^2(dsdQ)}(L^{\lambda(\cdot)},\partial_\lambda L^{\lambda(\cdot)}).$$

\noindent\textbf{Step 2}. Let us fix now an arbitrary $n\ge 1$, and put $N=2^n,\ \Delta_i=\Delta_i^n,\ t_i=t_i^n$. In Step 1 we have seen that it suffices to prove the approximation result for mappings $\lambda\rightarrow L^\lambda\in\mathcal{L}^Q\cap L^2(\Omega,\mathcal{G}_n,Q)$ which are continuously $L^2(Q)$-differentiable. As $(L,\partial L):\Lambda\times\Omega\rightarrow \mathbb{R}^2$ is $\mathcal{B}(\Lambda)\otimes\mathcal{G}_n$-measurable, there exists a Borel function $(h,g):\Lambda\times\mathbb{R}^N\rightarrow\mathbb{R}^2$ such that
$$ (L^\lambda,\partial_\lambda L^\lambda)=(h^\lambda,g^\lambda)\big(B(\Delta_1),...,B(\Delta_N)\big),\ \lambda\in\Lambda.$$
Observe that the probability law
$\mu_N:=Q_{\theta_N}$ of $\theta_N:=\big(B(\Delta_1),...,B(\Delta_N)\big)$ is just the Gaussian law $\mathcal{N}(0,\frac{1}{N}I_N)$ with mean zero and covariance matrix $\frac{1}{N}I_N$ ($I_N$ is the unit matrix over $\mathbb{R}^N$), defined over $(\mathbb{R}^N,\mathcal{B}(\mathbb{R}^N))$. We observe that  $\lambda\rightarrow h^\lambda\in L^2(\mathbb{R}^N,\mu_N)$
is continuously differentiable, and $\partial_\lambda h^\lambda=g^\lambda,\, \mu_N$-a.s., $\lambda\in\Lambda$.
Indeed, $|h^{\lambda'}-h^\lambda-(\lambda'-\lambda)g^\lambda|_{L^2(\mu_N)}= |L^{\lambda'}-L^\lambda-(\lambda'-\lambda)\partial_\lambda L^\lambda|_{L^2(Q)}=o(|\lambda'-\lambda|),\, \lambda'\in\Lambda$ with $\lambda'\rightarrow \lambda.$ The $L^2(\mu_N)$-continuity of $\lambda\rightarrow g^\lambda(=\partial_\lambda h^\lambda)$ follows similarly from the $L^2(Q)$-continuity of $\lambda\rightarrow \partial_\lambda L^\lambda.$

\noindent\textbf{Step 3.} For $\ell\ge 1$, let $\psi_\ell\in C_{\ell,b}^\infty(\mathbb{R})$ such that $-\ell\le \psi_\ell\le \ell,$ $\psi_\ell(r)=r$ for $|r|\le \ell-2,$ $\psi_\ell(r)=\ell$ for $r\ge\ell$, $\psi_\ell(r)=-\ell$ for $r\le -\ell,$ and $0\le \partial_r\psi_\ell\le 1.$ Moreover, let  $\varphi_\ell\in C^\infty_b(\mathbb{R}^N)$ be such that $0\le \varphi_\ell\le 1$, $\varphi_\ell(x)=1$ for $|x|\le \ell-2$, $\varphi_\ell(x)=0$ for $|x|\ge \ell$, and $|\partial_x\varphi_\ell|\le 1.$ We need also a function $\widetilde{\varphi}\in C^\infty_b(\mathbb{R})$ with the same properties as $\varphi_\ell$, but defined on $\mathbb{R}$ instead of $\mathbb{R}^N$. Given these functions, we define for $h^\lambda$ introduced in the preceding Step 2,
$$\displaystyle h_\ell(\lambda,x):=h^\lambda_\ell(x):=\psi_\ell\big(h^{\psi_\ell(\lambda)}(x)\big)\varphi_\ell(x)\widetilde{\varphi}_\ell(\lambda),\, (\lambda,x)\in \mathbb{R}\times\mathbb{R}^N.$$
Obviously, $|h_\ell|\leq \ell$ on $\mathbb{R}\times\mathbb{R}^N$, $h_\ell(\lambda,x)=0$ if $|\lambda|\ge
\ell$ or $|x|\ge\ell$, and from the dominated convergence theorem and the continuous $L^2(\mu_N)$-differentiability of $\lambda\rightarrow h^\lambda$ we obtain $(h^\lambda_\ell,\partial_\lambda h^\lambda_\ell)\rightarrow (h^\lambda,\partial_\lambda h^\lambda)$ in $L^2{(\mathbb{R}^N,\mu_N)}$ as well as $(h^{\lambda(\cdot)}_\ell,\partial_\lambda h^{\lambda(\cdot)}_\ell)\rightarrow (h^{\lambda(\cdot)},\partial_\lambda h^{\lambda(\cdot)})$ in $L^2{([0,1]\times\mathbb{R}^N,ds\mu_N(dx))}$, as $\ell\rightarrow +\infty.$ In particular, it holds that $\displaystyle \int_{\mathbb{R}^N}h^\lambda_\ell(x)\mu_N(dx)\rightarrow \int_{\mathbb{R}^N}h^\lambda(x)\mu_N(dx)=E^Q[L^\lambda]=1$, as $\ell\rightarrow+\infty$.

\noindent\textbf{Step 4.} Let us now approximate $h^\lambda_\ell,\, \ell\ge 1,$  with the help of Sobolev's mollification procedure. For this we consider $\chi^2\in C^\infty(\mathbb{R}^N;\mathbb{R}_+)$ with supp$(\chi^2)\subset\overline{B_1(0)}\big(:=\big\{x\in\mathbb{R}^N\,\big|\, |x|\le 1\big\}\big)$ and  $\int_{\mathbb{R}^N}\chi^2(y)dy=1$. In the same way we introduce $\chi^1\in C^\infty(\mathbb{R}^N;\mathbb{R}_+)$, but now for $N=1$, and, given $\varepsilon>0$, we put $\chi^1_\varepsilon(\lambda)=\frac{1}{\varepsilon}\chi^1(\frac{1}{\varepsilon}\lambda),\, \chi^2_\varepsilon(x)=\frac{1}{\varepsilon^N}\chi^1(\frac{1}{\varepsilon}x)$ and $\chi_\varepsilon(\lambda,x)=\chi^1_\varepsilon(\lambda)\chi^2_\varepsilon(x),\, (\lambda,x)\in\mathbb{R}\times\mathbb{R}^N$. Given $h_\ell(\lambda,x)=h^\lambda_\ell(x)$ from the preceding step, we define $h_{\ell,\varepsilon}$ by the convolution
$$ h^\lambda_{\ell,\varepsilon}(x):=h_{\ell,\varepsilon}(\lambda,x):=(h_\ell*\chi_\varepsilon)(\lambda,x)=\int_{\mathbb{R}\times\mathbb{R}^N}h_\ell(\lambda-\lambda',x-x')\chi_\varepsilon(\lambda',x')d\lambda'dx',\ (\lambda,x)\in\mathbb{R}\times\mathbb{R}^N.$$
We remark that, for all $\varepsilon\in(0,1)$, $h_{\ell,\varepsilon}\in C^\infty_b(\mathbb{R}\times\mathbb{R}^N)$ with supp$(h_{\ell,\varepsilon})\subset(-\ell-1,\ell+1)\times B_{\ell+1}(0)$. Hence, for all $\lambda\in\Lambda=\mathbb{R}$,
\begin{equation*}
\begin{split}
&|\partial_\lambda h^\lambda_{\ell,\varepsilon}-\partial_\lambda h^\lambda_{\ell}|^2_{L^2(\mu_N)}\le C_N\int_{\mathbb{R}^N}|\partial_\lambda h^\lambda_{\ell,\varepsilon}(x)-\partial_\lambda h^\lambda_{\ell}(x)|^2dx\\
\le& C_N\int_{\mathbb{R}}\int_{\mathbb{R}^N}|(\partial_\lambda h^{\lambda-\lambda'}_{\ell}*\chi^2_\varepsilon)(x)-\partial_\lambda h^\lambda_{\ell}(x)|^2dx\cdot \chi^1_\varepsilon(\lambda')d\lambda'\\
\le& C_N\int_{\mathbb{R}}\int_{\mathbb{R}^N}|(\partial_\lambda h^{\lambda-\lambda'}_{\ell}*\chi^2_\varepsilon)(x)-( \partial_\lambda h^\lambda_{\ell}*\chi^2_\varepsilon)(x)|^2dx \cdot \chi^1_\varepsilon(\lambda')d\lambda'\\
& \qquad+C_N\int_{\mathbb{R}^N}|( \partial_\lambda h^{\lambda}_{\ell}*\chi^2_\varepsilon)(x)- \partial_\lambda h^\lambda_{\ell}(x)|^2dx\\
\le& C_N\int_{\mathbb{R}}\int_{\mathbb{R}^N}|\partial_\lambda h^{\lambda-\lambda'}_{\ell}(x)- \partial_\lambda h^\lambda_{\ell}(x)|^2dx \cdot \chi^1_\varepsilon(\lambda')d\lambda'+C_N\int_{\mathbb{R}^N}|( \partial_\lambda h^{\lambda}_{\ell}*\chi^2_\varepsilon)(x)- \partial_\lambda h^\lambda_{\ell}(x)|^2dx,
\end{split}
\end{equation*}
where from Step 3 and the $L^2(\mu_N)$-continuity of $\lambda\rightarrow \partial_\lambda h^\lambda$ we have
$$\int_{\mathbb{R}}\int_{\mathbb{R}^N}|\partial_\lambda h^{\lambda-\lambda'}_{\ell}(x)- \partial_\lambda h^\lambda_{\ell}(x)|^2dx \cdot \chi^1_\varepsilon(\lambda')d\lambda' \le C_{N,l}\sup_{|\lambda'|\le\varepsilon}|\partial_\lambda h^{\lambda-\lambda'}_{\ell}- \partial_\lambda h^\lambda_{\ell}|^2_{L^2(\mu_N)}\rightarrow 0,\mbox{ as }\varepsilon\rightarrow 0, $$
and, on the other hand, also
$$\int_{\mathbb{R}^N}|( \partial_\lambda h^{\lambda}_{\ell}*\chi^2_\varepsilon)(x)- \partial_\lambda h^\lambda_{\ell}(x)|^2dx\rightarrow 0,\, \mbox{ as } \varepsilon\rightarrow 0. $$
This latter convergence is a well-known property of the Sobolev mollification. Consequently, for all $\lambda\in\Lambda,$
$$\displaystyle |\partial_\lambda h^\lambda_{\ell,\varepsilon}-\partial_\lambda h^\lambda_{\ell}|^2_{L^2(\mu_N)}\rightarrow 0,\mbox{ and analogously, } | h^\lambda_{\ell,\varepsilon}- h^\lambda_{\ell}|^2_{L^2(\mu_N)}\rightarrow 0,\mbox{ as }\varepsilon\rightarrow 0.$$

\noindent In the same way, using the continuous $L^2(\mu_N)$-differentiability of $\lambda\rightarrow  h^\lambda_\ell$ and, thus, the boundedness of $\lambda\rightarrow (h^\lambda,\partial_\lambda h^\lambda)$ in $L^2(\mathbb{R}^N,\mu_N)$, we get that also
$$(h^{\lambda(\cdot)}_{\ell,\varepsilon}, \partial_\lambda h^{\lambda(\cdot)}_{\ell,\varepsilon})\rightarrow (h^{\lambda(\cdot)}_{\ell}, \partial_\lambda h^{\lambda(\cdot)}_{\ell}) \mbox{ in } L^2([0,1]\times\mathbb{R}^N,ds\mu_N(dx)), \mbox{ as } \varepsilon\rightarrow 0.$$

\noindent\textbf{Step 5.} Let  $h^\lambda_\varepsilon:=h^\lambda_{\ell,\varepsilon_\ell}$ from Step 4  be such that $(h^\lambda_{\ell,\varepsilon_\ell},\partial_\lambda h^\lambda_{\ell,\varepsilon_\ell})\rightarrow (h^\lambda,\partial_\lambda h^\lambda)$ in $L^2(\mu_N)$ and $(h^{\lambda(\cdot)}_{\ell,\varepsilon_\ell},\partial_\lambda h^{\lambda(\cdot)}_{\ell,\varepsilon_\ell})\rightarrow (h^{\lambda(\cdot)},\partial_\lambda h^{\lambda(\cdot)})$ in $L^2([0,1]\times \mathbb{R}^N,ds\mu_N(dx))$, as $\ell\rightarrow+\infty$, where $\varepsilon_\ell\rightarrow0$ is suitably chosen. We define $F^\lambda_\varepsilon:=h^\lambda_\varepsilon \big(B(\Delta_1),...,B(\Delta_N)\big)\in\mathcal{S},\ \lambda\in\Lambda=\mathbb{R},\ \varepsilon>0$. From Step 4 we know that $\lambda\rightarrow F^\lambda_\varepsilon$ is in $C^\infty_b(\mathbb{R})$, and $\partial_\lambda F^\lambda_\varepsilon=(\partial_\lambda h^\lambda_\varepsilon)\big(B(\Delta_1),...,B(\Delta_N)\big)$. Moreover, due to our choice,
\begin{equation*}
\begin{split}
(F^\lambda_\varepsilon,\partial_\lambda F^\lambda_\varepsilon)&\xrightarrow[\varepsilon\rightarrow0]{L^2(Q)}(L^\lambda,\partial_\lambda L^\lambda),\\
(F^{\lambda(\cdot)}_\varepsilon,\partial_\lambda F^{\lambda(\cdot)}_\varepsilon)&\xrightarrow[\varepsilon\rightarrow0]{L^2(dsdQ)}(L^{\lambda(\cdot)},\partial_\lambda L^{\lambda(\cdot)}).
\end{split}
\end{equation*}
Also note that $0\leq F^\lambda_\varepsilon\leq C_\varepsilon(=\ell),\ \lambda\in\Lambda,\ \varepsilon>0$. Let us define $L_\varepsilon^\lambda:=\frac{\varepsilon+F_\varepsilon^\lambda}{\varepsilon+E^Q[F_\varepsilon^\lambda]}$. Obviously, $L^\lambda_\varepsilon$ is a smooth Wiener functional in $\mathcal{S}$ and belongs to $\mathcal{L}^Q$. Moreover, $\frac{\varepsilon}{\varepsilon+C_\varepsilon} \le L_\varepsilon^\lambda\leq \frac{\varepsilon+C_\varepsilon}{\varepsilon}$, and standard estimates allow to show that
$\lambda\rightarrow L_\varepsilon^\lambda$ belongs to $C^\infty_b(\mathbb{R})$, and
$$\displaystyle \partial_\lambda L^\lambda_\varepsilon=\frac{\partial_\lambda F^\lambda_\varepsilon}{\varepsilon+E^Q[F^\lambda_\varepsilon]}-\frac{E^Q[\partial_\lambda F_\varepsilon^\lambda](\varepsilon+F^\lambda_\varepsilon)}{(\varepsilon+E^Q[F_\varepsilon^\lambda])^2.}$$
Moreover, as $(F^\lambda_\varepsilon,\partial_\lambda F^\lambda_\varepsilon)\xrightarrow[\varepsilon\rightarrow0]{L^2(Q)} (L^\lambda,\partial_\lambda L^\lambda)$, we have,
$E^Q[F^\lambda_\varepsilon]\rightarrow E^Q[L^\lambda]=1$ and $E^Q[\partial_\lambda F^\lambda_\varepsilon]\rightarrow E^Q[\partial_\lambda L^\lambda]=0$, and from the formula for $(L^\lambda_\varepsilon,\partial_\lambda L^\lambda_\varepsilon)$ we conclude that
\begin{equation*}
(L^\lambda_\varepsilon,\partial_\lambda L^\lambda_\varepsilon)\xrightarrow[\varepsilon\rightarrow0]{L^2(Q)}(L^\lambda,\partial_\lambda L^\lambda), \mbox{ for all }\lambda\in\Lambda.
\end{equation*}
Finally, using a similar argument, we obtain also
\begin{equation*}
(L^{\lambda(\cdot)}_\varepsilon,\partial_\lambda L^{\lambda(\cdot)}_\varepsilon)\xrightarrow[\varepsilon\rightarrow0]{L^2(dsdQ)}(L^{\lambda(\cdot)},\partial_\lambda L^{\lambda(\cdot)}).
\end{equation*}

\noindent\textbf{Step 6.} We now use $L_\varepsilon^\lambda$ from Step 5 and define $M_s^{\lambda,\varepsilon}:=E^Q[L_\varepsilon^\lambda\,|\,\mathcal{F}_s],\ s\in[0,1]$. Obviously $M^{\lambda,\varepsilon}$ is an $(\mathbb{F},Q)$-martingale, and $\frac{\varepsilon}{\varepsilon+C_\varepsilon}\leq M^{\lambda,\varepsilon}\leq\frac{\varepsilon+C_\varepsilon}{\varepsilon}$. Thanks to the Clark-Ocone formula, as $L^\lambda_\varepsilon\in\mathcal{S}$, we have for the bounded adapted process $Z_s^{\lambda,\varepsilon}=E^Q[D_s L_\varepsilon^\lambda\,|\,\mathcal{F}_s],\ s\in[0,1]$, that
\begin{equation}\label{Aaa}
 L_\varepsilon^\lambda=1+\int_0^1 Z_s^{\lambda,\varepsilon}dB_s,\ Q\mbox{-a.s.},\ \lambda\in\Lambda,
\end{equation}
and
$$ M_t^{\lambda,\varepsilon}=1+\int_0^t Z_s^{\lambda,\varepsilon}dB_s=1+\int_0^t \frac{Z_s^{\lambda,\varepsilon}}{M_s^{\lambda,\varepsilon}}M_s^{\lambda,\varepsilon}dB_s,\ t\in[0,T]. $$
We introduce the process $\gamma_s^{\lambda,\varepsilon}:=\frac{Z_s^{\lambda,\varepsilon}}{M_s^{\lambda,\varepsilon}},\ s\in[0,1]$. Obviously, $\gamma^{\lambda,\varepsilon}\in L^2_{\mathbb{F}}([0,1]\times\Omega,dsdQ)$ and
$$ L^{\lambda}_{\varepsilon}=M_1^{\lambda,\varepsilon}=\exp\Big\{\int_0^1 \gamma_s^{\lambda,\varepsilon}dB_s-\frac{1}{2}\int_0^1 |\gamma_s^{\lambda,\varepsilon}|^2ds\Big\}. $$
In order to study the properties of the process $\gamma^{\lambda,\varepsilon}$ we recall that $L_\varepsilon^\lambda=\widetilde{h}^\lambda_\varepsilon\big(B(\Delta_1),\dots,B(\Delta_N)\big)$, where $\widetilde{h}^\lambda_\varepsilon(x)=\frac{\varepsilon+h^\lambda_\varepsilon(x)}{\varepsilon+E^Q[F^\lambda_\varepsilon]}$ belongs to $C^\infty_b(\mathbb{R}\times\mathbb{R}^N)$. Let us make the simplifying assumption that, for any fixed $\varepsilon>0$, $L^\lambda_\varepsilon=\widetilde{h}^\lambda(B_1),\ \widetilde{h}\in C_b^\infty(\mathbb{R}\times\mathbb{R})$. Without any difficulty the computation can be extended to the case of a general $L_\varepsilon^\lambda=\widetilde{h}^\lambda_\varepsilon\big(B(\Delta_1),\dots,B(\Delta_N)\big)$ with $\widetilde{h}_\varepsilon\in C^\infty_b(\mathbb{R}\times\mathbb{R}^N)$.

We remark that, taking the Malliavin derivative of $L^\lambda_\varepsilon=\widetilde{h}^\lambda(B_1)$, we get
$D_s L^\lambda_\varepsilon=(\widetilde{h}^\lambda)'(B_1),\, s\in[0,1],$ and hence
$$ E^Q[D_s L^\lambda_\varepsilon\,|\,\mathcal{F}_s]=E^Q[(\widetilde{h}^\lambda)'(B_1)\,|\,\mathcal{F}_s]=\frac{1}{\sqrt{2\pi}}\int_{\mathbb{R}}(\widetilde{h}^\lambda)'(B_s+\sqrt{1-s}\cdot x)e^{-\frac{1}{2}x^2}dx. $$
We put
$$ g_1^\lambda(s,y):=\frac{1}{\sqrt{2\pi}}\int_{\mathbb{R}}(\widetilde{h}^\lambda)'(y+\sqrt{1-s}\cdot x)e^{-\frac{1}{2}x^2}dx,\ (s,y)\in[0,1]\times\mathbb{R}. $$
Then $g_1^\lambda:[0,1]\times\mathbb{R}\rightarrow\mathbb{R}$ is continuous,  $g_1^\lambda(s,\cdot)\in C^\infty$ ($s\in[0,1]$), and the derivatives of all order w.r.t. $y$ of $g_1^\lambda$ are bounded over $[0,1]\times\mathbb{R}$. Similarly we define
$$ g_2^\lambda(s,y):=\frac{1}{\sqrt{2\pi}}\int_{\mathbb{R}}\widetilde{h}^\lambda(y+\sqrt{1-s}\cdot x)e^{-\frac{1}{2}x^2}dx,\ (s,y)\in[0,1]\times\mathbb{R}, $$
and, obviously, $E^Q[L^{\lambda}_{\varepsilon}\,|\,\mathcal{F}_s]=g_2^\lambda(s,B_s)$. Note that $g_2\ge\frac{\varepsilon}{\varepsilon+C_\varepsilon}$ (See Step 5).  We conclude that $\gamma_s^{\lambda,\varepsilon}= g^\lambda(s,B_s):= \frac{g_1^\lambda(s,B_s)}{g_2^\lambda(s,B_s)}$, $s\in[0,1]$, where $g:[0,1]\times\mathbb{R}\times\mathbb{R}\rightarrow\mathbb{R}$ is continuous, and $g(s,\cdot,\cdot)\in C^\infty(\mathbb{R}\times\mathbb{R})$ with derivatives of all order bounded over $[0,1]\times\mathbb{R}\times\mathbb{R}.$

In order to complete the proof of the proposition, we have still to approximate in a suitable way the process $\gamma^{\lambda,\varepsilon}$ by a smooth Wiener step process.

\noindent\textbf{Step 7}. For arbitrarily fixed $\varepsilon>0$
 we approximate $\gamma_s^{\lambda}:=\gamma_s^{\lambda,\varepsilon}=g^\lambda(s,B_s),\, s\in[0,1]$, introduced in the preceding step, by
$$\tilde{\gamma}^{\lambda,k}_s:=\sum_{i=1}^kg^\lambda(s_{i-1}^k,B_{s_{i-1}^k})I_{\Delta_{i,k}}(s)\Big(=\sum_{i=1}^kg^\lambda\big(s_{i-1}^k,B(\Delta_{1,k})+\cdots+B(\Delta_{i-1,k})\big)I_{\Delta_{i,k}}(s)\Big),$$
where, for $k\ge 1$, $s^k_i:=i/k,\, 0\le i\le k,$ and
$\Delta_{i,k}:=(s_{i-1}^k,s_i^k],\, 1\le i\le k.$ From the regularity properties of the function $g(s,\lambda,x)=g^\lambda(s,x),\, (s,\lambda,x)\in[0,1]\times\mathbb{R}\times\mathbb{R}$, stated in Step 6 we see that $\tilde{\gamma}^{\lambda,k}$ is a smooth Wiener step process. Moreover, as $g$ and its derivatives w.r.t. $(\lambda,x)$ are bounded over $[0,1]\times\mathbb{R}\times\mathbb{R}$, we check easily that for the Dol\'ean-Dade exponential
$$\mathcal{E}_1^{\lambda,k}=\exp\Big\{ \int_0^1\tilde{\gamma}^{\lambda,k}_sdB_s-\frac12 \int_0^1|\tilde{\gamma}^{\lambda,k}_s|^2ds\Big\},$$
it holds $(\mathcal{E}_1^{\lambda,k},\partial_\lambda\mathcal{E}_1^{\lambda,k})\rightarrow(L_\varepsilon^\lambda,\partial_\lambda L_\varepsilon^\lambda)$ in $L^2(Q)$ and $(\mathcal{E}_1^{{\lambda(\cdot)},k},\partial_\lambda\mathcal{E}_1^{ {\lambda(\cdot)},k})\rightarrow(L_\varepsilon^{\lambda(\cdot)},\partial_\lambda L_\varepsilon^{\lambda(\cdot)})$ in $L^2([0,1]\times\Omega,dsdQ)$, as $k\rightarrow+\infty.$

Finally, combining the Steps 1-7 we finish the proof of the proposition.
\end{proof}


\begin{thebibliography}{10}









\bibitem{ABC19} B. Acciaio, J. Backhoff-Veraguas, R. Carmona. Extended Mean Field Control Problems: Stochastic Maximum Principle and Transport Perspective. \emph{SIAM Journal on Control and Optimization}, 57(6), 3666-3693 (2019).

\bibitem{BCP18}     E. Bayraktar, A. Cosso, H. Pham. Randomized dynamic programming principle and Feynman-Kac representation for optimal control of McKean-Vlasov dynamics. Transactions of the American Mathematical Society. 370 (3), 2115-2160. 2018.

\bibitem{BFY15} A. Bensoussan, J. Frehse, P. Yam. The master equation in mean field theory, \emph{Journal de Math\'ematiques Pures et Appliqu\'ees}, 103(6), 1441-1474 (2015).

\bibitem{BFY17} A. Bensoussan, J. Frehse, P. Yam. On the interpretation of the master equation. Stochastic Processes and their Applications. 127 (7), 2093-2137, 2017.

\bibitem{B94} R. Buckdahn. Anticipative Girsanov transformations and Skorohod stochastic differential equations. \emph{Memories of the American Mathematical Society}, 111(533), 88 pages (1994).


\bibitem{BCL} R. Buckdahn, Y. Chen, J. Li. Partial derivative with respect to the measure and its application to general controlled mean-field systems. Preprint.

\bibitem{BDLP09} R. Buckdahn, B. Djehiche, J. Li, S. Peng. Mean-field backward stochastic differential equations: A limit approach. \emph{Annals of Probability}, 37(4), 1524-1565 (2009).

\bibitem{BDL11aD} R. Buckdahn, B. Djehiche, J. Li. A General Stochastic Maximum Principle for SDEs of Mean-field Type.\emph{ Applied Mathematics \& Optimization}, 64, 197-216 (2011).

\bibitem{BLM16aD} R. Buckdahn, J. Li, J. Ma. A Stochastic Maximum Principle for General Mean-Field Systems. \emph{Applied Math. \& Optim.}, 74 (3), 507-534 (2016).

\bibitem{BLM17} R. Buckdahn, J. Li, J. Ma. A mean-field stochastic control problem with partial observations. \emph{Annals of Applied Probability}, 27(5), 3201-3245 (2017).

\bibitem{BLPR17} R. Buckdahn, J. Li, S. Peng, C. Rainer. Mean-field stochastic differential equations and associated PDEs. \emph{Annals of Probability}, 45(2), 824-878 (2017).

\bibitem{C12} P. Cardaliaguet. Notes on Mean Field Games. (\emph{from P.-L. Lions' lectures at Coll\`{e}ge de France}) (2012).

\bibitem{C17} P. Cardaliaguet. The convergence problem in mean field games with a local coupling. \emph{Applied Mathematics  \&  Optimization}. 76 (1), 177-215, 2017.


\bibitem{HCM07} M. Huang, P. Caines, R. Malhame. Large-population cost-coupled LQG problems with nonuniform agents: individual-mass behavior and decentralized $\varepsilon$-Nash equilibria.  \emph{IEEE Trans. Automat. Control}.  52 (9), 1560-1571, 2007.


\bibitem{LL07} J.M. Lasry, P.L. Lions. Mean field games. \emph{Japanese Journal of Mathematics},  2(1), 229-260 (2007).

\bibitem{Li12} J. Li, Stochastic maximum principle in the mean-field controls.\emph{ Automatica}, 48. 366-373 (2011).

\bibitem{LM16} J. Li, H. Min. Weak solutions of mean-field stochastic differential equations and application to zero-sum stochastic differential games. \emph{Siam J. Control Optim}, 54(3), 1826-1858 (2016).



\bibitem{Lions} P.L. Lions. Cours au Coll\`{e}ge de France. www.college-de-france.fr.


\bibitem{PW18}  H. Pham, X. Wei. Bellman equation and viscosity solutions for mean-field stochastic control problem. \emph{ESAIM: Control, Optimisation and Calculus of Variations}. 24 (1), 437-461, 2018.

\end{thebibliography}
\end{document}